\setlist{nosep}
\setlist[enumerate,1]{label={\rm (\roman*)}}
\setlist[1]{labelindent=\parindent}
\newtheorem{theorem}{Theorem}[section]
\newtheorem{lemma}[theorem]{Lemma}
\newtheorem{cor}[theorem]{Corollary}
\newtheorem{prop}[theorem]{Proposition}
\newtheorem{definition}[theorem]{Definition}
\newtheorem{remark}[theorem]{Remark}
\renewcommand {\theequation}{\arabic{section}.\arabic{equation}}
\newcommand{\re}{\mathbb{R}}
\newcommand{\R}{\mathcal{R}}
\newcommand{\D}{\mathcal{D}}
\newcommand{\p}{\mathbb{P}}
\newcommand{\supp}{\mbox{supp}}
\newcommand{\e}{\mathbb{E}}
\newcommand{\F}{\mathcal{F}}
\newcommand{\Ss}{\mathcal{S}}
\newcommand{\ol}[1]{\overline{#1}}
\newcommand{\me}{\mathrm{e}}
\newcommand{\td}{\mathrm{d}}
\newcommand{\T}{\mathcal{T}}
\newcommand{\Ij}{\mathrm{I}_j}
\newcommand{\ovl}{\overline}
\newcommand{\Fc}{\mathcal{F}}
\def \1{{\bf 1}}
\newcommand{\pmu}[1]{U^{\mu_{#1}}}
\newcommand{\bmu}{\pmb{\mu}}
\newcommand{\Eps}[2]{\e^{#1}\left[#2\right]}
\newcommand{\Epsb}[3]{\e^{#1}_{#2}\!\left[#3\right]}
\newcommand{\Epsba}[3]{\e^{#1}_{#2}\!\left|#3\right|}
\newcommand{\Epsa}[2]{\e^{#1}\left|#2\right|}
\newcommand{\pps}[2]{\p^{#1}\left[#2\right]}
\newcommand{\eps}{\varepsilon}
\newcommand{\half}{\frac{1}{2}}
\newcommand{\Lc}{\mathcal{L}}
\newcommand{\Tx}{T_{\xi}}
\newcommand{\sx}{\sigma^\xi}
\newcommand{\sxb}{\sigma^{\xi^\beta}}
\newcommand{\srb}{\sigma_{\R^\beta}}
\newcommand{\ax}{{\alpha^\xi}}
\newcommand{\xb}{{\xi^\beta}}
\newcommand{\sib}{\sigma^\beta}
\newcommand{\tb}{\bar{t}}
\newcommand{\dU}{\delta U}
\newcommand{\du}{\delta u}
\newcommand{\I}{\mathcal{I}}
\newcommand{\bI}{{\bar\I}}
\newcommand{\iI}{\I^{\circ}}
\begin{document}

\title{\sc The Root solution to the multi--marginal embedding problem: an optimal stopping and time--reversal approach\thanks{We are grateful to many who have commented on this research project. In particular, we thank the 
participants of the BIRS Workshop \emph{Mathematical Finance: Arbitrage and Portfolio Optimization} in May 2014 and HIM Workshop \emph{Optimal Transport and Stochastics} in March 2015 for their helpful comments and remarks.}
}
\author{
Alexander M.~G.~Cox\thanks{University of Bath, email:
  \href{mailto:A.M.G.Cox@bath.ac.uk}{\nolinkurl{A.M.G.Cox@bath.ac.uk}}, web: \url{http://www.maths.bath.ac.uk/\~mapamgc/}}
\and Jan Ob\l{}\'{o}j\thanks{University of Oxford, email: \href{mailto:jan.obloj@maths.ox.ac.uk}{\nolinkurl{jan.obloj@maths.ox.ac.uk}}, web: \url{http://www.maths.ox.ac.uk/people/jan.obloj}. The research has received funding from the European Research Council under the European Union's Seventh Framework Programme (FP7/2007-2013) / ERC grant agreement no. 335421. The author is also grateful to the Oxford-Man Institute of Quantitative Finance and St John's College in Oxford for their support.}
 \and Nizar Touzi\thanks{Ecole Polytechnique Paris \href{mailto:nizar.touzi@polytechnique.edu}{\nolinkurl{nizar.touzi@polytechnique.edu}}. The research has received funding from the European Research Council under the European Union's Seventh Framework Programme (FP7/2007-2013) / ERC grant agreement no. 321111. The author also gratefully acknowledges the financial support from the Chair {\it Financial Risks} of the {\it Risk Foundation} sponsored by Soci\'et\'e G\'en\'erale, and the Chair {\it Finance and Sustainable Development} sponsored by EDF and CA-CIB.}}
%\date{This version: 6 September 2016; First version: 12 June 2013}

\maketitle

\begin{abstract} 
We provide a complete characterisation of the Root solution to the Skorokhod embedding problem (SEP) by means of an optimal stopping formulation. Our methods are purely probabilistic and the analysis relies on a tailored time-reversal argument. This approach allows us to address the long-standing question of a multiple marginals extension of the Root solution of the SEP. Our main result establishes a complete solution to the $n$--marginal SEP using first hitting times of barrier sets by the time-space process. The barriers are characterised by means of a recursive sequence of optimal stopping problems. Moreover, we prove that our solution enjoys a global optimality property extending the one-marginal Root case. Our results hold for general, one-dimensional, martingale diffusions.  
\end{abstract}

\section{Introduction}

The Skorokhod embedding problem (SEP) for Brownian motion $(B_t)_{t \ge 0}$ consists of specifying a stopping time $\sigma$ such that $B_\sigma$ is distributed according to a given probability measure $\mu$ on $\re$. It has been an active field of study in probability since the original paper by \cite{Skorokhod:65}, see \cite{Obloj:04b} for an account. One of the most natural ideas for a solution is to consider $\sigma$ as the first hitting time of some \emph{shape} in time--space. This was carried out in an elegant paper of \cite{Root}. Root showed that for any centred and square integrable distribution $\mu$ there exists a \emph{barrier} $\R$, i.e. a subset of $\re_+\times\re$ such that $(t,x)\in \R$ implies $(s,x)\in \R$ for all $s\geq t$, for which $B_{\sigma_{\R}}\sim \mu$, $\sigma_{\R}=\inf\{t: (t,B_t)\in \R\}$. The barrier is (essentially) unique, as argued by \cite{Loynes}.

Root's solution enjoys a fundamental optimality property, established by \cite{Rost}, that it minimises the variance of the stopping time among all solutions to the SEP. More generally, $\e f(\sigma_{\R})\leq \e f(\sigma)$ for any convex function $f\geq 0$ and any stopping time $\sigma$ with $B_\sigma\sim B_{\sigma_{\R}}$. 
This led to a recent revival of interest in this construction in the mathematical finance literature, where optimal solutions to SEP are linked to robust pricing and hedging of derivatives, see \cite{Hobson:98,Hobson:11}. More precisely, optimality of the Root solution translates into lower bounds on prices of options written on the realised volatility.

In recent work \cite{CoxWang:11} show that the barrier $\R$ may be written as the unique solution to a Free Boundary Problem (FBP) or, more generally, to a Variational Inequality (VI). This yields directly its representation by means of an optimal stopping problem. This observation was the starting point for our study here. Subsequently, \cite{GassiatOberhauserdosReis} used analytic methods based on the theory of viscosity solutions to extend Root's existence result to the case of general, integrable starting and target measures satisfying the convex ordering condition. Using methods from optimal transport, \cite{BeiglbockCoxHuesmann} have also recently proved the existence and optimality of Root solutions for one-dimensional Feller processes and, under suitable assumptions on the target measure, for Brownian motion in higher dimensions.

The first contribution of our paper is to show that one can obtain the barrier $\R$ directly from the optimal stopping formulation, and to prove the embedding property using purely probabilistic methods. This also allows us to determine a number of interesting properties of $\R$ by means of a time-reversal technique. Our results will hold for a general one-dimensional diffusion.

Beyond the conceptual interest in deriving the Root solution from the optimal stopping formulation, the new perspective enables us to address the long--standing question of extending the Root solution of the Skorokhod embedding problem to the multiple-marginals case, i.e.~given a non-decreasing (in convex order) family of $n$ probability measures $(\mu_0,\ldots,\mu_n)$ on $\re$ with finite first moment, and a diffusion $X$ started from the measure $\mu_0$, find stopping times $\sigma_1\le\ldots\le\sigma_n$ such that $X_{\sigma_i}\sim\mu_i$, and $X_{.\wedge\sigma_n}$ is uniformly integrable. Our second contribution, and the main result of the paper, provides a complete characterisation of such a solution to the SEP which extends the Root solution in the sense that it enjoys the following two properties:
\begin{itemize}
\item first, the stopping times are defined as hitting times of a sequence of barriers, which are completely characterized by means of a recursive sequence of optimal stopping problems;
\item second, similar to the one-marginal case, we prove that our solution of the multiple marginal SEP minimizes the expectation of any non-decreasing convex function of $\rho_n$ among all families of stopping times $\rho_1\le\ldots\le\rho_n$, such that $X_{\rho_i} \sim \mu_i$. 
\end{itemize}
%This optimality property of our solution is accompanied by a quasi-explicit pathwise inequality which has the interpretation of an optimal model-free subhedging strategy in financial mathematics. The first paper relating model-free finance and the SEP was \cite{Hobson:98}, and since this paper there has been much work examining applications of the SEP to model-free finance; of particular interest in this setting are solutions to the SEP which also exhibit a certain optimality property. 
It is well known that solutions to the multiple marginal SEP exist if and only if the measures are in convex order, however finding optimal solutions to the multiple marginal SEP is more difficult.  While many classical constructions of solutions to embedding problems can, in special cases, be ordered (see \cite{MadanYor}), in general the ordering condition is not satisfied except under strong conditions on the measures. The first paper to produce optimal solutions to the multiple marginal SEP was \cite{BHR}, who extended the single marginal construction of \cite{AzemaYor} to the case where one intermediate marginal is specified. More recently, \cite{OblojSpoida} and \cite{HLOST} extended these results to give an optimal construction for an arbitrary sequence of $n$ marginals satisfying a mild technical condition.

There are also a number of papers which make explicit connections between optimal stopping problems and solutions to the SEP, including \cite{Peskir:99}, \cite{Obloj:04} and \cite{Cox:2008aa}. In these papers, the key observation is that the optimal solution to the SEP can be closely connected to a particular optimal stopping problem; in all these papers, the \emph{same} stopping time gives rise to both the optimal solution to the SEP, and the optimal solution to a related optimal stopping problem. In this paper, we will see that the key connection is not that the same stopping time solves both the SEP and a related optimal stopping problem, but rather that there is a time-reversed optimal stopping problem which has the same stopping region as the SEP, and moreover, the value function of the optimal stopping problem has a natural interpretation in the SEP. The first paper we are aware of to exploit this connection is \cite{McConnell}, who works in the setting of the solution of \cite{Rost:71} and \cite{Chacon} to the SEP (see also \cite{CoxWang:13,GassiatOberhauserdosReis}), and uses analytic methods to show that Rost's solution to the SEP has a corresponding optimal stopping interpretation. More recently\footnote{Indeed, we were made aware of this paper only in the final stages of completing this work.} \cite{DeAngelis:15} has provided a probabilistic approach to understanding McConnell's connection, using a careful analysis of the differentiability of the value function to deduce the embedding properties of the SEP; both the papers of McConnell and De~Angelis also require some regularity assumptions on the underlying measures in order to establish their results. In contrast, we consider the Root solution to the SEP. As noted above, a purely analytic connection between Root's solutions to the SEP and a related (time-reversed) optimal stopping problem was observed in \cite{CoxWang:11}. In this paper, we are not only able to establish the embedding problems based on properties of the related optimal stopping problem, but we are also able to use our methods to prove new results (in this case, the extension to multiple marginal solutions, and characterisation of the corresponding stopping regions), without requiring any assumptions on the measures which we embed (beyond the usual convex ordering condition).

The paper is organized as follows. Section \ref{sect:Root} formulates the multiple marginals Skorokhod embedding problem, reviews the Root solution together with the corresponding variational formulation, and states our optimal stopping characterization of the Root barrier. In Section \ref{sect:multipleRoot}, we report the main characterisation of the multiple marginal solution of the SEP, and we derive the corresponding optimality property. The rest of the paper is devoted to the proof of the main results. In Section \ref{sect:induction}, we introduce some important definitions relating to potentials, state the main technical results, and use these to prove our main result regarding the embedding properties. The connection with optimal stopping is examined in Section \ref{sect:stoppedpotential}. Given this preparation, we report the proof of the main result in Section \ref{sect:finiteatoms} in the case of locally finitely supported measures. This is obtained by means of a time reversal argument. Finally, we complete the proof in the case of general measures in Section \ref{sect:approxatom} by a delicate limiting procedure. 

\vspace{5mm}

\noindent {\bf Notation and Standing Assumptions:} In the following, we consider a regular, time-homogenous, martingale diffusion taking values on an interval $\I$, defined on a filtered probability space $(\Omega, \F, (\F_t), \p)$ satisfying the usual hypotheses. For $(t,x) \in \re_+ \times \re$, we write $\e^{t,x}$ for expectations under the measure for which the diffusion departs from $x$ at time $t$. We also write $\e^{x}=\e^{0,x}$. We use both $(X_t)$ and $(Y_t)$ to denote the diffusion process. While $X$ and $Y$ denote the same object, the double notation allows us to distinguish between two interpretations: with a fixed reference time-space domain $\re_+ \times \re$, we think of $(X_t)$ as starting in $(t,x)$ and \emph{running forward in time} and of $(Y_t)$ as starting in $(t,x)$ and \emph{running backwards in time}. For a distribution $\nu$ on $\re$, we interpret $\e^{\nu}[.] = \int  \e^{x}[.]\nu(dx)$.

We suppose that the diffusion coefficient is $\eta(x)$, so $d\langle X\rangle_t = \eta(X_t)^2 dt$, where $\eta$ is locally Lipschitz, $|\eta(x)|^2 \le C_\eta (1+|x|^2)$, for some constant $C_\eta$, and strictly positive on $\I^\circ$, where we write $\I^\circ = (a_{\I}, b_{\I})$, and without loss of generality, assume that $0 \in \I^{\circ}$; in addition, we use $\bI$ for the closure of $\I$, and $\partial \bI$ for the boundary, so $\partial \bI = \{a_\I, b_{\I}\}$. We assume that the corresponding endpoints are either absorbing (in which case they are in $\I$), or inaccessible (in which case, if for example $b_{\I}$ is inaccessible and finite, then $\p(X_t \to b_{\I} \text{ as } t \to \infty) >0$). The measures we wish to embed will be assumed to be supported on $\bar\I$, and in the case where $\I \neq \bar\I$, it may be possible to embed mass at $\partial \bI$ by taking a stopping time which takes the value $\infty$. We define $\Ss := [0,\infty] \times \bI$. We note also\footnote{See the proof of Lemma \ref{lem:vxi-immediate} below for a suitable argument.} that as a consequence of the assumption on $\eta$, we have $\Eps{x}{X_t^2} < \infty$, and we further write $m_{\mu_0}(t) := \Epsa{\mu_0}{X_t}$ for suitable measures $\mu_0$.

%, and denote the (possibly random) starting point of the process by $\Xo$. 
We will also frequently want to restart the space-time process, given some stopped distribution in both time and space, and we will write $\xi$ for a general probability measure on $\Ss$, with typically $\xi \sim (\sigma,X_{\sigma})$ for some stopping time $\sigma$. With this notation, we have, $\Eps{\xi}{A} = \int \Eps{t,x}{A}\xi(dt,dx)$ and we denote $(\Tx,X_{\Tx})$ the random starting point, which then has law $\xi$. Since $\xi$ may put mass on $\bI \setminus \I$, we interpret the process started at such a point as the constant process. For each of these processes, $L_t^x$ denotes the (semimartingale) local time at $x$ corresponding to the process $X_t$, with the convention that $L_{t}^x = 0$ for $t \le \Tx$. In addition, given a barrier $\R$, we define the corresponding hitting time of $\R$ by $X$ under $\p^\xi$ by:
\begin{equation*}
  \sigma_\R = \inf \{ t \ge \Tx: (t,X_{t}) \in \R\}.
\end{equation*}
Similarly, given a stopping time $\sigma_0$ we write
\begin{equation*}
  \sigma_{\R}(\sigma_0) = \inf \{ t \ge \sigma_0: (t,X_{t}) \in \R\}.
\end{equation*}

Finally, we observe that, as a consequence of the (local) Lipschitz property of $\eta$, we know there exists a continuous transition density, $p: (0,\infty) \times \I^{\circ} \times \I^{\circ}$, so that
\begin{equation*}
  \e^{x}\left[ f(X_t)\right] = \int p(t,x,y) f(y) \, dy,
\end{equation*}
whenever $f$ is supported in $\I$ (see e.g. \cite[Theorem~V.50.11]{Rogers:2000aa}). We observe that we then have the following useful identities for the local time (see e.g. \cite[Theorem~3.7.1]{KaratzasShreve}):
\begin{equation*}
  \int_0^t f(X_s) \eta^2(X_s)\, ds = \int f(a) L_t^a \, da
\end{equation*}
and 
\begin{equation}\label{eq:1}
  \e^y[L_t^x] = \eta(x)^2 \int_0^t p(s,y,x) \, ds.
\end{equation}

\section{The Root solution of the Skorokhod embedding problem}
\label{sect:Root}
\setcounter{equation}{0}

\subsection{Definitions}

Throughout this paper, we consider a sequence of centred
probability measures $\bmu_n:=(\mu_i)_{i=0,\dots,n}$ on $\bI$:
 \begin{eqnarray}\label{eq:2a}
  \quad \int_{\I} |x|\mu_i(dx)<\infty,
  &\mbox{and}&
  \int_{\I} x \mu_i(dx)=0,
  ~~i=0,\ldots,n.
  \end{eqnarray}
We similarly denote $\bmu_k = (\mu_0,\mu_1, \dots, \mu_k)$ for all $k\le n$. We say that $\bmu_k$ is in convex order, and we denote $\mu_0\preceq_{\text{\rm cx}}\ldots\preceq_{\text{\rm cx}}\mu_k$, if
  \begin{eqnarray}\label{eq:2b}
  \int_{\re} c(x) \mu_{i-1}(dx) 
  \le
  \int_{\re} c(x) \mu_{i}(dx),
  i=1,\ldots,k
  &\mbox{for all convex functions}&
  c.
\end{eqnarray}
The lower and the upper bounds of the support of $\mu_k$ \emph{relative to $\mu_{k-1}$} are denoted by
 \begin{eqnarray}\label{eq:ellDefn}
 \ell_k
 :=
 \inf\big\{x:\mu_k\big[(-\infty,x)\big] \neq \mu_{k-1}\big[(-\infty,x)\big]\big\}
 &\mbox{and}&
 r_k:=\sup\big\{x:\mu_k\big[(x,\infty)\big]\neq\mu_{k-1}\big[(x,\infty)\big]\big\}.\quad
 \end{eqnarray}
 We exclude the case where $\mu_k = \mu_{k-1}$ as a trivial special case,
 and so we always have $\ell_k < r_k$ for all $k=1,\ldots,n$, as a consequence of the convex ordering.  The potential of a probability measure $\mu$ is defined by
 \begin{eqnarray}
 U^{\mu}(x)
 &:=&
 - \int_{\re} |x-y|\mu(\td y);
 ~~x\in\re,
 \end{eqnarray}
see \cite{Chacon:77}. For centred measures $\bmu_n$ in convex order, we have 
 \begin{eqnarray}\label{UmuleU0}
 U^{\mu_{k}} \le U^{\mu_{k-1}}
 &\mbox{and}&
 U^{\mu_k}=U^{\mu_{k-1}}~~\mbox{on}~~(\ell_k,r_k)^c,
 ~~\mbox{for all}~~k=1,\ldots,n.
 \end{eqnarray}

Recall that $(X_t)_{t \in \re_+}$ is a martingale diffusion. A stopping time $\sigma$ (which may take the value $\infty$ with positive probability) is said to be uniformly integrable (UI) if the process $(X_{t \wedge \sigma})_{t \ge 0}$ is uniformly integrable under $\p^{\mu_0}$. We denote by $\T$ the collection of all UI stopping times.

The classical Skorokhod embedding problem with starting measure $\mu_0$ and target measure $\mu_1$ is:
 \begin{eqnarray}\label{SEPmu}
 {\rm SEP}(\bmu_1):\quad\quad \textrm{find }\sigma\in\T\textrm{ such that }
 X_\sigma \sim \mu_1\textrm{ under }\p^{\mu_0}.\hspace*{2.9cm}
 \end{eqnarray}

 We consider the problem with multiple marginals: 
 \begin{eqnarray}\label{eq:nSEP}
 {\rm SEP}(\bmu_n):\quad\quad
 \textrm{find } 0\le\sigma_1\ldots\le\sigma_n\in\T\textrm{ such that }
 X_{\sigma_k} \sim \mu_k,~k=1,\ldots,n \textrm{ under }\p^{\mu_0}.
 \end{eqnarray}
 In this paper, our interest is in a generalisation of the \cite{Root} solution of the Skorokhod embedding problem so that each stopping time $\sigma_k$ is the first hitting time, after $\sigma_{k-1}$, by $(t,X_t)_{t\ge 0}$ of some subset $\R$ in $\Ss$. Further, and crucially, we require that $\R$ is a barrier in the following sense:
 \begin{definition}\label{def:barrier}
A set $\R \subset \Ss$ is called a \emph{barrier} if
\\
$\bullet$ $\R$ is closed;
\\
$\bullet$ if $(t,x)\in \R$ then $(s,x)\in \R$ for all $s\geq t$;
\\
$\bullet$ if $x \in \{a_{\I}, b_{\I}\}$ is finite, $(0,x) \in \R$.
\\
Given a barrier $\R$, for $x\in \bI$, we define the corresponding barrier function:
\begin{eqnarray}\label{eq:tbar_def}
\ovl t_\R(x)
&:=&
\inf\{t\geq 0: (t,x)\in \R\}\in [0,\infty].
\end{eqnarray} 
\end{definition}

Since $\R$ is closed it follows, as observed by \cite{Root} and \cite{Loynes}, that $\ovl t_\R (\cdot)$ is lower semi--continuous on $\I$. Also, from the second property, we see that a barrier is the epigraph of the corresponding barrier function in the $(t,x)$-plane:
\begin{eqnarray*}
\R
&=&
\big\{(t,x)\in\re_+\times\I:~t\ge\ovl t_\R(x)\big\}.
\end{eqnarray*}

\begin{definition}\label{def:regularbarrier}

{\rm (i)} We say that a barrier is \emph{regular} if $\{x\in \iI: \ovl t_\R(x)>0\}$ is an open interval containing zero. 
\\
{\rm (ii)} For a probability measure $\xi = \xi(dt,dx)$ on $\Ss$, we say that a barrier is \emph{$\xi$-regular} if 
\begin{eqnarray*}
  \p^\xi\big[\sigma_\R = \sigma_{\R^{(t,x)}}\big] <1
  &\mbox{for all}&
  (t,x) \not \in \R,
  ~~\mbox{where}~~
  \R^{(t,x)} = \R \cup \left( [t,\infty) \times \{x\}\right),
\end{eqnarray*}
i.e. the barrier cannot be enlarged without altering the stopping distribution of the space-time diffusion started with law $\xi$ and run to the hitting of $\R$. 
\end{definition}

Observe that a regular barrier is a $\delta_{(0,0)}$-regular barrier. We have the following characterisation:
\begin{remark}\label{rem:regular lt}
A barrier $\R$ is $\xi$-regular  if and only if $\Eps{\xi}{L_{t\wedge \sigma_\R}^x} < \Eps{\xi}{L_{\sigma_\R}^x}$ for all $(t,x) \not\in \R$. 
\end{remark}
\begin{lemma}\label{lem:unique}
Let $\xi$ be a probability measure on $\Ss$ and $\R$ a barrier such that $\inf_{x \in \I} \ovl t_\R(x)<\infty$. Then $\sigma_\R<\infty$ or $\lim_{t \to \infty} X_t \in \{a_{\I},b_{\I}\}$ $\p^\xi$-a.s. Further, if $\R$ is not $\xi$-regular then there exists a $\xi$-regular barrier $\tilde\R\supseteq\R$ such that $X_{\sigma_\R}\sim X_{\sigma_{\tilde \R}}$ $\p^\xi$-a.s.
\end{lemma}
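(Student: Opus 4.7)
The plan is to dispose of the two assertions separately. For the first, the hypothesis $\inf_{x\in\I}\ovl t_\R(x)<\infty$ combined with lower semi-continuity of $\ovl t_\R$ provides some $x_0\in\iI$ and $t_0<\infty$ with $(t_0,x_0)\in\R$, and the ray property of a barrier then gives $[t_0,\infty)\times\{x_0\}\subseteq\R$, so $\sigma_\R\le\inf\{t\ge t_0:X_t=x_0\}$. I would then invoke the standard dichotomy for the continuous martingale $(X_t)$ with $\eta>0$ on $\iI$: either $\langle X\rangle_\infty<\infty$ and $X$ converges a.s.\ to a finite limit, which by the non--degeneracy of $\eta$ must lie in $\{a_\I,b_\I\}$ (convergence to an interior point would force $\langle X\rangle_t$ to grow indefinitely, a contradiction); or $\langle X\rangle_\infty=\infty$, in which case Dambis--Dubins--Schwarz writes $X$ as a time-changed Brownian motion, which is recurrent in $\iI$, returns to $x_0$ at arbitrarily large times, and hence $\sigma_\R<\infty$.

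For the second assertion, I would use a Loynes-type saturation of $\R$ with all addable rays. Define
\[
\ovl t_*(x):=\inf\bigl\{t\ge 0:\p^\xi[\sigma_\R=\sigma_{\R^{(t,x)}}]=1\bigr\},\qquad x\in\I,
\]
with the convention $\inf\emptyset=+\infty$, take $\ovl t_{\tilde\R}$ to be its lower semi-continuous envelope, and set $\tilde\R:=\{(t,x):t\ge\ovl t_{\tilde\R}(x)\}$. Any $(t,x)\in\R$ is trivially addable, so $\ovl t_*\le\ovl t_\R$ and hence $\tilde\R\supseteq\R$; closedness and the ray property of $\tilde\R$ are immediate from the construction via an l.s.c.\ envelope, so $\tilde\R$ is a barrier. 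Granted the embedding step below, the $\xi$-regularity of $\tilde\R$ will follow from the maximality of the construction: if $(t,x)\notin\tilde\R$ were to satisfy $\p^\xi[\sigma_{\tilde\R}=\sigma_{\tilde\R^{(t,x)}}]=1$, then since $\sigma_{\tilde\R}=\sigma_\R$ $\p^\xi$-a.s., $(t,x)$ would also be addable to $\R$, forcing $\ovl t_*(x)\le t$ and contradicting $(t,x)\notin\tilde\R$.

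The main obstacle is establishing $X_{\sigma_\R}\sim X_{\sigma_{\tilde\R}}$ under $\p^\xi$. Finite unions of addable rays remain addable because $\sigma_{\R\cup A\cup B}=\sigma_{\R\cup A}\wedge\sigma_{\R\cup B}$, and this extends to countable unions by monotone convergence; but $\tilde\R\setminus\R$ is an uncountable union of rays indexed by $x\in\I$, so one cannot directly union null events. My approach is to exploit Remark~\ref{rem:regular lt}: addability of $(t,x)$ is equivalent to $\Eps{\xi}{L^x_{t\wedge\sigma_\R}}=\Eps{\xi}{L^x_{\sigma_\R}}$, so $\ovl t_*(x)$ is precisely the smallest $t$ at which the nondecreasing, continuous map $t\mapsto\Eps{\xi}{L^x_{t\wedge\sigma_\R}}$ saturates. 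Measurability of this map in $x$ together with a separability argument lets me reduce the uncountable union to a countable dense subfamily whose closure recovers $\tilde\R$, and the resulting countable union is addable by the previous step. Combined with the occupation time identity $\int_0^t f(X_s)\eta^2(X_s)\,ds=\int f(a) L^a_t\,da$, the pointwise-in-$x$ local-time equality $\Eps{\xi}{L^x_{\sigma_\R}}=\Eps{\xi}{L^x_{\sigma_{\tilde\R}}}$ propagates to $\Eps{\xi}{\sigma_\R-\sigma_{\tilde\R}}=0$ (after a suitable truncation to handle possibly infinite stopping times), whence $\sigma_\R=\sigma_{\tilde\R}$ $\p^\xi$-a.s., which is strictly stronger than the claimed distributional identity and closes the circularity in the regularity argument.
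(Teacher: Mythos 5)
Your first assertion is handled correctly; the Dambis--Dubins--Schwarz dichotomy on $\langle X\rangle_\infty$ is a mild recasting of the paper's case split on whether $\I=\re$, and both arguments are sound. For the second assertion you depart substantially from the paper, which disposes of it in three lines by invoking Loynes' pairwise-union result together with the existence of a maximal barrier having the prescribed stopping distribution. Your attempt to \emph{construct} that maximal barrier explicitly through the local-time criterion of Remark~\ref{rem:regular lt}, and then to certify it by a countable-union argument, is a genuinely different (and more informative) route, but it has a gap at precisely the step you flag as ``the main obstacle''.

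The gap is in the claim that a countable dense family of rays has closure equal to $\tilde\R$, and that this suffices. Even granting joint continuity of $f(t,x):=\Eps{\xi}{L^x_{\sigma_\R}}-\Eps{\xi}{L^x_{t\wedge\sigma_\R}}$ (which would make $\ovl t_*$ already lsc and the envelope step redundant), an lsc function on $\I$ can still have strict downward jumps $\ovl t_*(x)<\liminf_{y\to x}\ovl t_*(y)$; if such an $x$ is not in $D$, the closure of $\R\cup\bigcup_{y\in D}[\ovl t_*(y),\infty)\times\{y\}$ omits a portion of the ray at $x$, so the closure does not recover $\tilde\R$. More seriously, even if one repairs this by adjoining the (at most countably many) jump points to $D$, set-equality $\overline{\R_D}=\tilde\R$ does not deliver equality of hitting times: for a continuous process, the first-entry time of the countable union $\R_D$ can be strictly larger than that of its closure with positive $\p^\xi$-probability, since a path may reach a limit ray without ever being on a ray of $\R_D$ at the relevant instant. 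Your monotone-convergence step therefore controls $\sigma_{\R_D}$ and not $\sigma_{\tilde\R}$, and the key input $\Eps{\xi}{L^x_{\sigma_\R}}=\Eps{\xi}{L^x_{\sigma_{\tilde\R}}}$ for all $x$, on which the otherwise-correct occupation-time conclusion rests, remains unproved. If you want to pursue the constructive route you need an additional argument bridging $\R_D$ and $\overline{\R_D}$, for instance showing directly that $\Eps{\xi}{L^x_{\sigma_{\overline{\R_D}}}}=\Eps{\xi}{L^x_{\sigma_{\R_D}}}$; as it stands, the circularity you were trying to close remains open.
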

\begin{proof}
For some $x_0\in \I$, we have $\ovl t_\R(x_0)<\infty$ and  $\{(t,x): t\geq \ovl t_\R(x_0)\}\subset \R$.
If $\I = \re$ then $\limsup_t X_t = \infty$ and $\liminf X_t = -\infty$ and it is clear that $\sigma_\R<\infty$ $\p^\xi$-a.s. Otherwise $\lim_{t \to \infty} X_t \in \{a_{\I}, b_{\I}\}$ . If $\R$ is not $\xi$-regular then by definition the set of all barriers $\tilde \R$ for which $X_{\sigma_\R}\sim X_{\sigma_{\tilde \R}}$ $\p^\xi$-a.s. is not a singleton. Then for any two such barriers $\tilde \R_1, \tilde \R_2$ their union is also such a barrier, as shown by  \cite{Loynes}. It follows that there exists a minimal such barrier with respect to the inclusion which then necessarily has to be $\xi$-regular.
\end{proof}
It follows that, without loss of generality, we may restrict our attention to $\xi$-regular barriers. Henceforth,
whenever a barrier is given it is assumed that it is a $\xi$-regular barrier, where the measure $\xi$ will be clear from the context.

\subsection{Root's solution and its PDE characterisation}

The main result of \cite{Root} is the following.

\begin{theorem} [\cite{Root}]\label{thm:Root}
Let $\mu_0=\delta_0$, $\eta(x) \equiv 1$, and $\mu_1$ be a centred probability measure on $\re$ with a finite second moment. Then there exists a barrier $\R^*$ such that $\sigma_{\R^*}$ is a solution of {\rm SEP}$(\bmu_1)$.
\end{theorem}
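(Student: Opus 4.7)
The plan is to construct the Root barrier $\R^*$ as the stopping region of a time-reversed optimal stopping problem, following the philosophy announced in the introduction rather than Root's original compactness argument.

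First, I would introduce a value function $u:[0,\infty)\times\re\to\re$ of the form
\begin{equation*}
u(t,x) := \sup_{0\le\tau\le t}\e^{x}\bigl[G(B_\tau)\bigr],
\end{equation*}
with $G$ chosen so that the obstacle coincides with the potential gap $U^{\mu_0}-U^{\mu_1}\ge 0$ and $u$ is the Snell envelope above this obstacle. Since $\tau=0$ is admissible one has $u(t,x)\ge U^{\mu_0}(x)-U^{\mu_1}(x)$, and since the family of admissible stopping times grows with $t$, the map $t\mapsto u(t,x)$ is non-decreasing. General optimal stopping theory then identifies the stopping set
\begin{equation*}
\R^* := \bigl\{(t,x):u(t,x)=U^{\mu_0}(x)-U^{\mu_1}(x)\bigr\}
\end{equation*}
as a closed, time-monotone subset of $\re_+\times\re$, i.e.\ a barrier in the sense of Definition \ref{def:barrier}.

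Next, set $\sigma^*:=\sigma_{\R^*}$. Uniform integrability of $(B_{t\wedge\sigma^*})$ under $\p^{\mu_0}$ is immediate from the second-moment assumption on $\mu_1$ together with Doob's $L^2$ inequality, so optional stopping applied to Tanaka's formula for $|B_\cdot-a|$ reduces the embedding $B_{\sigma^*}\sim\mu_1$ to the local-time identity
\begin{equation*}
\e^{\mu_0}\bigl[L^{a}_{\sigma^*}\bigr] = U^{\mu_0}(a)-U^{\mu_1}(a),\qquad a\in\re.
\end{equation*}
The key step is to read this identity off the value function $u$: apply a Tanaka--Meyer expansion to $u(t,B_t)$, use the Dynkin martingale property of $u$ on the continuation set $\R^{*c}$, and use the fact that on $\R^*$ the function $u$ coincides with its obstacle. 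A time-reversal argument then converts the backward optimal stopping problem into a forward hitting problem and identifies the distributional Laplacian of $u(t,\cdot)$ restricted to $\R^*$ with the signed measure $\mu_1-\mu_0$, yielding the required identity after integration against local time in $a$.

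The main obstacle is precisely this local-time identification. With only a finite second moment assumed on $\mu_1$, the obstacle $U^{\mu_1}$ is merely concave with a possibly singular distributional second derivative, so $u$ need not be $C^{1,2}$ and the Tanaka--Meyer calculation has to be justified in a viscosity/distributional framework. The robust route is to first prove the embedding for a finitely supported $\mu_1$, where the obstacle is piecewise linear and the local-time identity becomes a finite sum of atom conditions, and then to approximate a general $\mu_1$ in convex order by such measures. The limiting step will require lower semi-continuity of barrier functions, tightness of the hitting times (from the second-moment bound), and continuity of $\nu\mapsto U^{\nu}$ under convergence in convex order; showing that the hitting times of the approximating barriers converge to $\sigma_{\R^*}$ itself rather than to the hitting time of some strictly larger barrier is the delicate point where the time-reversal machinery of the paper should earn its keep.
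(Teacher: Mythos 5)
The central step in your proposal---defining the barrier as the stopping region of a time-reversed optimal stopping problem---is indeed the paper's route, but your specific formulation of the value function has the monotonicity in $t$ backwards, and this is fatal. You set $u(t,x) := \sup_{0 \le \tau \le t} \e^x[G(B_\tau)]$ with a single time-independent obstacle $G = U^{\mu_0} - U^{\mu_1}$. Since the admissible class of stopping times grows with $t$, this $u$ is non-decreasing in $t$ and satisfies $u \ge G$; consequently the stopping set $\{u = G\}$ contains the entire line $\{t=0\}$ and is a \emph{reversed} barrier (if $(t,x)$ lies in it, so does $(s,x)$ for every $s \le t$) rather than a barrier in the sense of Definition~\ref{def:barrier}. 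Your claim that $\R^*$ is ``a barrier in the sense of Definition~\ref{def:barrier}'' is therefore false. What the paper actually uses is a two-part reward, see~\eqref{eq:opt_stop}: $J^1_{t,x}(\tau) = \e^x\big[U^{\mu_0}(Y_\tau) + (U^{\mu_1}-U^{\mu_0})(Y_\tau)\1_{\{\tau<t\}}\big]$, where the terminal reward $U^{\mu_0}(Y_t)$ at $\tau=t$ differs from the early-stopping reward $U^{\mu_1}(Y_\tau)$. It is precisely this asymmetry, together with the supermartingale structure established in Lemma~\ref{lem:vxi supermart}, that makes $u^1$ \emph{non-increasing} in $t$ (Lemma~\ref{lem:opt-stop-prop}) and hence the stopping region $\R^1 = \{u^1 = U^{\mu_1}\}$ a genuine Root barrier. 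A one-function Snell envelope $\sup_{\tau\le t}\e^x[G(B_\tau)]$ cannot reproduce this; as written it would at best produce a Rost-type region.

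Beyond the sign issue, the proof engine you gesture at is close in spirit to what the paper builds, but the load-bearing identity is missing from your sketch. The paper proves that $u^1(t,x) = -\e^{\mu_0}|X_{t\wedge\sigma_{\R^1}}-x|$, the potential of the stopped process, and then derives the embedding and UI from this via Lemma~\ref{lem:u-beta-second-prop}. The key technical ingredient in establishing this identity for finitely supported measures is the local-time symmetry $\e^x[L^y_{t\wedge H_{a,b}}] = \e^y[L^x_{t\wedge H_{a,b}}]$ (Lemma~\ref{lem:box}, generalised in Lemma~\ref{lem:boxMarkov}), which is the precise content behind the phrase ``time-reversal machinery'' in your last paragraph; without it the Tanaka--Meyer step does not close, even in the Brownian/$L^2$ setting you restrict to. Your instinct to treat finitely supported measures first and then pass to the limit does match the paper's strategy (Sections~\ref{sect:finiteatoms}--\ref{sect:approxatom}). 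For perspective, note also that the theorem as stated is Root's 1969 existence result: the paper records it as a citation and recovers it a fortiori from Theorem~\ref{thm:mult_marg_main}, rather than re-proving it in place; the route you are attempting is the paper's self-contained one, not Root's original weak-compactness argument on the space of barriers.
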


The first significant generalisation of this result is due to \cite{Rost}
who showed that the result generalised to transient Markov processes under
certain conditions. The condition that the probability measure $\mu_1$ has
finite second moment has only very recently been further relaxed to the more
natural condition that the measure has a finite first moment. This was first achieved by  \cite{GassiatOberhauserdosReis}, who have extended Root's result to the case of one-dimensional (time-inhomogeneous) diffusions using PDE methods. The result was also obtained by \cite{BeiglbockCoxHuesmann} using methods from Optimal Transport theory.

\begin{remark}\label{rk:unique}
  \cite{Loynes} showed, as used above in Lemma \ref{lem:unique}, that in Theorem \ref{thm:Root} the barrier can be taken to be regular and is then unique. 
%  is essentially unique: if $X_{\sigma_{\R^*_1}}\sim X_{\sigma_{\R^*_2}}\sim\mu$ for two barriers $\R^*_1,\R^*_2$, then necessarily $X_{\sigma_{\R^*_1}\land \sigma_{\R^*_2}}$ also embeds $\mu$. In the case where the stopping times $\sigma_{\R^*_1}$ and $\sigma_{\R^*_2}$ are uniformly integrable, and the barriers are regular, then it follows that $\R^*_1=\R^*_2$.
\end{remark}

We next recall the recent work of \cite{CoxWang:11} and \cite{GassiatOberhauserdosReis}. For a function $u:(t,x)\in\re_+\times\re\longmapsto u(t,x)\in\re$, we denote by $\partial_t u$ the $t-$derivative, $Du,D^2u$ the first and second spacial derivatives, i.e. with respect to the $x$-variable, and we introduce the (heat) second order operator
 \begin{eqnarray}
 \Lc u 
 &:=&
 -\partial_tu + \frac{1}{2}\eta^2D^2u.
 \end{eqnarray}
Consider the variational inequality or obstacle problem:
 \begin{eqnarray}\label{eq:vi}
 \min\big\{-\Lc u \;,\; u-U^{\mu_1}\big\}
 =0
 &\mbox{and}&
 u(0,\cdot)=\pmu{0}.
 \end{eqnarray}

Then, based on the existence result of \cite{Root}, \cite{CoxWang:11} proved the following result.

\begin{theorem} [Theorem~4.2, \cite{CoxWang:11}; Theorem~2, \cite{GassiatOberhauserdosReis}]\label{thm:CoxWang}
Let $\bmu_1=(\mu_0,\mu_1)$ be centred probability measures on $\re$ in convex order. Then, there is a unique solution $u^1$ of \eqref{eq:vi} which extends continuously to $[0,\infty]\times[-\infty,\infty]$, and the Root solution of the {\rm SEP}$(\bmu_1)$ is induced by the regular barrier
\begin{eqnarray*}
\R^*
&=&
\big\{(t,x)\in[0,\infty]\times[-\infty,\infty]: u^1(t,x)=U^{\mu_1}(x)\big\}.
\end{eqnarray*}
Moreover, we have the representation $u^1(t,x)=-\e\big|X_{t\land \sigma_{\R^*}}-x\big|,$ for all $t\ge 0, x\in\re$.
\end{theorem}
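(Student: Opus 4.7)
The plan is to start from Root's existence theorem, pick a regular Root barrier $\R^*$ such that $\sigma^*:=\sigma_{\R^*}$ is a UI stopping time embedding $\mu_1$ from $\mu_0$, and \emph{define} the candidate
\begin{equation*}
u^1(t,x):=-\e^{\mu_0}\bigl|X_{t\wedge\sigma^*}-x\bigr|.
\end{equation*}
Tanaka's formula applied to the uniformly integrable martingale $X_{\cdot\wedge\sigma^*}$ yields the working identity
\begin{equation*}
u^1(t,x) \;=\; U^{\mu_0}(x) \;-\; \e^{\mu_0}\bigl[L^x_{t\wedge\sigma^*}\bigr],
\end{equation*}
and a second application of Tanaka at $\sigma^*$, together with $X_{\sigma^*}\sim\mu_1$, pins down the total local time $\e L^x_{\sigma^*}=U^{\mu_0}(x)-U^{\mu_1}(x)$. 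From this representation, joint continuity of local times under the standing assumptions on $\eta$ gives continuity of $u^1$ on $\re_+\times\bI$, monotone convergence gives $u^1(t,x)\downarrow U^{\mu_1}(x)$ as $t\uparrow\infty$, the initial condition $u^1(0,\cdot)=U^{\mu_0}$ is immediate, the obstacle bound $u^1\ge U^{\mu_1}$ follows from monotonicity of $t\mapsto L^x_t$, and the continuous extension to $[0,\infty]\times[-\infty,\infty]$ follows because $u^1(t,x)+|x|$ has bounded continuous limits at both $t=\infty$ and $|x|=\infty$ (the latter since $\mu_0$ is centred).

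For the parabolic part of \eqref{eq:vi}, the inequality $-\Lc u^1\ge 0$ follows from the monotonicity in $t$ and concavity in $x$ read off the representation. The complementarity $\Lc u^1=0$ on the open continuation region $\R^{*c}$ is the PDE content of the theorem and would be obtained probabilistically: writing $u^1(s,\cdot)=U^{\nu_s}$ for $\nu_s:=\mathrm{Law}(X_{s\wedge\sigma^*})$, the strong Markov property applied just before the barrier is hit shows that on any small rectangle inside $\R^{*c}$ the family $\nu_s$ evolves under the free forward generator of $X$, so the potential-density duality $\partial_t U^{\nu_s}(x)=-\eta(x)^2\rho_s(x)=\tfrac{1}{2}\eta(x)^2\partial_{xx}U^{\nu_s}(x)$ gives $\Lc u^1=0$ there. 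Conversely, on $\R^*$ the absorbed process accumulates no further local time at its stopping level, so together with the pinned value of $\e L^x_{\sigma^*}$ this yields $u^1=U^{\mu_1}$ on $\R^*$. The barrier identification $\R^*=\{u^1=U^{\mu_1}\}$ then follows by combining this with $u^1>U^{\mu_1}$ on $\R^{*c}$, which is a direct consequence of the $\xi$-regularity of $\R^*$ (Lemma \ref{lem:unique} and Remark \ref{rem:regular lt} applied with $\xi=\delta_{(0,0)}$): the expected local time $\e L^x_{t\wedge\sigma^*}$ is strictly increasing past any $(t,x)\notin\R^*$.

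Uniqueness of the VI solution in the stated regularity class is a standard comparison-principle argument for parabolic obstacle problems; alternatively, the viscosity-solutions uniqueness of \cite{GassiatOberhauserdosReis} can be invoked, and the probabilistic representation then emerges as a by-product of the construction. The main technical obstacle I anticipate is the PDE step: the barrier $\R^*$ has only the regularity guaranteed by lower semicontinuity of $\ovl t_{\R^*}$, the coefficient $\eta$ is only locally Lipschitz and may degenerate near $\partial\I$, and the marginals are assumed only to have finite first moment, so the duality argument sketched above must be made rigorous either in a viscosity sense or via a careful stability/approximation scheme (smoothing the marginals and coefficients, passing to the limit), in the spirit of \cite{CoxWang:11} and \cite{GassiatOberhauserdosReis}. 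Boundary behaviour at $\partial\bI$ is handled separately using the absorbing/inaccessible dichotomy of the standing assumptions.
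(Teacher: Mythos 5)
The paper does not prove this statement: it is quoted with explicit attribution to \cite{CoxWang:11} and \cite{GassiatOberhauserdosReis}, and the text immediately after merely recalls that the former establish it in a Sobolev-space framework while the latter use viscosity solutions. The paper's own self-contained contribution, Theorem~\ref{thm:main} (a special case of Theorem~\ref{thm:mult_marg_main}), runs in the opposite logical direction from what you propose: there the optimal stopping problem~\eqref{eq:opt_stop} is the primitive object, the barrier $\R^1$ is \emph{derived} from it, and the embedding property is proved by a time-reversal argument, deliberately avoiding both Root's existence theorem and any PDE theory. Your plan is instead a reconstruction of the route of \cite{CoxWang:11}, where one starts from a given Root barrier, defines $u^1$ as the stopped potential, and verifies the variational inequality.

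Within that route there are genuine gaps beyond the technicalities you already flag. Most importantly, you propose to ``start from Root's existence theorem,'' which in this paper is Theorem~\ref{thm:Root} and assumes $\mu_0=\delta_0$, $\eta\equiv 1$, and $\mu_1$ with finite \emph{second} moment. Theorem~\ref{thm:CoxWang} is stated for arbitrary centred pairs in convex order over a general diffusion, i.e., finite first moment only; in that generality the very existence of a Root barrier is part of what \cite{GassiatOberhauserdosReis} and \cite{BeiglbockCoxHuesmann} establish, so invoking it as a black box is circular, and one must either restrict to $L^2$ marginals as \cite{CoxWang:11} do, or run a truncation-and-limit scheme first. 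Second, while the representation formula, the initial condition, $u^1\ge U^{\mu_1}$, and $-\Lc u^1\ge 0$ do follow easily from the stopped-potential definition, the complementarity $\Lc u^1=0$ on the continuation region and, above all, the uniqueness of VI solutions for a degenerate, merely locally Lipschitz $\eta$ and barely integrable marginals are the actual theorems of the cited papers, not a ``standard comparison-principle argument.'' You acknowledge this candidly, but as written the proposal is a plan that ultimately defers to the very results the theorem is crediting rather than proving them.
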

In \cite{CoxWang:11}, the solution to the variational inequality was determined as a solution in an appropriate Sobolev space, while \cite{GassiatOberhauserdosReis} show that the solution can be understood in the viscosity sense.

\subsection{Optimal stopping characterisation}

The objective of this paper is to provide a probabilistic version of the last result, and its generalisation to the multiple marginal problem. 
Our starting point is the classical probabilistic representation of the solution to \eqref{eq:vi} as an optimal stopping problem. Define now
 \begin{eqnarray}\label{eq:opt_stop}
 u^1(t,x)
 :=
 \sup_{\tau\in\T^t}J^1_{t,x}(\tau)
 &\mbox{with}&
 J^1_{t,x}(\tau)
 :=
 \e^x\big[U^{\mu_0}(Y_\tau) + (\pmu{1}-\pmu{0})(Y_\tau)\1_{\{\tau<t\}}
   \big],
 \end{eqnarray}
where $\T^t$ is the collection of all $(\F_t)$--stopping times $\tau\le t$.
% In particular, by starting from this representation, we are able to avoid the
% regularity required for the analysis of the solution of the obstacle problem
% \eqref{eq:vi}, together with the difficult uniqueness issue. We observe that \cite{GassiatOberhauserdosReis} adapted the arguments of \cite{CoxWang:11} by using the theory of viscosity solutions.
Then, using classical results, see e.g.\ \cite{BensoussanLions:82}, when properly understood, $u_1$ in \eqref{eq:opt_stop} is a solution to \eqref{eq:vi}. Uniqueness, in an appropriate sense, of solutions to \eqref{eq:vi}, then allows to deduce that the characterisation of the Root barrier given in Theorem~\ref{thm:CoxWang} corresponds to the stopping region of the optimal stopping problem \eqref{eq:opt_stop}
 \begin{eqnarray}\label{eq:Rmu_definition}
 \R^1
 &:=&
 \big\{(t,x)\in[0,\infty]\times[-\infty,\infty]: u^1(t,x)=U^{\mu_1}(x)\big\}.
 \end{eqnarray}
The probabilistic approach we develop in this paper provides a self-contained construction of the Root solution, and does not rely on the existence result of \cite{Root} or PDE results. Indeed, these follow from the following direct characterisation which is a special case of Theorem~\ref{thm:mult_marg_main} below.

\begin{theorem}\label{thm:main}
Let $\bmu_1=(\mu_0,\mu_1)$ be centred probability measures on $\bI$ in convex order. Then, $\R^1$ defined by \eqref{eq:opt_stop}-\eqref{eq:Rmu_definition} is the regular barrier inducing the Root solution of the {\rm SEP}$(\bmu_1)$. Moreover,
\begin{eqnarray*}
u^1(t,x)
&=&
-\e^{\mu_0}\big|X_{t\land \sigma_{\R^1}}-x\big|,
~~\mbox{for all}~~t\ge 0,~x\in\re.
\end{eqnarray*}
\end{theorem}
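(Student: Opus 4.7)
The plan is to establish, in sequence, (i) that $\R^1$ is a regular barrier, (ii) the representation $u^1(t,x)=-\e^{\mu_0}|X_{t\wedge\sigma_{\R^1}}-x|$ via a time-reversal argument, and (iii) the embedding $X_{\sigma_{\R^1}}\sim\mu_1$ together with $\sigma_{\R^1}\in\T$ by letting $t\to\infty$ in the representation. Throughout I derive everything directly from \eqref{eq:opt_stop}, avoiding any appeal to Root's existence result.

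For (i), I would first record basic regularity: taking $\tau\equiv0$ in \eqref{eq:opt_stop} gives $u^1\geq U^{\mu_1}$; bounding the integrand by $U^{\mu_0}(Y_\tau)$ and using concavity of $U^{\mu_0}$ with the martingale property of $Y$ yields $u^1\leq U^{\mu_0}$; and standard optimal stopping theory makes $u^1$ upper semi-continuous, so $\R^1=\{u^1=U^{\mu_1}\}$ is closed. The key structural lemma is that $u^1(\cdot,x)$ is non-increasing in $t$, which combined with the sandwich $U^{\mu_1}\leq u^1\leq U^{\mu_0}$ immediately yields the barrier (epigraph) property. I would prove this monotonicity by a direct coupling: for $t_1<t_2$ and arbitrary $\tilde\tau\leq t_2$, take $\tau:=\tilde\tau\wedge t_1$, which is admissible in the $t_1$-problem; splitting on the $\F_{t_1}$-measurable events $\{\tilde\tau<t_1\}$, $\{t_1\leq\tilde\tau<t_2\}$, $\{\tilde\tau=t_2\}$ reduces $J^1_{t_1,x}(\tau)-J^1_{t_2,x}(\tilde\tau)$ to $\e^x[\1_{\tilde\tau>t_1}(U^{\mu_0}(Y_{t_1})-U^{\mu_0}(Y_{\tilde\tau}))]$ plus a non-negative $(U^{\mu_0}-U^{\mu_1})$ contribution on $\{t_1\leq\tilde\tau<t_2\}$, both of which are $\geq 0$ by the supermartingale property of $U^{\mu_0}(Y_\cdot)$ and by $U^{\mu_1}\leq U^{\mu_0}$ respectively. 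The strict inequality $u^1(0,x)=U^{\mu_0}(x)>U^{\mu_1}(x)$ on $x\in(\ell_1,r_1)$ then yields regularity of $\R^1$ in the sense of Definition~\ref{def:regularbarrier}.

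For (ii), Tanaka--Meyer applied to the forward martingale diffusion $X$ gives
\[
-\e^{\mu_0}\bigl|X_{t\wedge\sigma_{\R^1}}-x\bigr|=U^{\mu_0}(x)-\e^{\mu_0}\bigl[L^x_{t\wedge\sigma_{\R^1}}\bigr],
\]
so the task reduces to identifying the deficit $U^{\mu_0}(x)-u^1(t,x)$ with $\e^{\mu_0}[L^x_{t\wedge\sigma_{\R^1}}]$. Using \eqref{eq:1} and Fubini, the right-hand side rewrites as a forward occupation-kernel integral of $p(s,\cdot,x)\eta(x)^2$ against the surviving law $\p^{\mu_0}(X_s\in\cdot,\,s<\sigma_{\R^1})$. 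On the other hand, the optimal stopping rule for \eqref{eq:opt_stop} at $(t,x)$ is $\tau^\ast=\inf\{s\leq t:(t-s,Y_s)\in\R^1\}$, and a Dynkin-type decomposition of $U^{\mu_0}(x)-u^1(t,x)$ expresses it as an integral against the backward transition kernel $p(s,x,\cdot)$ along trajectories of $Y$ which, under time reversal, remain in $\R^{1,c}$. The kernel duality $p(s,x,y)\eta(y)^2=p(s,y,x)\eta(x)^2$ (speed-measure symmetry of the time-homogeneous martingale diffusion) matches the two expressions. This time-reversal identification, carried out carefully near $\partial\bI$ and along the boundary of $\R^1$ where $\xi$-regularity prevents degeneracies, is the main technical obstacle.

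For (iii), write $\nu_t:=\mathrm{Law}(X_{t\wedge\sigma_{\R^1}})$; the representation yields $U^{\nu_t}=u^1(t,\cdot)$, the family is non-decreasing in convex order with $\nu_t\preceq_{\mathrm{cx}}\mu_1$ (from $u^1\geq U^{\mu_1}$), and by monotonicity in $t$ from (i) decreases to $u^1(\infty,\cdot)\geq U^{\mu_1}$. The reverse inequality follows from testing \eqref{eq:opt_stop} at $\tau=t$: $u^1(t,x)\leq U^{\mu_1}(x)+\e^x[(U^{\mu_0}-U^{\mu_1})(Y_t)]$, and the remainder vanishes as $t\to\infty$ since $Y_t$ spreads while $U^{\mu_0}-U^{\mu_1}$ is bounded and supported on $[\ell_1,r_1]$. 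Hence $U^{\nu_\infty}=U^{\mu_1}$, so $\nu_\infty=\mu_1$ by potential uniqueness on $\bI$, i.e.\ $X_{\sigma_{\R^1}}\sim\mu_1$; uniform integrability follows from $\nu_t\preceq_{\mathrm{cx}}\mu_1$ and standard potential-theoretic arguments.
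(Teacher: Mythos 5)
Your high-level plan mirrors the paper's (prove monotonicity of $u^1$ in $t$ to get the barrier shape, represent $u^1$ as the potential of the stopped process via time reversal, and then send $t\to\infty$ to read off the embedding), and your steps (i) and (iii) are essentially sound. Two minor quibbles there: in (i), the events $\{t_1\le\tilde\tau<t_2\}$ and $\{\tilde\tau=t_2\}$ are not $\F_{t_1}$-measurable --- you need to split only on $\{\tilde\tau<t_1\}$ versus $\{\tilde\tau\ge t_1\}$ and then apply optional sampling of the supermartingale $U^{\mu_0}(Y_\cdot)$; and in (iii), passing from ``$U^{\nu_t}\to U^{\mu_1}$'' to ``$X_{\sigma_{\R^1}}\sim\mu_1$'' needs Fatou plus an argument matching the means, since potentials can lose mass at $\partial\bI$ --- this is precisely what Lemma~\ref{lem:u-beta-second-prop} does.

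The genuine gap is step (ii). You write ``the kernel duality $p(s,x,y)\eta(y)^2 = p(s,y,x)\eta(x)^2\ldots$ matches the two expressions'' and flag the boundary as a ``technical obstacle,'' but this is where the entire weight of the paper lies (Sections~\ref{sect:finiteatoms} and \ref{sect:approxatom}). The free-process speed-measure symmetry does \emph{not} directly transfer to a duality between the forward occupation measure of $X$ killed at $\R^1$ and the backward occupation measure of $Y$ killed at $\R^1$: the barrier is a time-dependent, potentially highly irregular domain (the free boundary of an obstacle problem, so it may have countably many ``fingers,'' need not be smooth, and need not have finite perimeter), and the killed transition densities on each side of the reversal are \emph{a priori} different objects. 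What the paper actually proves is the stopped-local-time symmetry
\begin{equation*}
\e^x\big[L^y_{t\wedge H_{a,b}}\big]=\e^y\big[L^x_{t\wedge H_{a,b}}\big]
\end{equation*}
(Lemma~\ref{lem:box}/\ref{lem:boxMarkov}), which is the correct and rigorous incarnation of the duality you invoke, \emph{but only for a static box}. This is then threaded through Lemma~\ref{lem:new_box} to handle a barrier that is locally two rays, then propagated by induction over the number of atoms for finitely supported $\beta$ (Lemma~\ref{lem:induction}), then extended by two separate approximation arguments (Proposition~\ref{prop:LocFinSupp} and Section~\ref{sect:approxatom}) to reach general $\beta$. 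Each stage controls exactly the boundary degeneracies you acknowledge but do not address. Without some such reduction --- finite supports, boxes, or an equally explicit construction --- there is no route from the free-kernel symmetry to the identity $U^{\mu_0}(x)-u^1(t,x)=\e^{\mu_0}[L^x_{t\wedge\sigma_{\R^1}}]$ for a general barrier, and the proof does not close.
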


\section{Multiple Marginal Root Solution of the SEP: main results}
\label{sect:multipleRoot}
\setcounter{equation}{0}

\subsection{Iterated optimal stopping and multiple marginal barriers}
\label{subsec:mainresult}
In order to extend the Root solution to the multiple marginals SEP$(\bmu_n)$, we now introduce the following natural generalisation of the previous optimal stopping problem. Denote
 \begin{eqnarray*}
 \dU^k(x)
 :=
 U^{\mu_k}(x)-U^{\mu_{k-1}}(x),
 &\mbox{and}&
 u^0(t,x):=U^{\mu_0}(x),\quad t\in [0,\infty], x\in \bI.
 \end{eqnarray*}
The main ingredient for our construction is the following iterated sequence of optimal stopping problems:
 \begin{eqnarray}\label{eq:udefn}
 u^k(t,x) 
 :=
 \sup_{\tau\in\T^t}
  J^k_{t,x}(\tau)
 &\mbox{where}&
 J^k_{t,x}(\tau)
 :=
 \mathbb{E}^x\Big[u^{k-1}(t-\tau,Y_{\tau}) 
                          + \dU^k(Y_{\tau}) \1_{\{\tau< t\}}
                   \Big],
 ~1\le k\le n.~~
 \end{eqnarray}
The stopping regions corresponding to the above sequence of optimal stopping problems are given by:
 \begin{eqnarray}\label{eq:Rk}
  \R^k 
  := 
  \big\{(t,x) \in \Ss:
           \du^k(t,x) = \dU^k(x)
  \big\}
  &\mbox{with}&
  \du^k:=u^k-u^{k-1},
  k=1,\ldots,n,
 \end{eqnarray}
 and the optimal stopping time which solves \eqref{eq:udefn} is the first entry to $\R^k$ by the time space process starting in $(t,x)$ and running backwards in time: $\tau^t(k):=\inf\{s\geq 0: (t-s,Y_s)\in \R^k\}\land t$. 
 
 Our main result shows that the same barriers used to stop the process running forward in time:
 \begin{eqnarray}\label{eq:sigmak}
 \sigma_{0}=0,
 ~~
 \sigma_k
 := \sigma_{\R^k}(\sigma_{k-1}) 
 = \inf \big\{ t \geq \sigma_{k-1} : (t,X_t) \in \R^k \big\},
 ~ k=1,\ldots,n,
\end{eqnarray}
give the multiple marginals Root solution of SEP$(\bmu_n)$. It is important to note that the barriers in \eqref{eq:Rk} are not necessarily nested -- both $\R^k$ and $\R^{k-1}$ may contain points which are not in the other barrier. 
\begin{figure}[th]
  \centering
  \includegraphics[width=\textwidth]{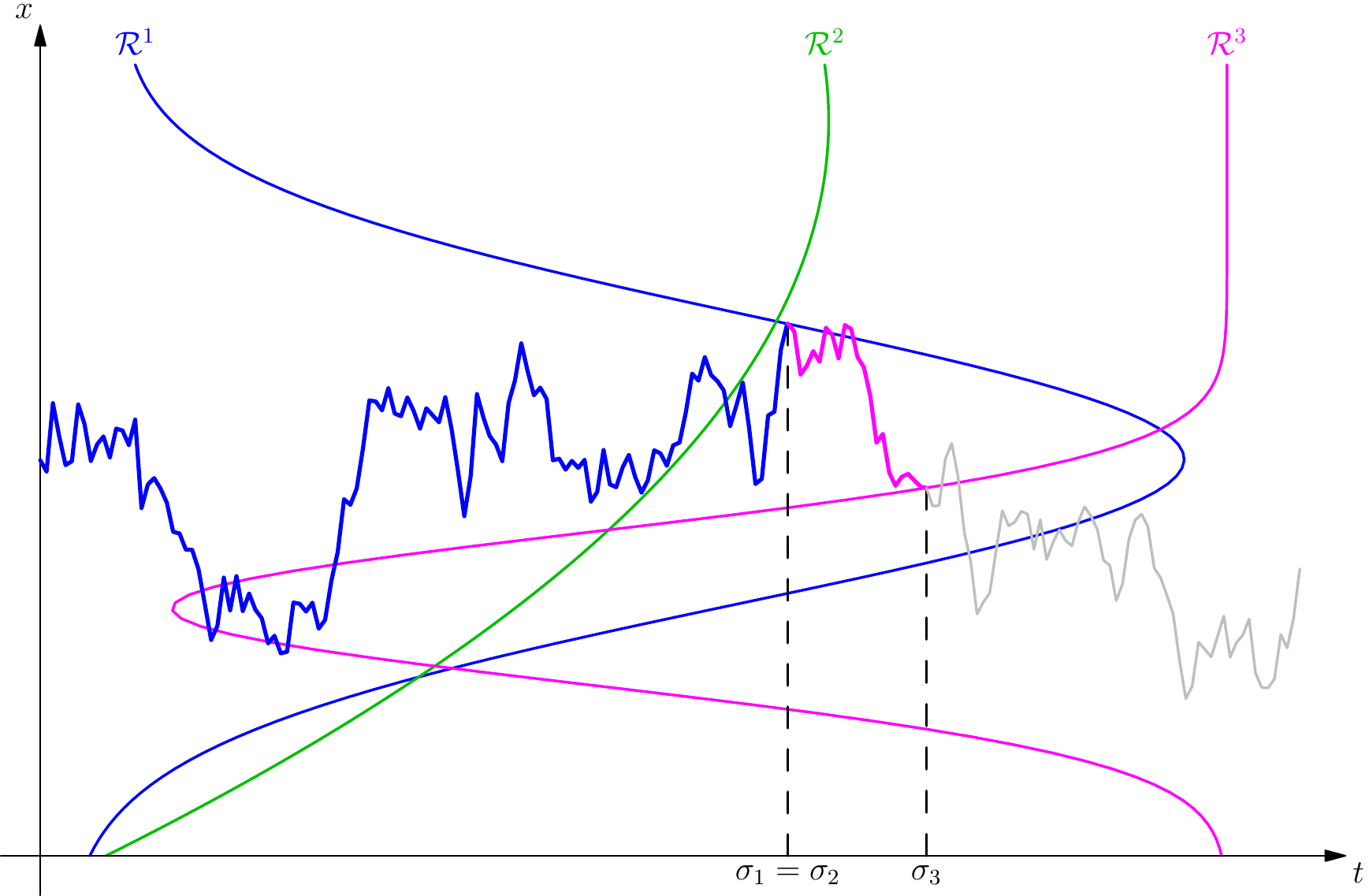}
  \caption{A realisation of a Root-type solution to the multiple marginal problem. Here we depict three barriers which are not ordered (in the sense that $\mathcal{R}^1 \supsetneq \mathcal{R}^2 \supsetneq \mathcal{R}^3$). As a result, the given realisation can enter the second and third barriers before the first stopping time. Note also that since the first stopping time, $\sigma_1$, happens at a point which is also inside the second barrier, we have here $\sigma_1 = \sigma_2$. }
  \label{fig:MultipleBarriers}
\end{figure}
An example of a possible sequence of stopping times is depicted in Figure~\ref{fig:MultipleBarriers}. Since the barriers are not necessarily nested, in general $\sigma_k$ will not be equal to the first entry time to the barrier, only the first entry time \emph{after} $\sigma_{k-1}$. It may also be the case that $\sigma_{k-1} = \sigma_k$. Both cases are shown in Figure~\ref{fig:MultipleBarriers}.

Finally, it will be useful to introduce the (time-space) measures on $\Ss$
defined for all Borel subsets $A$ of  $\Ss$ by:
\begin{eqnarray*}
  \xi^k[A] 
  :=
  \p^{\mu_0}\big[(\sigma_{k},X_{\sigma_{k}}) \in A\big],
  &k=0,\ldots,n.&
\end{eqnarray*}

We are now ready to state our main result, which includes Theorem~\ref{thm:main} as a special case.

\begin{theorem}\label{thm:mult_marg_main}
Let $\bmu_n$ be a vector of centred probability measures on $\bI$ in convex order. Then $\R^k$ is a $\xi^{k-1}$-regular barrier for all $k=1,\ldots,n$, and $(\sigma_1, \sigma_2,\dots,\sigma_n)$ solves {\rm SEP}$(\bmu_n)$. Moreover, we have
  \begin{eqnarray}\label{eq:8}
    u^k(t,x) 
    = 
    -\e^{\mu_0} \big| X_{t \wedge \sigma_k} - x\big|,
    &\mbox{for all}& 
    t \ge 0,~x \in \bI,~k=1,\ldots,n.
  \end{eqnarray}
\end{theorem}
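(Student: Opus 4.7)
The plan is to proceed by induction on $k$, exploiting a time-reversal duality between the backward optimal stopping problem \eqref{eq:udefn} and the forward embedding by $\sigma_k$. The base case $k=0$ is immediate, since $u^0(t,x)=U^{\mu_0}(x)=-\e^{\mu_0}|X_0-x|$ and $\sigma_0=0$ trivially embeds $\mu_0$. For the inductive step, one assumes the statement up to index $k-1$ and simultaneously establishes three things for $k$: (a) $\R^k$ is a $\xi^{k-1}$-regular barrier; (b) $X_{\sigma_k}\sim\mu_k$; (c) the potential identity $u^k(t,x)=-\e^{\mu_0}|X_{t\wedge\sigma_k}-x|$. Note that (b) follows from (c): setting $t=\infty$ in (c) shows that $u^k(\infty,\cdot)$ is the potential of the law of $X_{\sigma_k}$, while the definition of $\R^k$ combined with the inductive hypothesis forces $u^k(\infty,\cdot)=U^{\mu_{k-1}}+\dU^k=U^{\mu_k}$, and potentials determine their measures.

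The first step is to extract the barrier structure of $\R^k$. Standard optimal stopping theory applied to \eqref{eq:udefn} gives that $\R^k$ is closed and that $\tau^t(k)$ is optimal. The barrier monotonicity --- $(t,x)\in\R^k$ implies $(s,x)\in\R^k$ for all $s\ge t$ --- should follow from a coupling argument: in the backward problem starting from $(s,x)$ the controller may always stop at time $0$, and increasing $s$ only adds ``optional'' reward $\dU^k$ that may or may not be collected; a Markovian argument shows that if collecting it was not worthwhile at horizon $t$, it is still not worthwhile at any larger horizon. The $\xi^{k-1}$-regularity is then a consequence of Lemma \ref{lem:unique} once (b) is proved, since any strict enlargement of $\R^k$ would alter the value function $u^k$, contradicting its definition.

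The core of the proof is (c), which is established via time-reversal. Fix $(t,x)$ and consider on the one hand the forward stopped trajectory $(X_{s\wedge\sigma_k})_{0\le s\le t}$ under $\p^{\mu_0}$, and on the other the backward trajectory $(Y_s)_{0\le s\le t}$ under $\p^x$. Expressing both $u^k(t,x)$ and $-\e^{\mu_0}|X_{t\wedge\sigma_k}-x|$ as potentials $-\int|y-x|\,\nu(dy)$ and differentiating twice in $x$ reduces the identity to a local-time comparison via \eqref{eq:1}: the quantity $\e^{\mu_0}[L^x_{t\wedge\sigma_k}]$ must match a corresponding occupation quantity for the backward problem generated by the stopping region $\R^k$. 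The alignment of the forward barrier $\R^k$ with the backward stopping region (they are by construction the same subset of $\Ss$), together with the induction hypothesis that $u^{k-1}(t,\cdot)$ is the potential of $X_{t\wedge\sigma_{k-1}}$, is what lets the local-time identity close; decomposing the backward problem into the first visit to $\R^k$ and a residual problem supplies the right-hand side up to the $\dU^k$ term.

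The main obstacle --- and the reason the paper devotes separate sections to it --- is making the time-reversal rigorous without any regularity assumptions on the $\mu_k$. The plan is therefore to first prove (c) when each $\mu_k$ is locally finitely supported: there every atom of $X_{\sigma_k}$ can be explicitly traced to an entry point in $\R^k$ and the local-time bookkeeping reduces to a discrete computation. The general case is obtained by approximating $\mu_0,\ldots,\mu_n$ by locally finitely supported measures in convex order and showing that the potentials, value functions $u^{k,N}$, barriers $\R^{k,N}$, and hitting times $\sigma_k^N$ all converge to their counterparts. The delicate point is the convergence of the laws of $X_{\sigma_k^N}$ to the law of $X_{\sigma_k}$, which requires excluding loss of mass at $\partial\bI$ and exploiting Loynes-type stability of regular barriers under monotone limits.
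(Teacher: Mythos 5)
Your overall architecture closely mirrors the paper's: induction on $k$, the potential identity $u^k(t,x)=-\e^{\mu_0}|X_{t\wedge\sigma_k}-x|$ as the crux, time-reversal, reduction to atomic measures, then a limiting argument. But several of the steps you leave at the level of intention are precisely where the work is. Most critically, the time-reversal does not close without a specific local-time symmetry: the paper's Lemma \ref{lem:box} (namely $\e^x[L^y_{t\land H_{a,b}}]=\e^y[L^x_{t\land H_{a,b}}]$) and its ``box'' consequence (Lemma \ref{lem:new_box}) showing that the equality $v^{\xi^\beta}=u^\beta$ propagates upward through a rectangle bounded by two rays of the barrier. Your sentence about the occupation quantities matching correctly identifies what must happen but not why it does. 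Similarly, the barrier monotonicity is not a soft ``coupling/Markovian argument''; it is derived (Corollary \ref{cor:R-is-barrier}) from the fact that $u^\beta-v^\xi$ is non-increasing in $t$ (Lemma \ref{lem:opt-stop-prop}\ref{item:u-less-b-decreasing}), which in turn rests on the supermartingale property of both $V^t$ and $V^{t'}-V^t$ (Lemma \ref{lem:vxi supermart}), proved via Hunt's switching identity and a comparison of stopped processes.

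Two of your structural claims are also not quite right. Your derivation of $\xi^{k-1}$-regularity from Lemma \ref{lem:unique} is circular: that lemma only says a non-regular barrier can be enlarged to a regular one with the same stopped law, not that a given barrier is regular; the paper derives regularity as a \emph{consequence} of $u^\beta=v^{\xi^\beta}$ (Remark \ref{rem:regularity}), via the strict local-time inequality $\Eps{\xi}{L^x_{t\wedge\sigma_{\R^\beta}}}<\Eps{\xi}{L^x_{\sigma_{\R^\beta}}}$ off the barrier. Your claim that (b) follows from (c) by ``setting $t=\infty$'' skips the nontrivial fact that $u^k(t,\cdot)\to U^{\mu_k}$ pointwise as $t\to\infty$ (Lemma \ref{lem:opt-stop-prop}\ref{item:v-xi-u-beta-ineq}) and the Fatou/dominated-convergence argument combining it with (c) (Lemma \ref{lem:u-beta-second-prop}). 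Finally, your approximation scheme replaces all of $\mu_0,\ldots,\mu_n$ simultaneously by locally finitely supported measures; the paper instead proves a single-step result (Theorem \ref{thm:main_general_start}) for an \emph{arbitrary} time-space starting law $\xi$ and iterates it, approximating only the one target $\beta=\mu_k$ inside each step. This modularity is what avoids tracking convergence of the intermediate stopped laws --- a delicate point your plan would have to address and for which Loynes-type stability of barriers is not by itself sufficient.
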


Our proof will proceed by induction. Its main ingredients will be summarised in Section \ref{sect:induction}. 

\subsection{Optimality}
\label{sec:optimality}

In this section, we show optimality of the constructed $n$-fold Root solution of the multiple marginal Skorokhod embedding problem. 
We recall the main ingredients of our embedding defined in \eqref{eq:udefn}--\eqref{eq:sigmak}. We also denote $\ovl t^k:=\ovl t_{\R^k}$. Define the set of all solutions to {\rm SEP}$(\bmu_n)$ in \eqref{eq:nSEP}:
 \begin{eqnarray*}
 \T(\bmu_n)
 &:=&
 \big\{\rho=(\rho_1,\ldots,\rho_n)\in\T^n:~\rho_1\le\ldots \le \rho_n,~\mbox{and}~X_{\rho_i}\sim\mu_i,
                                                     ~i=1,\ldots,n 
 \big\}.
 \end{eqnarray*}
For a given function $f:\re\longrightarrow\re_+$ we consider the optimal $n$-fold embedding problem:
 \begin{eqnarray}\label{opt-prob}
 \inf_{\rho\in\T(\bmu_n)}\; \e^{\mu_0}\Big[\int_0^{\rho_n}f(t)dt\Big].
 \end{eqnarray}
 \begin{theorem}
\label{thm:optimal}
Let $\bmu_n$ be a vector of centred probability measures on $\bI$ in convex order and $f$ a non-negative non-decreasing function. Then the $n$-tuple $\sigma=(\sigma_1,\ldots,\sigma_n)$ in \eqref{eq:sigmak} is a solution of \eqref{opt-prob}:
 \begin{eqnarray*}
 \sigma\in\T(\bmu_n)
 ~~\mbox{and}~~
 \e^{\mu_0}\Big[\int_0^{\sigma_n}f(t)dt\Big]
 \;\le\;
 \e^{\mu_0}\Big[\int_0^{\rho_n}f(t)dt\Big]
 &\mbox{for all}&
 \rho\in\T(\mathbf{\bmu}_n).
 \end{eqnarray*}
\end{theorem}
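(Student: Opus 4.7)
My plan is to first reduce the optimality claim to a pointwise comparison between stopped absolute-value potentials, then to establish that comparison inductively. Writing $F(t):=\int_0^t f(s)\,ds$, $F$ is non-negative, non-decreasing and convex with $F(0)=0$, and the decomposition $F(t) = f(0)\,t + \int_0^\infty (t-a)^+\,df(a)$ shows that the target inequality $\e^{\mu_0}[F(\sigma_n)] \leq \e^{\mu_0}[F(\rho_n)]$ will follow once we prove
\begin{equation*}
\e^{\mu_0}[(\sigma_n-a)^+] \leq \e^{\mu_0}[(\rho_n-a)^+], \qquad \forall\,a \geq 0, \; \forall\rho\in\T(\bmu_n).
\end{equation*}
The pathwise occupation-time identity $(\rho-a)^+ = \int_{\bI}\eta^{-2}(x)[L^x_\rho - L^x_{a\wedge\rho}]\,dx$ combined with the fact that $\e^{\mu_0}L^x_{\sigma_n} = \e^{\mu_0}L^x_{\rho_n} = U^{\mu_0}(x) - U^{\mu_n}(x)$ (both embed $\mu_n$; apply Tanaka) reduces this further to the pointwise local-time comparison $\e^{\mu_0}L^x_{t\wedge\sigma_n} \geq \e^{\mu_0}L^x_{t\wedge\rho_n}$ for all $(t,x) \in \re_+\times\bI$. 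Applying Tanaka once more and using the identity $u^n(t,x)=-\e^{\mu_0}|X_{t\wedge\sigma_n}-x|$ from Theorem~\ref{thm:mult_marg_main}, this is equivalent to
\begin{equation*}
u^n(t,x) \;\leq\; v^\rho(t,x) \;:=\; -\e^{\mu_0}\big|X_{t\wedge\rho_n}-x\big|,\qquad \forall\,(t,x). \qquad (\star)
\end{equation*}

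I would prove $(\star)$ by induction on $n$, the base case $n=0$ being trivial. For the inductive step, fix $\rho\in\T(\bmu_n)$ and set $\rho':=(\rho_1,\ldots,\rho_{n-1})\in\T(\bmu_{n-1})$; the inductive hypothesis then yields $u^{n-1} \leq v^{\rho'}$. The key is to verify that $v^\rho$ is a supersolution of the variational inequality characterising $u^n$ (cf.\ \eqref{eq:vi}--\eqref{eq:udefn}), since together with the minimality of $u^n$ as such a supersolution (classical Snell-envelope theory, \cite{BensoussanLions:82}) this forces $u^n\leq v^\rho$. Three items must be checked: (i) the initial condition $v^\rho(0,\cdot)=U^{\mu_0}$, which is immediate; (ii) the obstacle inequality $v^\rho \geq u^{n-1} + \dU^n$, which follows from the inductive hypothesis combined with the pathwise monotonicity $L^x_{\rho_n}-L^x_{t\wedge\rho_n} \geq L^x_{\rho_{n-1}}-L^x_{t\wedge\rho_{n-1}}$ (immediate from $\rho_{n-1}\leq\rho_n$ and monotonicity of $L^x$); and (iii) the supersolution property $-\Lc v^\rho \geq 0$, which emerges from a direct computation: writing $v^\rho(t,x) = U^{\lambda_t}(x)$ with $\lambda_t$ the law of $X_{t\wedge\rho_n}$ under $\p^{\mu_0}$, and combining the forward Kolmogorov equation with $D^2U^{\lambda_t} = -2\lambda_t$ yields $-\Lc v^\rho = \eta^2(x)\,\p^{\mu_0}(X_{\rho_n}\in dx,\rho_n\leq t)$, a non-negative measure.

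The main technical difficulty lies in making step (iii) rigorous under the paper's minimal regularity assumptions (general one-dimensional martingale diffusion, marginals only in convex order with finite first moments). Since $v^\rho$ is only Lipschitz in $x$ and monotone in $t$, the identity $-\Lc v^\rho = \eta^2\,\p^{\mu_0}(X_{\rho_n}\in dx,\rho_n\leq t)$ must be read distributionally, and the resulting comparison $u^n\leq v^\rho$ requires an appropriate weak comparison principle. A purely probabilistic alternative, avoiding PDE altogether, is to verify directly that $s\mapsto v^\rho(t-s,Y_s)$ is a supermartingale on $[0,t]$ via a time-reversal coupling between the forward process $X$ (under $\p^{\mu_0}$) and the backward process $Y$ started at $(t,x)$. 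This is precisely the machinery developed in Sections~\ref{sect:induction}--\ref{sect:approxatom} for the embedding result itself, and the same technique should handle the subtleties required here.
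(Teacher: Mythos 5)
Your proposal is correct and gives a genuinely different argument from the paper. The paper proves optimality via an explicit pathwise (``Burkholder'') inequality, Lemma~\ref{lem:pathwiseineq}, built from the auxiliary functions $\varphi_k, \phi_k, h_k, \lambda_k$, and then controls expectations through the submartingale structure of Lemma~\ref{lem:Gsubmart}; this produces an explicit dual certificate (the paper highlights it as yielding a sharp martingale inequality). You instead use the Hardy--Littlewood decomposition $F(t)=f(0)t+\int_0^\infty(t-a)^+df(a)$, the occupation-time identity $t=\int\eta^{-2}(x)L^x_t\,dx$, and Tanaka to reduce the claim to the comparison $u^n(t,x)\le v^\rho(t,x):=-\e^{\mu_0}|X_{t\wedge\rho_n}-x|$, and you prove this by induction: the obstacle inequality $v^\rho\ge u^{n-1}+\dU^n$ follows from the pathwise local-time monotonicity plus the inductive hypothesis, and the domination $v^\rho\ge u^n$ then follows because $s\mapsto v^\rho(t-s,Y_s)$ is a supermartingale dominating the obstacle of the optimal stopping problem. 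The supermartingale property is exactly Lemma~\ref{lem:vxi supermart} applied to $\xi$ the time-space law of $(\rho_n,X_{\rho_n})$, so the technical issue you flag is already resolved in the paper by purely probabilistic means; you are borrowing the embedding-proof infrastructure (Section~\ref{sect:stoppedpotential}) rather than the appendix's pathwise-inequality machinery. This route is shorter and more transparent once Theorem~\ref{thm:mult_marg_main} and Lemma~\ref{lem:vxi supermart} are in hand, at the cost of not producing the explicit pathwise certificate (the $h_k$, $\lambda_k$) that the paper uses to link the result to robust-finance martingale inequalities. One minor remark: you do not actually need to invoke ``minimality of $u^n$ as a supersolution'' as a separate fact---the three checked properties give directly, for any $\tau\in\T^t$, $\e^x[u^{n-1}(t-\tau,Y_\tau)+\dU^n(Y_\tau)\1_{\{\tau<t\}}]\le\e^x[v^\rho(t-\tau,Y_\tau)]\le v^\rho(t,x)$, and taking the supremum over $\tau$ yields $u^n\le v^\rho$.
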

The above remains true for any stopping times $\rho_1, \ldots,\rho_n$ which embed $\bmu$ since if $\rho$ is not uniformly integrable then it is not minimal, see \cite[Section 8]{Obloj:04b}, and we can find smaller stopping times $\tilde \rho\in \T(\mathbf{\bmu}_n)$ for which the above bound is already satisfied.
 
Similar to many proofs of optimality of particular solutions to {\rm SEP}, see e.g.\ \cite{Hobson:98,Cox:2008aa,HLOST}, at the heart of our argument lies identification of a suitable pathwise inequality. Interpreting \eqref{opt-prob} as an iterated Martingale Optimal Transport problem, the pathwise inequality amounts to an explicit identification of the dual optimiser in the natural Kantorovich-type duality. Our inequality is inspired by the one developed by \cite{CoxWang:11}.

For all $(t,x)\in\re_+\times\bI$ and $k=n,\ldots,0$, we introduce the functions
 \begin{align*}
   &\varphi_{n+1}(t,x) 
     :=f(t), &&
   \varphi_k(t,x)
    :=\e^{t,x}\big[\varphi_{k+1}\big(\sigma_{\R^k},X_{\sigma_{\R^k}}\big)\big],\\
   & \phi_k(x)
    :=\int_{0}^x \varphi_k(0,y)\eta(y)^{-2} dy, &&
   \psi(x)
    := 2\int_0^x \int_0^y \eta(z)^{-2} \,dz.
 \end{align*}
Our main result below involves the following functions:
 \begin{equation}
 h_k(t,x)
 :=
 \int_0^t \!\varphi_k(s,x)ds-2\int_0^x \!\phi_k(y) dy,
 ~~\mbox{and}~
 \lambda_k(x)
 :=
 (h_{k+1}-h_k)\big(\overline{t}_k(x),x\big),
 ~(t,x)\in\re_+\times\I.
 \end{equation}
\begin{lemma}\label{lem:pathwiseineq}
Let $f$ be a non-negative non-decreasing function. Then for all $(s_i,x_i)_{0\le i\le n}\subset\re_+\times\bI$, with $0 =s_0\le
  s_1 \le \dots \le s_n$, we have:
  \begin{eqnarray}
    \label{eq:pathwiseineq}
    \int_0^{s_n}f(t)dt 
    &\ge&
    \sum_{i=1}^n \lambda_i(x_i) 
    +h_1(s_0,x_0)
    + \sum_{i=1}^n \big[h_i(s_i,x_i)-h_i(s_{i-1}, x_{i-1})\big]-\psi(x_n)f(0),
  \end{eqnarray}
and equality holds if $(s_i,x_i) \in \R^i$ for $i = 1,\dots,n$.
\end{lemma}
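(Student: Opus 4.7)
My strategy is to rearrange the right-hand side of \eqref{eq:pathwiseineq} into a sum of quantities $G_i(s_i, x_i)$ whose non-negativity reduces, after cancellation, to a pointwise comparison $\varphi_k \ge \varphi_{k+1}$ for the value functions arising from the iterated Snell-envelope construction.

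The crux is to prove $\varphi_k(t, x) \ge \varphi_{k+1}(t, x)$ on all of $\Ss$. Iterating the strong Markov property in the recursive definition of $\varphi_k$ gives the representation
\[
  \varphi_k(t, x) = \e^{t, x}\!\left[f\big(\tau^k_n\big)\right],
  \qquad
  \tau^k_k := \sigma_{\R^k},
  \quad
  \tau^k_j := \sigma_{\R^j}\big(\tau^k_{j-1}\big)
  \text{ for } j > k,
\]
where $\sigma_{\R^j}(u) := \inf\{s \ge u : (s, X_s) \in \R^j\}$ is monotone non-decreasing in $u$. A pathwise induction on $j$ then shows $\tau^k_j \ge \tau^{k+1}_j$ for all $j \ge k+1$, and taking $j = n$ plus the monotonicity of $f$ yields the comparison.

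The definition of $h_k$ is tailored so that forming the difference $G_k(s, x) := (h_{k+1} - h_k)(s, x) - \lambda_k(x)$ kills the $\phi$-contributions and leaves
\[
  G_k(s, x) = \int_{\ovl t_k(x)}^{s}[\varphi_{k+1} - \varphi_k](t, x)\, dt.
\]
The pointwise comparison above together with the identity $\varphi_k = \varphi_{k+1}$ on $\R^k$ shows $G_k(s, x) \ge 0$ for every $(s, x)$, with $G_k(s, x) = 0$ whenever $(s, x) \in \R^k$. Telescoping the bracketed sum,
\[
  h_1(s_0, x_0) + \sum_{i=1}^n \big[h_i(s_i, x_i) - h_i(s_{i-1}, x_{i-1})\big]
  = h_n(s_n, x_n) - \sum_{i=1}^{n-1} (h_{i+1} - h_i)(s_i, x_i),
\]
then substituting $(h_{i+1} - h_i)(s_i, x_i) = \lambda_i(x_i) + G_i(s_i, x_i)$, and invoking the elementary calculation that $\int_0^{s_n} f(t)\, dt - h_{n+1}(s_n, x_n)$ equals $\psi(x_n)f(0)$ (up to sign), coming from $\varphi_{n+1} \equiv f$ and the definition of $\psi$, the pathwise inequality reduces to $\sum_{i=1}^n G_i(s_i, x_i) \ge 0$, with equality when every $(s_i, x_i)$ lies in $\R^i$.

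\textbf{Main obstacle.} The whole argument hinges on the pointwise comparison $\varphi_k \ge \varphi_{k+1}$. Although the intuition ``start later $\Rightarrow$ hit later'' is transparent, a careful treatment is required because the barriers $\R^k$ are \emph{not} nested: between consecutive hitting times, the time–space trajectory may wander in and out of various $\R^j$ with $j \ne k$. One must also accommodate the possibility that some $\tau^k_j = +\infty$ (with the natural convention on $f(+\infty)$) or that the diffusion is absorbed at an accessible boundary of $\I$ before reaching the relevant barrier. Making the pathwise monotonicity $\tau^k_n \ge \tau^{k+1}_n$ rigorous under these pathologies, using the regularity of the barriers $\R^k$ from the preceding sections, is the only delicate part; the remaining steps are algebra.
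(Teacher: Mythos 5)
Your proposal follows essentially the same route as the paper's proof. Step 1 in the paper establishes the pointwise comparison $\varphi_k\ge\varphi_{k+1}$ via exactly the representation $\varphi_k(t,x)=\e^{t,x}[f(\zeta^k)]$, where $\zeta^k$ is your $\tau^k_n$, together with the pathwise monotonicity $\zeta^k\ge\zeta^{k+1}$ coming from the fact that first hitting after a later time is later; Step 2 derives the identity you call $G_k$ (the paper's \eqref{eq:diff_h_func}) and observes $\varphi_{k-1}=\varphi_k$ on $\R^{k-1}$; Step 3 is the same telescoping and the same bookkeeping of the $\psi(x_n)f(0)$ term via $h_{n+1}$. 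Your proposal therefore reconstructs the paper's argument faithfully. One remark on the point you flag as the ``main obstacle'': the non-nestedness of the barriers is in fact no obstacle at all for this step---the map $u\mapsto\inf\{s\ge u:(s,X_s)\in\R^j\}$ is monotone in $u$ for \emph{any} closed set $\R^j$, on every path, including paths that are absorbed or never hit (where both sides are $+\infty$ and $f$ extends by monotone limits); the paper correspondingly devotes no extra care to this.
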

The proof of the above inequality is entirely elementary, even if not immediate, and is reported in Appendix \ref{appendix:optimal}. 
The optimality in Theorem \ref{thm:optimal} then essentially follows  by evaluating the above on stopped paths $(\rho_i, X_{\rho_i})$ and taking expectations. Technicalities in the proof are mainly related to checking suitable integrability of various terms and the proof is also reported in Appendix \ref{appendix:optimal}. 

Finally, we note that the above pathwise inequality could be evaluated on paths of arbitrary martingale and, after taking expectations, would lead to a martingale inequality. The inequality would be sharp in the sense that we have equality for $X$ stopped at $\sigma$ in \eqref{eq:sigmak}. 
This method of arriving at martingale inequalities is linked to the so-called Burkholder method, see e.g.\ \cite{Burkholder:91}, and has been recently exploited in number of works, see e.g.\  \cite{TrajectorialDoob,BeiglbockNutz:14,OblojSpoidaTouzi}. 

\section{The inductive step}
\label{sect:induction}
\setcounter{equation}{0}

In this section we outline the main ideas behind the proof of Theorem \ref{thm:mult_marg_main}. The proof proceeds by induction. At the end of each step in the induction, we will determine a stopping time $\sx$, and the time-space distribution $\xi$, which corresponds to the distribution of the stopped process $(\sx,X_{\sx})$ under the starting measure $\mu_0$. This measure will be the key part of the subsequent definitions. Given this stopping time, and a new law $\beta$, we proceed to determine a new stopping time $\sxb$, and the corresponding time-space distribution $\xb$. This stopping time will embed the law $\beta$. This inductive step is summarised in Theorem \ref{thm:main_general_start} below. 

This stopping time $\sxb$ is constructed as the solution of an optimal stopping problem $u^\beta$, introduced below, with obstacle function appropriately defined by combining the potential function $v^\xi$ of the stopped process $X_{.\wedge \sx}$ and the difference of potentials between the starting distribution -- the spatial marginal of $\xi$ denoted $\ax$ -- and the target distribution $\beta$. We will also show that the function $u^\beta$ is equal to the potential function $v^{\xb}$, allowing us to iterate the procedure.

We now introduce the precise definitions. The measure $\mu_0$ will be a fixed integrable measure throughout, and so we will typically not emphasise the dependence of many terms on this measure. 

Let $\xi$ be the $\mathbb{P}^{\mu_0}-$time-space distribution of $(\sx,X_{\sx})$ for some UI stopping time $\sx\in\T$. The stopped potential $v^\xi$ is defined as the $\mathbb{P}^{\mu_0}-$potential of $X_{t\wedge\sx}$:
 \begin{eqnarray}\label{vxi}
 v^\xi(t,x)
 :=
 -\mathbb{E}^{\mu_0}\big[ |X_{t\wedge\sx}-x|\big],
 &t\ge 0,&
 x\in\bI.
 \end{eqnarray} 

Motivated by the iterative optimal stopping problems \eqref{eq:udefn}, we also introduce, for any probability measure $\beta$ on $\bI$, the difference of potentials
 \begin{eqnarray*}
 w^\beta:=U^\beta-U^{\ax}
 &\mbox{where}&
 \ax(A):= \xi([0,\infty)\times A), A\in \mathcal{B}(\bI),
 %~~\mbox{and}~~
 %\ax \preceq_{\text{\rm cx}} \beta,
 \end{eqnarray*}
and $\ax \preceq_{\text{\rm cx}} \beta$ is equivalent to $w^\beta\le 0$. Moreover, since $\sigma^\xi$ is UI, we have
 \begin{eqnarray}\label{vxi0}
 \mu_0 \preceq_{\text{\rm cx}} \ax, 
 &v^\xi(0,.) = \pmu{0},&
 \mbox{and}~~v^\xi(t,.) \searrow v^\xi(\infty,.):=U^\ax~\mbox{pointwise as}~t\nearrow\infty.
 \end{eqnarray}
The optimal stopping problem which will serve for our induction argument is:
  \begin{eqnarray}\label{eq:opt_stop_new}
    u^\beta(t,x) 
    &:=& 
    \sup_{\tau \in\T^t} 
    \Eps{x}{v^\xi(t-\tau,Y_\tau) + w^\beta(Y_{\tau}) \1_{\{\tau<t\}}}\quad  t \ge 0, x \in \bI.
    \end{eqnarray}
We also introduce the corresponding stopping region
  \begin{eqnarray}\label{eq:Rmu_definition_new}
    \R^\beta & := \left\{(t,x) : u^\beta(t,x) = v^\xi(t,x) + w^\beta(x)\right\} ,
  \end{eqnarray}
and we set
   \begin{equation}
    \sigma^{\xi^\beta} 
    := \inf \{t \geq \sx: (t, X_t) \in \R^\beta\},
    ~\mbox{and}~
    \xi^\beta[A] 
    := \p^\xi\big[(\sigma^{\xi^\beta},X_{\sigma_{\R^\beta}}) \in A\big]
    ~\mbox{for all}~A \in \mathcal{B}(\Ss).
  \end{equation}

\begin{theorem}\label{thm:main_general_start}
Let $\sx\in\T$ with corresponding time-space distribution $\xi$, and $\beta$ an integrable measure such that $\beta \succeq_{\text{cx}} \ax$. Then $\sxb$ is a UI stopping time embedding $\beta$ and  $u^\beta = v^{\xi^\beta}$.  Moreover, $\R^\beta$ is a $\xi^\beta$-regular barrier.
\end{theorem}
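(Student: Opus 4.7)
The plan is to reduce the three assertions (embedding of $\beta$, uniform integrability of $\sxb$, and $\xi^\beta$-regularity of $\R^\beta$) to the single identification $u^\beta = v^{\xi^\beta}$, from which all conclusions follow by soft arguments about potentials. I would organise the argument in three conceptual steps.

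\textbf{Step 1 (variational properties and barrier structure of $\R^\beta$).} First, standard optimal stopping theory applied to \eqref{eq:opt_stop_new}, combined with continuity of $v^\xi$ and $w^\beta$, gives that $u^\beta$ is continuous on $\Ss$, that $u^\beta \ge v^\xi + w^\beta$ with equality exactly on $\R^\beta$, and that the optimal stopping rule in the definition of $u^\beta(t,x)$ is the backward first entry $\tau^{t,x} := \inf\{s\ge 0 : (t-s, Y_s) \in \R^\beta\} \wedge t$. I would then verify that $\R^\beta$ is genuinely a barrier: the key point is to show that $u^\beta - v^\xi$ is non-decreasing in $t$, so that once $u^\beta - v^\xi = w^\beta$ holds at $(t,x)$ it persists for all larger $t$. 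This monotonicity comes from the fact that increasing $t$ both enlarges $\T^t$ and extends the interval on which the (non-positive) bonus $w^\beta(Y_\tau)$ is collected; the sign $w^\beta \le 0$, which is granted by the convex ordering $\ax \preceq_{\text{\rm cx}} \beta$, is crucial here.

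\textbf{Step 2 (identification $u^\beta = v^{\xi^\beta}$ via time-reversal --- the main obstacle).} The heart of the proof is a time-reversal argument relating the backward optimal stopping problem to the forward hitting problem. On one side, Itô--Tanaka applied to $|X_{t\wedge \sxb} - x|$ combined with the local-time identity \eqref{eq:1} expresses $v^{\xi^\beta}(t,x) - v^\xi(t,x)$ as a space-time integral, against the transition density $p(s,y,x)$, of the expected local time accumulated between $\sx$ and $\sxb$. On the other side, the optimal-stopping representation of $u^\beta - v^\xi$ can be rewritten via the strong Markov property as an analogous integral against $p$, but with the roles of the initial and terminal spatial points swapped; the time-homogeneity of the diffusion and the symmetry $\eta(x)^2 p(s,y,x)\,dx\,dy\,ds$ identify the two integrals on $\R^\beta$. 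A uniqueness argument for the variational inequality --- both $u^\beta - v^\xi$ and $v^{\xi^\beta} - v^\xi$ solve the same obstacle problem with obstacle $w^\beta$ and vanish at $t=0$ on $\iI$ --- then extends equality to all of $\Ss$. The technical difficulty, addressed in Sections~\ref{sect:stoppedpotential}--\ref{sect:finiteatoms}, is that with only integrability of the marginals one has to first establish the identification for locally finitely supported measures and then extend by a careful approximation procedure (Section~\ref{sect:approxatom}).

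\textbf{Step 3 (embedding, UI, and regularity).} Once $u^\beta = v^{\xi^\beta}$ is in hand the remaining claims are routine. Letting $t\to\infty$ in \eqref{eq:opt_stop_new}, and using $v^\xi(t,\cdot)\searrow U^{\ax}$ from \eqref{vxi0} together with continuity of $w^\beta$, one obtains $u^\beta(\infty, x) = U^{\ax}(x) + w^\beta(x) = U^\beta(x)$. On the other side, $v^{\xi^\beta}(t,x)\searrow -\e^{\mu_0}|X_{\sxb}-x|$ provided $(X_{t\wedge \sxb})_t$ is uniformly integrable, which itself follows from the bound $v^{\xi^\beta}(t,\cdot) \ge U^\beta$ and the integrability of $\beta$. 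Matching the two limits shows that the potential of $X_{\sxb}$ equals $U^\beta$, so by injectivity of the potential map $X_{\sxb}\sim \beta$, and UI is confirmed in the process. Finally, for $\xi^\beta$-regularity, the definition $\R^\beta = \{u^\beta = v^\xi + w^\beta\}$ is automatically maximal: any strictly larger barrier $\tilde\R \supsetneq \R^\beta$ with the same stopping law under $\p^\xi$ would, by Step~1, force $u^\beta$ to coincide with its obstacle on new points, contradicting optimality of $\tau^{t,x}$; alternatively one appeals directly to Lemma~\ref{lem:unique}.
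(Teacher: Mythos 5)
Your three-step outline does mirror the overall architecture of the paper (establish structural properties of $u^\beta$ and $\R^\beta$; prove the identification $u^\beta=v^{\xi^\beta}$; extract embedding/UI/regularity as soft corollaries via Lemma~\ref{lem:u-beta-second-prop} and Remark~\ref{rem:regularity}), and you correctly locate the time-reversal symmetry $\e^x\big[L^y_{t\wedge H_{a,b}}\big]=\e^y\big[L^x_{t\wedge H_{a,b}}\big]$ (Lemma~\ref{lem:box}) as the crucial probabilistic input for Step~2. Two minor points first. In Step~1 you assert $u^\beta-v^\xi$ is \emph{non-decreasing} in $t$; this has the sign backwards. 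The paper proves (Lemma~\ref{lem:opt-stop-prop}~\ref{item:u-less-b-decreasing}) that $u^\beta-v^\xi$ is \emph{non-increasing}, and it is precisely this, together with the uniform lower bound $u^\beta-v^\xi\ge w^\beta$, that forces $u^\beta-v^\xi$ to stay pinned at the obstacle once it reaches it — your stated direction would give nothing. The heuristic ``enlarging $\T^t$ raises the supremum'' concerns only $u^\beta$ and ignores that $v^\xi$ also decreases in $t$; the actual proof needs the supermartingale comparison for $V^{t'}-V^t$ from Lemma~\ref{lem:vxi supermart}. In Step~3, the uniform integrability claim is not ``confirmed in the process'': the paper deduces it from the identity $\Eps{\mu_0}{L^x_{\sxb}}=U^{\mu_0}(x)-U^\beta(x)$ via a result of Elworthy--Li--Yor, which is a genuine additional ingredient.

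The serious gap is in the closing move of Step~2, where you propose to finish by uniqueness for the variational inequality applied to $u^\beta-v^\xi$ and $v^{\xi^\beta}-v^\xi$. This is both circular and contrary to the paper's approach. To even assert that $v^{\xi^\beta}-v^\xi$ solves the \emph{same} obstacle problem, one must know a priori that $v^{\xi^\beta}(t,\cdot)\ge v^\xi(t,\cdot)+w^\beta(\cdot)$, equivalently (sending $t\to\infty$) that $U^{\alpha^{\xi^\beta}}\ge U^\beta$, which together with the Fatou inequality $U^{\alpha^{\xi^\beta}}\le U^\beta$ from Lemma~\ref{lem:u-beta-second-prop} \emph{is} the embedding — the very statement under proof. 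Moreover, even granting a well-posed VI, the whole point of Theorem~\ref{thm:main_general_start} is to dispense with that route (which is that of \cite{CoxWang:11,GassiatOberhauserdosReis} and requires nontrivial analytic work to make precise in the present generality, where $\xi$ is an arbitrary time-space law and measures are merely integrable). What the paper actually does with the time-reversal symmetry is quite different: Lemma~\ref{lem:new_box} propagates the equality $u^\beta=v^{\xi^\beta}$ forward in time across a single ``box'' $[t_0,\infty)\times[a,b]$ flanked by two rays of the barrier, by rewriting the three increments \eqref{eq:15}--\eqref{eq:20} and using Lemma~\ref{lem:box} to swap the roles of the forward and backward local times. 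This is then bootstrapped by an induction on the number of atoms of $\beta$ (Lemma~\ref{lem:induction}), extended to locally finitely supported measures by an increasing approximation in $N$ (Lemma~\ref{prop:2}, Proposition~\ref{prop:LocFinSupp}), and finally to general $\beta$ by a second, decreasing-in-convex-order dyadic approximation (Lemmas~\ref{lem:1}, \ref{lem:finalequality}). None of these stages can be replaced by a single VI-uniqueness invocation; your Step~2 as written would therefore need to be rebuilt from scratch along these lines.
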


We now show that Theorem~\ref{thm:mult_marg_main} follows from Theorem \ref{thm:main_general_start}.

\begin{proof}[Proof of Theorem~\ref{thm:mult_marg_main}]
Consider the first marginal. Let $\xi=\delta_{0}\otimes\mu_0$ so that $\sx = 0$, $\ax=\mu_0$, and let $\beta = \mu_1$. Then $v^\xi(t,x) = \pmu{0}(x)$ and $u^\beta,\R^\beta$ in \eqref{eq:opt_stop_new}--\eqref{eq:Rmu_definition_new} are equal to, respectively, $u^1,\R^1$ in \eqref{eq:udefn}-\eqref{eq:Rk}. It follows from Theorem \ref{thm:main_general_start} that the stopping time $\sigma_1 =\sxb$ induced by $\R^1 = \R^\beta$ is a UI stopping time solving {\rm SEP}$(\bmu_1)$ and $u^1=u^\beta=v^{\xi^\beta}$, as required. 
We next iterate the arguments. Given the UI stopping time $\sigma_{k}$ from the $k^{\text{th}}$ step with its space-time measure $\xi$ we know that $v^\xi=u^k$ so that, with $\beta=\mu_{k+1}$, we have $u^\beta=u^{k+1}$ and $\R^\beta=\R^{k+1}$. Applying Theorem~\ref{thm:main_general_start} we get that $\sigma_{k+1}$ embeds $\mu_{k+1}$, is UI and $v^{\xi^\beta}=u^{k+1}$ as required. The proof finishes after $n$ iterations.
\end{proof}

The rest of this paper is dedicated to the proof of Theorem \ref{thm:main_general_start}. The following result isolates the main steps needed for this.
 
 \begin{lemma}\label{lem:u-beta-second-prop}
 Let $\sx\in\T$ with corresponding time-space distribution $\xi$, and $\ax \preceq_{\text{\rm cx}} \beta$. Assume further that $u^\beta(t,.)\longrightarrow U^\beta$, pointwise as $t\nearrow\infty$, and $u^\beta= v^{\xi^\beta}$. Then, $\sxb$ is a UI stopping time embedding $\beta$.
\end{lemma}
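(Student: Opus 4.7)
Put $M_t := X_{t\wedge\sxb}$, a $\p^{\mu_0}$-martingale with $M_0\sim\mu_0$. Directly from the definition of $v^{\xi^\beta}$,
\[
v^{\xi^\beta}(t,x)
\;=\;
-\e^{\mu_0}\bigl[|M_t - x|\bigr]
\;=\;
U^{\mathcal{L}(M_t)}(x),
\]
so the two hypotheses $u^\beta = v^{\xi^\beta}$ and $u^\beta(t,\cdot)\to U^\beta$ pointwise combine into pointwise convergence of the potentials of $\nu_t := \mathcal{L}(M_t)$ to $U^\beta$ on $\bI$. Evaluating at $x=0$ gives $\e^{\mu_0}|M_t|\to\int|y|\beta(\td y)<\infty$, which is monotone in $t$ (since $|M_t|$ is a submartingale); hence $\sup_t \e^{\mu_0}|M_t| < \infty$, and by Doob's martingale convergence theorem $M_t \to M_\infty$ $\p^{\mu_0}$-a.s.\ for some integrable $M_\infty$.

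The core step is to upgrade pointwise convergence $U^{\nu_t}\to U^\beta$ to weak convergence $\nu_t\Rightarrow\beta$. Concavity is the right tool: each $U^{\nu_t}$ and the limit $U^\beta$ are concave on $\bI$, so pointwise convergence is automatically locally uniform on $\iI$, and one-sided derivatives converge at every continuity point of the limit. Since the right-derivative of $-U^\mu$ equals $2F_\mu - 1$ at continuity points of $F_\mu$, we obtain $F_{\nu_t}(x) \to F_\beta(x)$ at every continuity point of $F_\beta$, which is precisely weak convergence $\nu_t \Rightarrow \beta$.

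Combining this weak convergence with the $\p^{\mu_0}$-a.s.\ convergence $M_t \to M_\infty$ forces $M_\infty \sim \beta$. In particular $\e^{\mu_0}|M_\infty| = \int|y|\beta(\td y) = \lim_t \e^{\mu_0}|M_t|$; together with $|M_t|\to|M_\infty|$ a.s.\ and Scheffe's lemma, this gives $|M_t| \to |M_\infty|$ in $L^1$, hence uniform integrability of $(M_t)$ and so $\sxb \in \T$. Identifying $X_{\sxb}$ with $M_\infty$ is immediate on $\{\sxb<\infty\}$, while on $\{\sxb=\infty\}$ the standing assumption on the endpoints guarantees that $X_t$ has an a.s.\ limit in $\partial\bI$, which we take as the value of $X_{\sxb}$. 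Thus $X_{\sxb}\sim\beta$.

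The principal obstacle I expect is the middle step: extracting weak convergence from pointwise convergence of potentials without any a~priori UI input. A naive tightness-and-uniqueness argument is circular, because identifying a weak subsequential limit $\rho$ with $\beta$ via $U^\rho=U^\beta$ requires passing the limit through $\int|x-y|\,\nu_t(\td y)$, which in turn needs uniform integrability of $\{\nu_t\}$ — precisely what is being proved. The concavity of the potentials sidesteps this by directly yielding convergence of the associated distribution functions at continuity points, from which the UI conclusion then follows cleanly via Scheffe.
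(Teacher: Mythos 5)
Your proof is correct, but it takes a genuinely different route from the paper's. The paper works entirely at the potential-function level: it applies Fatou's lemma to the potentials to get $U^\beta \le U^{\alpha^{\xi^\beta}}$ (so $\alpha^{\xi^\beta}$ is integrable), compares asymptotic behaviour of the potentials to match the means, and then — and this is the decisive trick — uses dominated convergence on the \emph{differences} $|X_{t\wedge\sxb}-y|-|X_{t\wedge\sxb}-x|$, which are uniformly bounded by $|x-y|$, to show $U^\beta(x)-U^\beta(y)=U^{\alpha^{\xi^\beta}}(x)-U^{\alpha^{\xi^\beta}}(y)$ for all $x,y$; the constant is identified as zero from boundary behaviour, and $\alpha^{\xi^\beta}=\beta$ follows. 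UI is then delegated to \cite[Corollary~3.4]{Elworthy:1999aa} after a Tanaka-formula rewrite. Your argument instead extracts the a.s.\ martingale limit $M_\infty$ via Doob (using that $\e^{\mu_0}|M_t|$ increases to the finite limit $\int|y|\,\beta(\td y)$), upgrades the pointwise potential convergence to weak convergence $\nu_t\Rightarrow\beta$ by the convexity of $-U^{\nu_t}$ (one-sided derivatives $2F_{\nu_t}-1$ converge at continuity points of $F_\beta$), identifies $\mathcal{L}(M_\infty)=\beta$ by uniqueness of distributional limits, and closes with Scheff\'e's lemma applied to $|M_t|\to|M_\infty|$. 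The trade-off: the paper sidesteps having to name the a.s.\ limit and sidesteps weak convergence entirely (the Lipschitz bound on the differences does all the work), but leans on an external UI criterion; your route is more self-contained (UI falls out of Scheff\'e) and more plainly probabilistic, at the modest cost of the convex-analysis detour. Your remark at the end correctly identifies why a naive tightness-plus-identification argument would be circular and why concavity resolves it; one should also note that the CDF-convergence criterion gives weak convergence of probability measures precisely because $\beta$ has finite first moment and hence puts no mass at $\pm\infty$, which quietly disposes of the tightness issue.
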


\begin{proof}
From the assumptions and the definition of $v^{\xi^\beta}$ we obtain
 $$ 
 -U^\beta(x)
 =
 -\lim_{t\to\infty} v^{\xi^\beta}(t,x)
 =\lim_{t\to\infty} \e^{\mu_0}\big[ |X_{t\land \sigma^{\xi^\beta}}-x| \big]
 \ge 
 \e^{\mu_0}\big[ |X_{\sigma^{\xi^\beta}}-x| \big]
 =
 -U^{\alpha^{\xi^\beta}}(x),
$$
where the inequality follows from Fatou's Lemma. This in particular implies that $\alpha^{\xi^\beta}$ is an integrable probability measure on $\bI$, $U^{\alpha^{\xi^\beta}}(x) >-\infty$ for all $x \in \bI$, and $U^{\xi^\beta}(x)-|x-m^{\alpha^{\xi^\beta}}|\longrightarrow 0$ as $x\to \partial \bI$, where $m^{\alpha^{\xi^\beta}}:=\int x \alpha^{\xi^\beta}(dx)$. Since also $U^{\xi^\beta}(x)-|x-m^\beta|\longrightarrow 0$ as $x\to \partial \bI$, we deduce from the above inequality  $U^\beta \le U^{\alpha^{\xi^\beta}}$ that $m^{\alpha^{\xi^\beta}}=m^\beta$, and therefore $|U^{\alpha^{\xi^\beta}}(x)-U^\beta(x)|\to 0$ as $x\to \partial \bI$. Then for $x,y \in \bI$, it follows from the dominated convergence theorem that
\begin{align*}
  U^\beta(x) - U^\beta(y) 
  & = \lim_{t\to\infty} \left[ v^{\xi^\beta}(t,x)-v^{\xi^\beta}(t,y)\right]\\
  & =\lim_{t\to\infty} \e^{\mu_0}\big[ |X_{t\land \sigma^{\xi^\beta}}-y| -|X_{t\land \sigma^{\xi^\beta}}-x|\big] \\
  & = \e^{\mu_0}\big[ |X_{\sigma^{\xi^\beta}}-y|- |X_{\sigma^{\xi^\beta}}-x|\big]\\
  & = U^{\alpha^{\xi^\beta}}(x) - U^{\alpha^{\xi^\beta}}(y).
\end{align*}
In particular, $U^\beta(x) = U^{\alpha^{\xi^\beta}}(x)+c$ for some $c \in \re$, for all $x \in \bI$, and by the above, sending $x\to \partial \bI$, we see that $c=0$. We conclude that $\alpha^{\xi^\beta}=\beta$, i.e. $X_{\sigma^{\xi^\beta}}\sim\beta$, which is the required embedding property. Moreover, it follows from the Tanaka formula together with the monotone convergence theorem that
\begin{eqnarray*}
U^\beta(x)
&=&
U^{\alpha^{\xi^\beta}}(x)
 \;=\;
 -\e^{\mu_0}\big[ |X_{\sigma^{\xi^\beta}}-x|
                     \big]
 \;=\;
 U^{\mu_0}(x) - \e^{\mu_0}\big[L^x_{\sigma^{\xi^\beta}}\big],
  ~~\mbox{for all}~~
  x\in\bI.
\end{eqnarray*}
The uniform integrability of the stopping time $\sxb$ now follows from \cite[Corollary~3.4]{Elworthy:1999aa}.
\end{proof}

The pointwise convergence of $u^\beta(t,.)$ towards $U^\beta$, as $t\to\infty$ will be stated in Lemma \ref{lem:opt-stop-prop} \ref{item:v-xi-u-beta-ineq}, while the equality $u^\beta=v^{\xi^\beta}$ is more involved and will be shown through a series of results, see Lemma \ref{lem:finalequality}. % The last proof uses the following characterization of UI stopping times.

% \begin{theorem}
%   \label{thm:MinLocalTime}
% Let $\mu_0\preceq_{\text{\rm cx}}\beta$ be two integrable probability measures supported on $\bI$, and let $\tau$ be a stopping time with $X_{\tau} \sim_{\p^{\mu_0}} \beta$. Then, there is a constant $c \in [0,\infty]$ such that
%   \begin{eqnarray*}
%     \Eps{\mu_0}{L_{\tau}^x} 
%     &=& 
%     c+(U^{\mu_0}-U^{\beta})(x)
%     ~~\mbox{for all}~~x \in \bI.
%   \end{eqnarray*}
% Moreover, $(X_{t \wedge \tau})_{t\ge 0}$ is UI if and only if $c = 0$.
% \end{theorem}

% \begin{proof}
% See Appendix \ref{appendix:potential}.
% \end{proof}

\begin{remark}\label{rem:ubeta-vxi}
We have $u^\beta=v^{\xi^\beta}$ if and only if $(v^\xi-u^\beta)(t,x)=\e^\xi\big[L^x_{t\wedge\sigma_{\R^\beta}}\big]$, for all $t\ge 0,x\in\bI$. Indeed, by the Tanaka formula, 
\begin{eqnarray*}
  v^{\xi^\beta}(t,x)
  &=&
  U^{\mu_0}(x)
  -\Eps{\mu_0}{L_{t \wedge \sxb}^x} 
  \;=\; 
  v^\xi(t,x) - \Eps{\mu_0}{L_{t \wedge \sxb}^x-L_{t \wedge \sx}^x}.
\end{eqnarray*}
Recalling that, under $\p^\xi$, $\srb = \inf \{t > \Tx: (t,X_t) \in \R^\beta\}$, and (under $\p^{\mu_0}$), $\sxb = \inf\{t >\sx: (t,X_t) \in \R^\beta\}$. Recall that, under $\p^\xi$, the local time is set to $L_t^x = 0$ for $t \le \Tx$, by convention. Then from the strong Markov property, we have $\Eps{\mu_0}{L_{t \wedge \sxb}^x-L_{t \wedge \sx}^x} = \Eps{(\sx,X_{\sx})}{L_{t \wedge \srb}^x} = \Eps{\xi}{L_{t \wedge \srb}^x}$, and therefore:
\begin{eqnarray}\label{vxibeta-vxi}
  v^{\xi^\beta}(t,x)
  &=&
  v^\xi(t,x) -\Eps{\xi}{L_{t \wedge \sigma_{\R^\beta}}^x},
 \end{eqnarray}
justifying the claimed equivalence.
\end{remark}

\begin{remark}
  \label{rem:regularity}
  Observe that the regularity of the barrier can now be seen as an easy
  consequence of Lemma~\ref{lem:u-beta-second-prop}. Suppose (in the setting of
  Theorem~\ref{thm:main_general_start}), we have $u^{\beta} = v^{\xi^\beta}$ and
  $u^\beta(t,.) \to U^\beta$ pointwise as $t \to \infty$. From
  \eqref{vxibeta-vxi}, \eqref{vxi0} and applying monotone convergence to $\Eps{\xi}{L_{t
      \wedge \sigma_{\R^\beta}}^x}$ as $t \to \infty$, we deduce that
  \begin{equation*}
    \Eps{\xi}{L_{\sigma_{\R^\beta}}^x} = U^{\alpha^\xi}(x) - U^\beta(x) = -w^\beta(x).
  \end{equation*}
 Now suppose that $(t,x) \not\in \R^\beta$. Then $ \Eps{\xi}{L_{\sigma_{\R^\beta}}^x}=-w^\beta(x)>(v^\xi-u^\beta)(t,x)=(v^\xi-v^{\xi^\beta})(t,x)= \Eps{\xi}{L_{t\wedge\sigma_{\R^\beta}}^x}$, by \eqref{vxibeta-vxi}. In view of Remark \ref{rem:regular lt}, this shows that $\R^\beta$ is $\xi-$regular.
\end{remark}

\section{Stopped potential and the optimal stopping problem}
\label{sect:stoppedpotential}
\setcounter{equation}{0}

\subsection{Properties of the stopped potential function}
\label{sec:prop-barr-meas}

The following lemma provides some direct properties of the stopped potential. Recall the definition $m_{\mu}(t) := \Epsa{\mu}{X_t}$.

\begin{lemma}\label{lem:vxi-immediate}
Let $\sx\in\T$ with corresponding time-space distribution $\xi$. Then, $v^\xi$ is concave and $1$-Lipschitz-continuous in $x$, and non-increasing, and $v^\xi(t,x)$ is (uniformly in $x$) $\frac{1}{2}$-H\"older continuous on $[0,T]$ for all $T>0$. In addition% :
% \begin{equation*}
% |v^{\xi}(t,x)- v^{\xi}(t',x)| \le C |x|   
% \end{equation*}
 \begin{eqnarray*}
   0 &\le& 
       \pmu{0}(x)-v^{\xi}(t,x) 
       \;=\; 
       \Eps{\mu_0}{L_t^x} - \Eps{\xi}{L_t^x}
       \;\le\;
           \sqrt{2C_\eta t} \,m_{\mu_0}(t)\me^{C_\eta t},
 \end{eqnarray*}
and the following identity holds in the distribution sense:
 \begin{eqnarray*}
  \left(\Lc v^\xi\right)(t,dx)
  &=&
  -\int_0^t \eta(x)^2 \, \xi(ds,dx); 
  \quad t \ge 0,~x \in \bI,
  \end{eqnarray*}
by which we mean that, for any stopping time $\sigma\le t$, we have
\begin{align}
  \Eps{x}{v^\xi(t-\sigma,Y_\sigma) } - v^\xi(t,x) & = -\Eps{x}{\int_0^\sigma \eta(Y_s)^2 \, ds\int_0^s \delta_{Y_s}(y)\, \xi(dr, dy)} \label{eq:vxiIdent} \\ & = -\int_0^t \int_\bI q_\sigma(t-s,y) \eta(y)^2 \int_0^s \xi(dr, dy)\, ds \nonumber
\end{align}
where $q_\sigma$ is the space-time density of the process $Y_s$ (started at $t$ and running backwards in time) up to the stopping time $\sigma$.
\end{lemma}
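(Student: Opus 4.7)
The statement bundles three qualitatively different assertions, and I would prove them in the order given, since each builds on the previous.

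For the pointwise regularity of $v^\xi$, concavity and $1$-Lipschitz continuity in $x$ descend from the pointwise properties of $x \mapsto -|X_{t\wedge\sx}(\omega) - x|$ under $\omega$-integration; monotonicity in $t$ follows since $|X_{\cdot \wedge \sx} - x|$ is a submartingale (a convex function of a martingale). For the $\tfrac{1}{2}$-H\"older continuity in $t$, I would bound $|v^\xi(t,x) - v^\xi(s,x)| \le \e^{\mu_0}|X_{t\wedge\sx} - X_{s\wedge\sx}|$, apply Cauchy--Schwarz and the It\^o isometry $\e^y[(X_{t\wedge\sx} - X_{s\wedge\sx})^2] = \e^y[\int_{s\wedge\sx}^{t\wedge\sx}\eta(X_u)^2\,du]$ for each fixed starting point $y$, and close the $L^2$-moment estimate on $X$ via the growth assumption $\eta^2(x) \le C_\eta(1+x^2)$ and a Gr\"onwall argument on $u \mapsto \e^y[X_u^2]$ -- which also yields the finiteness claim $\e^x[X_t^2] < \infty$ promised in the footnote -- before integrating against $\mu_0$.

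For the Tanaka identity and its bound, Tanaka's formula applied to $|X_{\cdot\wedge\sx} - x|$ under $\p^y$ for each $y$ (where the stochastic integral is a true martingale by the $L^2$ control just described) and then integrated in $y \sim \mu_0$ gives $v^\xi(t,x) = U^{\mu_0}(x) - \e^{\mu_0}[L^x_{t\wedge\sx}]$. The strong Markov property at $\sx$, combined with the paper's convention that $L^x_t \equiv 0$ for $t \le \Tx$ under $\p^\xi$, yields the decomposition $\e^{\mu_0}[L^x_t] = \e^{\mu_0}[L^x_{t\wedge\sx}] + \e^\xi[L^x_t]$, which is precisely the first equality of the claim. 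The upper bound then reduces to the pointwise estimate $\e^{\mu_0}[L^x_{t\wedge\sx}] = \e^{\mu_0}|X_{t\wedge\sx} - x| - \e^{\mu_0}|X_0 - x| \le \e^{\mu_0}|X_{t\wedge\sx} - X_0|$ (using $|a-x| - |b-x| \le |a-b|$), followed by the same Cauchy--Schwarz plus Gr\"onwall machinery applied pointwise in the starting point and finally integrated against $\mu_0$, bringing in the factor $m_{\mu_0}$.

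The distributional identity for $\Lc v^\xi$ is where the main difficulty lies, because $v^\xi$ is not $C^{1,2}$: its second spatial derivative is a signed measure supported on the spatial projections of $\xi$ together with $\supp\mu_0$, so It\^o cannot be invoked directly. My approach would be to substitute the local-time representation \eqref{eq:1} into the Tanaka formula above to obtain
\[ v^\xi(t,x) = U^{\mu_0}(x) - \eta(x)^2\!\!\int_0^t\!\!\int p(u,y,x)\mu_0(dy)\,du + \eta(x)^2\!\!\int_{\{r\le t\}}\!\!\int_0^{t-r}\!\! p(u,z,x)\,du\,\xi(dr,dz), \]
and then compute $\e^x[v^\xi(t-\sigma,Y_\sigma)]$ by disintegrating along the sub-probability density $q_\sigma$ of the backward process $Y$ killed at $\sigma$. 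Chapman--Kolmogorov together with the reversibility of $p$ with respect to the speed measure produce an exact cancellation between the $U^{\mu_0}$-term and the $\mu_0$-term -- encoding that $U^{\mu_0}$ is harmonic for the backward semigroup away from $\supp\mu_0$ -- leaving only the $\xi$-contribution, which a Fubini rearrangement recasts in both displayed forms of the right-hand side. I would handle the singularity of $p$ at $u = 0^+$ and justify the exchanges of integration by first mollifying $\mu_0$ and $\xi$, establishing the identity for smooth inputs where the classical backward Kolmogorov equation applies, and then passing to the limit using the uniform bounds from Part 2.
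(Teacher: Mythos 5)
Your plan for the pointwise regularity and the Tanaka identity with its bound closely tracks the paper's proof: the decomposition $v^\xi(t,x) = U^{\mu_0}(x) - \e^{\mu_0}[L^x_{t\wedge\sx}]$ via Tanaka and strong Markov, the Gr\"onwall estimate on $\e^y[(X_t-y)^2]$ giving both $\e^x[X_t^2]<\infty$ and the $\sqrt{t}$-bound on $\e^y[L^x_t]$, and the resulting $\tfrac12$-H\"older continuity are all the same ingredients. For the distributional identity, you take a genuinely different route: the paper first establishes $\Lc v^\xi(t,dx) = -\int_0^t\eta(x)^2\xi(ds,dx)$ by computing $\Lc\Eps{\nu}{L^x_t}$ via the Kolmogorov forward equation, and only then applies It\^o; you instead expand $\e^x[v^\xi(t-\sigma,Y_\sigma)] - v^\xi(t,x)$ term by term and exploit the cancellation between the $U^{\mu_0}$-term and the $\e^{\mu_0}[L^x_t]$-term. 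That cancellation is correct, and in fact it only needs the symmetry $\e^x[L^y_t] = \e^y[L^x_t]$ (from reversibility of $p$ with respect to speed measure -- the same fact the paper calls Hunt's switching identity in Lemma~\ref{lem:vxi supermart}) plus the strong Markov property of $Y$, and it directly leaves $\int\xi(ds,dy)\1_{s\le t}\e^x[L^y_{\sigma\wedge(t-s)}]$, which Fubini turns into the announced form. This is arguably a cleaner route because it need not invoke the Kolmogorov equations at all.

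The gap is in your proposed justification step. You write that mollifying $\mu_0$ and $\xi$ puts you in a regime "where the classical backward Kolmogorov equation applies." This misplaces the obstruction: the regularity of the transition density $p$ needed for the Kolmogorov equations comes from the regularity of the diffusion coefficient $\eta$, which the paper assumes only locally Lipschitz; smoothing the measures $\mu_0,\xi$ does nothing for $p$. The paper handles this by a separate, two-stage mollification: it first mollifies $\eta$ to a sequence $\eta_n$ of smooth coefficients, solves the corresponding SDEs, and uses uniform $\sup$-convergence of the solutions plus the uniform Lipschitz/H\"older bounds on $v^{\xi,n}$ and Arzel\`a-Ascoli to pass to the limit; only afterwards does it mollify $\xi$. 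Your plan omits the $\eta$-mollification entirely and, as stated, leaves a hole whenever you actually lean on the Kolmogorov equation. If you commit fully to the reversibility--Markov cancellation (which holds for any regular diffusion without extra smoothness of $\eta$), you can sidestep this; but you would then need to justify the Fubini and the representation \eqref{eq:1} for the killed density $q_\sigma$, and there the mollification of $\xi$ (not $\mu_0$) is the step that actually helps.
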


\begin{proof} 
The definition of $v^\xi(t,x)$ in \eqref{vxi} immediately shows that $v^\xi$ is concave, $1-$Lipschitz in $x$, and non-increasing in $t$. As in Remark \ref{rem:ubeta-vxi} above, using Tanaka's formula and the strong Markov property we obtain
\begin{equation}\label{eq:v_xi_alternative}
\begin{split}
v^\xi(t,x)=&\ U^{\mu_0}(x)-\mathbb{E}^{\mu_0}[L^x_{t\wedge\sx}]= U^{\mu_0}(x)-\mathbb{E}^{\mu_0}[L^x_{t}]+\mathbb{E}^{\mu_0}[(L^x_{t}-L^x_{\sx})\1_{\{\sx\le t\}}]\\
=&\ U^{\mu_0}(x)-\mathbb{E}^{\mu_0}[L^x_{t}]+ \mathbb{E}^{\xi}[L^x_{t}]\\
=&\ U^{\mu_0}(x)-\mathbb{E}^{\mu_0}[L^x_{t}]    +\int_{[0,t]\times\bI} \Eps{y}{L_{t-s}^x}\, \xi(ds,dy).
\end{split}
\end{equation}

We now consider continuity properties of $\Eps{y}{L_t^x}$. First observe that, by the martingale property of $X_t$, we have
\begin{equation*}
  \Eps{y}{\left( X_t-x\right)^2} = \Eps{y}{\left( X_t-y\right)^2} + (x-y)^2.
\end{equation*}
Using the fact that $\eta(x)^2 \le C_\eta (1+|x|^2)$ and the martingale property of $X$, we deduce
\begin{align*}
  \Eps{y}{\left( X_t-y\right)^2}
  & \le \Eps{y}{\int_0^t \eta(X_s)^2 \, ds}\\
  & \le C_\eta\Eps{y}{\int_0^t \left(1 + |X_s|^2\right) \, ds}\\
  & \le C_\eta \left( t + 2\Eps{y}{\int_0^t \left[\left(X_s-y\right)^2 + y^2\right] ds}\right)\\
  & \le 2C_\eta \left(t(1+y^2) + \Eps{y}{\int_0^t \left( X_s -y\right)^2 ds}\right),
\end{align*}
where the first inequality follows via localisation and limiting argument using Fatou's lemma and monotone convergence. It now follows by  Gr\"onwall's lemma that
\begin{align*}
  \Eps{y}{\left( X_t-y\right)^2}
  & \le 2C_\eta (1+y^2) t \me^{2C_\eta t},
\end{align*}
from which we deduce that
\begin{equation}
  \label{eq:LocTimBound}
  \Eps{y}{L_t^x} = \Epsa{y}{X_t-x} -|x-y| \le \sqrt{\Eps{y}{\left(X_t-x\right)^2}} - |x-y| \le \sqrt{2C_\eta t}\, (1+|y|)\me^{C_\eta t}.
\end{equation}
Writing $\Eps{\mu_0}{L_{t'}^x} - \Eps{\mu_0}{L_{t}^x}= \Eps{\mu_0}{\Eps{X_t}{L_{t'-t}^x}}$ for $t<t'\le T$, we see that
\begin{align*}
  \Eps{\mu_0}{L_{t'}^x} - \Eps{\mu_0}{L_{t}^x} \le \sqrt{2C_\eta (t'-t)} \, (1+m_{\mu_0}(t)) \me^{C_\eta (t'-t)} \le \sqrt{2C_\eta (t'-t)}\, (1+m^{\mu_0}(T)) \me^{C_\eta (t'-t)}
\end{align*}
and we deduce that $v^{\xi}(x,t)$ is $\half$-H\"older continuous on $[0,T]$.
Equation \eqref{eq:LocTimBound} also provides the inequality 
\begin{equation*}
v^{\xi}(t,x) \ge \pmu{0}(x) - \Eps{\mu_0}{L_t^x} \ge \pmu{0}(x) - \sqrt{2C_\eta t}\,(1+m_{\mu_0}(t))\me^{C_\eta t}.
\end{equation*}
% = -\Epsa{\mu_0}{X_t-x}\ge -\Epsa{\mu_0}{X_0-x}-\Epsa{\mu_0}{X_t-X_0}=U^{\mu_0}(x)-\sqrt{t}\,\mathbb{E}^0|X_1|.$$

It remains to compute $\Lc v^\xi$. First, since $v^\xi$ is non-increasing in $t$ and concave in $x$, the partial derivatives $\partial_tv^\xi$ and $D^2v$ are well-defined as distributions on $\bI$, so $\Lc v^\xi$ makes sense in terms of measures. 

We first consider the case where $\eta$ is suitably differentiable (say smooth). Note that by a monotone convergence argument, we can restrict to the case where $Y$ remains in a compact subinterval of $\iI$ up to $\sigma$, and hence is bounded. Let $p(t,x,y)$ be the transition density for the diffusion and recall that $\Eps{y}{L^x_t}=\eta(x)^2\int_0^t p(r,y,x)dr$. It follows that for an arbitrary starting measure $\nu$, we have $\Eps{\nu}{L_t^x} = \int \nu(dy) \eta(x)^2\int_0^t p(r,y,x) d r$, and we directly compute (using Kolmogorov's Forward Equation, which holds due to the smoothness assumption on $\eta$) that 
  \begin{eqnarray*}
    \Lc \Eps{\nu}{L_t^x} 
    &=& 
    \!\!\!
    \int \nu(dy) \Big(\!-\eta(x)^2 p(t,y,x) \!+\! \eta(x)^2 \int_0^t \half D^2\left(\eta(x)^2 p(r,y,x)\right) \, d r\Big) 
    \\
    &=& 
    \!\!\!
    \int \!\nu(dy) \eta(x)^2 \Big(\!-p(t,y,x) \!+\!\int_0^t \!\partial_t p(r,y,x) \, d r\Big)\\
    &=& 
    -\eta(x)^2\int \!\nu(dy) p(0,y,x) 
    = 
    -\eta(x)^2\nu(dx) 
    = 
    \half \eta(x)^2 D^2 U^\nu(dx).
  \end{eqnarray*}
Suppose in addition that $\xi$ has a smooth density with respect to Lebesgue measure (which we also denote by $\xi$). We then compute from \eqref{eq:v_xi_alternative} and the equation above that
  \begin{eqnarray*}
    \Lc v^\xi(t,dx) 
    &=&
    \Lc\int_0^t\int_{\bI} \mathbb{E}^{s,y}\big[L^x_t\big]\xi(s,y) \, ds \, dy
    \;=\;
    \Lc\int_0^t\int_{\bI} \mathbb{E}^{y}\big[L^x_{t-s}\big]\xi(s,y) \, ds \, dy
    \\
    &=&
    -\int_0^t\int_{\bI} \eta(x)^2\delta_{\{y\}}(dx) \xi(s,y) \, ds \, dy
    \;=\;
    -\int_0^t \eta(x)^2 \xi(s,x) \, ds \, dx.
  \end{eqnarray*}
  Applying It\^o's lemma, we see that
  \begin{align}
    \Eps{x}{v^\xi(t-\sigma,Y_\sigma)} & = v^\xi(t,x) - \Eps{x}{\int_0^\sigma \int_0^s \eta(Y_s)^2 \xi(r,Y_s) \, dr \, ds} \label{eq:YIto}\\
    & = v^\xi(t,x) -\int_0^t \int_\bI q_\sigma(t-s,y) \eta(y)^2 \int_0^s \xi(dr, dy) \, ds. \nonumber
  \end{align}
  
  We now argue that our results hold for an arbitrary, locally Lipschitz function $\eta$. Keeping $\xi$ fixed as above, with a smooth density, let $\eta_n$ be a sequence of Lipschitz functions obtained from $\eta$ by mollification. Note that since we are on a compact interval, $\eta$ and hence $\eta_n$ are all bounded and from the mollification, we may assume that there exists $K$ such that $\eta, \eta_n$ are all $K$-Lipschitz; moreover $\xi$ is bounded on the corresponding compact time-space set. 

Write $Y^n$ for the solution to the SDE $dY^n_t = \eta^n(Y_t^n) \, dW_t$, and note in particular, by standard results for SDEs (e.g.~\cite[Theorem~V.4.15]{Protter:2005aa}) that $\sup_{r \in [0,t]} |Y^n_r-Y_r| \to 0$ almost surely (possibly after restricting to a subsequence), and in $\Lc^1$ as $n \to \infty$. Hence, by bounded convergence, we get convergence of the corresponding expectations on the right-hand side of \eqref{eq:YIto}, as $n\to \infty$. In addition, writing $v^{\xi,n}$ for the functions corresponding to the diffusions $Y^n$, we see from the first half of the proof that the functions $v^{\xi,n}, v^\xi$ are 1-Lipschitz in $x$, and uniformly H\"older continuous in $t$, for some common H\"older coefficient. It follows from the Arzel\`a-Ascoli theorem that $v^{\xi,n}$ converge uniformly (possibly down a subsequence) to $v^\xi$. We deduce that \eqref{eq:vxiIdent} holds for general $\eta$ and smooth $\xi$.

Finally, approximating the measure $\xi$ by smooth measures through a mollification argument, and observing that the local times for the diffusion are jointly continuous in $x$ and $t$ (by \eqref{eq:1} and the discussion preceeding this equation) we conclude that we can pass to the limit on the right-hand side of \eqref{eq:v_xi_alternative}, and hence on the left-hand side of \eqref{eq:vxiIdent}. On the other hand, when $q_\sigma$ is continuous, we can also pass to the limit on the right-hand side of \eqref{eq:vxiIdent}. Moreover we can approximate $\sigma$ by a sequence of stopping times $\sigma^n \searrow \sigma$ such that $q_{\sigma^n}$ has a continuous density, and this gives us the required result after a monotone convergence argument.  \end{proof}

% \begin{remark} \label{rem:ItoInDist}
%   We will typically want to use the final statement of Lemma~\ref{lem:vxi-immediate} to apply a form of It\^o's Lemma. We justify this application here. First note that, by a localisation argument, we can restrict to a compact sub-interval of $\iI$. Then if $\eta$ is smooth and $\mu_0$ and $\xi$ have smooth densities, then it follows from \eqref{eq:v_xi_alternative} and the smoothness of the transition density that $v^\xi$ is also smooth, and moreover that the final statement of the lemma holds pointwise, and in particular we may apply It\^o's Lemma to get
%   \begin{equation} \label{eq:ItoInDist}
%     v^\xi(t,x) = \Eps{x}{v^{\xi}(\zeta, X_\zeta)} = \Eps{x}{\int_0^\zeta \Lc v^\xi(t,X_t) \, dt} =  \int_0^\infty p_\zeta(t,x,y) \eta(y)^2\left(-\int_0^t\xi(ds,dy)\right) \, dt
%   \end{equation}
%   where $p_\zeta(r,x,y) dy = \p^x(X_r \in dy, r < \zeta)$, for some (bounded) stopping time $\zeta$.

% Now consider the general case, and let $\eta_n, \mu_n, \xi_n$ be a sequence of smooth functions/measures converging uniformly/weakly to $\eta, \mu_0, \xi$ respectively. Then by \cite[Theorem~V.10]{Protter:2005aa}, we know that the diffusion $X^n$ corresponding to $\sigma^n$ converges to $X$ suitably, and hence their densities converge. We deduce that \eqref{eq:ItoInDist} holds for the limit as well.  
% \end{remark}

For the next statement, we introduce the processes
 \begin{eqnarray}\label{Vt}
 V^t:=\big\{V^t_s:=v^{\xi}(t-s,Y_s), s \in [0,t] \big\},
 &t\in[0,\infty],&
 \end{eqnarray}
where $V^\infty$ is defined through $v^\xi(\infty,x)=U^{\ax}(x)$ as in \eqref{vxi0}, i.e.\ $V^{\infty}_s=U^{\ax}(Y_s)$.

\begin{lemma} \label{lem:vxi supermart}
Let $\sx\in\T$ with corresponding time-space distribution $\xi$. Then the processes $V^t$ and $V^{t'}-V^t$ are $\p^x$-supermartingales for all $t\le t' \le\infty$, and $x\in \bI$.
\end{lemma}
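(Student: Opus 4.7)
The plan is to exploit the distributional identity $\Lc v^\xi(u,dx) = -\int_0^u \eta(x)^2\,\xi(ds,dx) \le 0$ from Lemma~\ref{lem:vxi-immediate}, in its Dynkin/It\^o-type form \eqref{eq:vxiIdent}, together with the Markov property of the time-homogeneous diffusion $Y$. Integrability will not be an issue: since $U^{\alpha^\xi}(y) \le v^\xi(u,y) \le U^{\mu_0}(y)$ and both potentials have linear growth, one has $|v^\xi(u,y)| \le c(1+|y|)$, and $Y$ has finite first moments under $\p^x$ by the linear growth hypothesis on $\eta$ used already in the proof of Lemma~\ref{lem:vxi-immediate}.

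For $V^t$ with $t<\infty$, the plan is to fix $0 \le r \le s \le t$ and use the Markov property of $Y$ to write
\[
\Eps{x}{V^t_s \mid \F_r} \;=\; \Eps{Y_r}{v^\xi\big((t-r)-(s-r),\, Y_{s-r}\big)}.
\]
Then I would apply \eqref{eq:vxiIdent} with starting point $Y_r$, time horizon $t-r$, and the deterministic stopping time $s-r \le t-r$; this bounds the right-hand side above by $v^\xi(t-r, Y_r) = V^t_r$, giving the supermartingale property. The analogous argument for $V^{t'}-V^t$ with $t\le t'<\infty$ is carried out by applying the same logic to the function $g(u,y) := v^\xi(u+(t'-t),y)-v^\xi(u,y)$. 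Subtracting the two instances of the distributional identity gives
\[
\Lc g(u,dy) \;=\; -\int_u^{u+(t'-t)} \eta(y)^2 \,\xi(dr,dy) \;\le\; 0,
\]
so by linearity of $\Lc$ and of the expectation in the derivation of Lemma~\ref{lem:vxi-immediate}, the same Dynkin-type inequality $\Eps{y}{g(u-\tau, Y_\tau)} \le g(u,y)$ holds for every bounded stopping time $\tau \le u$. Applying it with $u=t-r$, $\tau=s-r$, $y=Y_r$, and invoking Markov again, one obtains $\Eps{x}{V^{t'}_s-V^t_s \mid \F_r} \le V^{t'}_r-V^t_r$.

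Finally, the case $t'=\infty$ is handled by passage to the limit. Since $v^\xi(\cdot,y)$ is non-increasing, $V^{t'}_s = v^\xi(t'-s,Y_s) \searrow U^{\alpha^\xi}(Y_s) = V^\infty_s$ as $t'\to\infty$, so the non-positive process $V^{t'}-V^t$ decreases pointwise to $V^\infty-V^t$; the integrability bound above then allows dominated convergence inside the conditional expectation, and the supermartingale inequality passes to the limit. The process $V^\infty$ is directly a supermartingale since $U^{\alpha^\xi}$ is concave and $Y$ is a martingale diffusion (It\^o/Tanaka, with integrability to promote local to genuine supermartingale). The only substantive technical point is the subtraction step producing the Dynkin identity for $g$; this is routine once the distributional form of $\Lc v^\xi$ in Lemma~\ref{lem:vxi-immediate} is in hand, so I do not expect any serious obstacle.
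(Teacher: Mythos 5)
Your argument is correct, but it is genuinely different from the paper's. The paper proves both supermartingale statements by a purely probabilistic coupling: it applies Hunt's switching identity to exchange the roles of the forward process $X$ and backward process $Y$, and then uses Jensen's inequality together with the elementary observation that $(t-s)\wedge\sigma^\xi + (s-u) \ge (t-u)\wedge\sigma^\xi$ (respectively, that $|X_{r\wedge u}-y|-|X_{r'\wedge u}-y|$ is a supermartingale for $r<r'$). In contrast, you lean on the distributional identity $\Lc v^\xi(t,dx) = -\eta(x)^2\int_0^t \xi(ds,dx)\le 0$ from Lemma~\ref{lem:vxi-immediate} in its Dynkin form \eqref{eq:vxiIdent}, combine it with the Markov property, and obtain the difference statement by subtracting the identity at two horizons. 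Your route is cleaner once \eqref{eq:vxiIdent} is available and makes the mechanism transparent (the sign of $\Lc v^\xi$, and that $\Lc v^\xi(t,\cdot)-\Lc v^\xi(t',\cdot)\ge 0$ for $t\le t'$), but it depends on the semi-analytic machinery developed in the proof of Lemma~\ref{lem:vxi-immediate}. The paper's proof is self-contained in the probabilistic setting, at the cost of invoking the switching identity and a slightly more intricate coupling. Two small points worth making explicit in a write-up: when you subtract the two instances of \eqref{eq:vxiIdent} the argument of $\Lc v^\xi$ under the $\int_0^\tau$ is $u-r$ and $(u+t'-t)-r$, not $u$ and $u+(t'-t)$, so the pointwise identity you display for $\Lc g$ should be understood at the running backward time; the non-positivity survives unchanged. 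And for $t'=\infty$, since $V^{t'}-V^t$ is non-positive and decreasing in $t'$, monotone convergence (after negating) suffices; no dominating function is really needed, though your linear-growth bound does supply one.
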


\begin{proof}
  In this proof we will want to take expectations with respect to both the $X$ and $Y$ processes at the same time; we will assume that these are defined on a product space, where the processes are independent. Then we will denote expectation with respect to the $X$ process alone by $\Epsb{\mu}{X}{A}$, etc, and the filtrations generated by the respective processes by $\Fc_s^X$ and $\Fc_s^Y$.

We first prove the supermartingale property for the process $V^t$. The case $t=\infty$ is an immediate consequence of the Jensen inequality. Next, fix $t\in[0,\infty)$, and  recall that $v^{\xi}(t,x) = -\Epsba{\mu_0}{X}{X_{t \wedge \sigma^\xi}-x}$ for $t \ge 0, x \in \bI$. Then we need to show, for $0 \le u \le s$,
\begin{equation*}
  -\Epsb{\nu_0}{Y}{\Epsb{\mu_0}{X}{|X_{(t-s)\wedge \sigma^\xi}-Y_s|}\big| \Fc_u^Y} \le - \Epsba{\mu_0}{X}{X_{(t-u)\wedge\sigma^\xi}-Y_u}.
\end{equation*}
Using Hunt's switching identity % and Jensen's inequality
we have
\begin{equation*}
  \Epsba{y}{Y}{x-Y_{s-u}} = \Epsba{x}{X}{X_{s-u}-y}.% \ge \Epsba{x}{X}{X_{(s-u)\wedge \tilde{\sigma}}-y}
\end{equation*}
% for an arbitrary stopping time $\tilde{\sigma}$. 
Using the Strong Markov property, and $\tilde{Y}, \tilde{\e}$ to denote independent copies of $Y$ etc., we deduce
\begin{align*}
  -\Epsb{\nu_0}{Y}{\Epsb{\mu_0}{X}{|X_{(t-s)\wedge \sigma^\xi}-Y_s|}\big| \Fc_u^Y}
  & = -\Epsb{\mu_0}{X}{\tilde{\e}^{Y_u}_{Y}\left|X_{(t-s)\wedge \sigma^\xi}-\tilde{Y}_{s-u}\right|}\\
  & = -\Epsb{\mu_0}{X}{\tilde{\e}^{X_{(t-s)\wedge \sigma^\xi}}_{X}\left|\tilde{X}_{(s-u)}-Y_u\right|}\\
  & = -\Epsb{\mu_0}{X}{\left|X_{(t-s)\wedge \sigma^\xi +(s-u)}-Y_u\right|}\\
  & \le - \Epsb{\mu_0}{X}{\left|X_{(t-u)\wedge \sigma^\xi}-Y_u\right|}
\end{align*}
where, in the final line, we used Jensen's inequality and the fact that $(t-s)\wedge \sigma^\xi +(s-u) \ge (t-u)\wedge \sigma^\xi$.

Now suppose $t' \ge t$, and consider $V^{t'}_s-V^t_s$ for $0 \le s \le t \le t'$. A similar calculation to that above shows that for $u \le s$,
\begin{align*}
  \Epsb{\nu_0}{Y}{V_s^{t'}-V_s^{t}|\Fc_u^Y}
  & = \Epsb{\mu_0}{X}{\left|X_{(t-s)\wedge \sigma^\xi +(s-u)}-Y_u\right|-\left|X_{(t'-s)\wedge \sigma^\xi +(s-u)}-Y_u\right|}
  \\
& = \Epsb{\mu_0}{X}{\left|X_{(t-u)\wedge \tilde\sigma}-Y_u\right|-\left|X_{(t'-u)\wedge \tilde\sigma}-Y_u\right|},
%  & \le \Epsb{\mu_0}{X}{\left|X_{(t-u)\wedge \sigma^\xi}-Y_u\right|-\left|X_{(t'-u)\wedge \sigma^\xi}-Y_u\right|}\\
%  & = V_u^{t'}-V_u^t.
\end{align*}
where $\tilde\sigma:=\sigma^\xi+(s-u)$. Note that for any $r'>r$, the process $|X_{r\land u}-y|-|X_{r'\land u}-y|$ is a supermartingale for $u\geq 0$. It follows that, since $\sigma^\xi\leq \tilde \sigma$,
\begin{align*}
  \Epsb{\nu_0}{Y}{V_s^{t'}-V_s^{t}|\Fc_u^Y}
 & = \Epsb{\mu_0}{X}{\left|X_{(t-u)\wedge \tilde\sigma}-Y_u\right|-\left|X_{(t'-u)\wedge \tilde\sigma}-Y_u\right|},\\
  & \le \Epsb{\mu_0}{X}{\left|X_{(t-u)\wedge \sigma^\xi}-Y_u\right|-\left|X_{(t'-u)\wedge \sigma^\xi}-Y_u\right|}\\
  & = V_u^{t'}-V_u^t.
\end{align*}

% let $\rho\in C^{1,2}$, $\rho\ge 0,$ with $\int\rho=1$, be a non-negative mollifier. Set $\rho_n(t,x):=n^2\rho(nt,nx)$, and $v_n(t,x):=(v^\xi*\rho_n)(t,x)$, $t\ge 0, x\ge 0$. Then, $v_n\in C^{1,2}$, inherits the Lipschitz property of $v^\xi$ in $x$, and 
%  $$
%  \Lc v_n(t,x)=\int \Lc v^\xi(t-s,dy)\rho_n(s,x-y)ds=-\int \int_0^{t-s}\eta(x)^2\xi(dr,dy)\rho_n(s,x-y)ds\le 0,
%  $$\comment{AC: I think this is broken, and I'm not sure how to fix it.\\ AC: Perhaps we need to emulate the argument in the appendix?}
% by Lemma \ref{lem:vxi-immediate}. Direct application of It\^o's formula then implies that the process $\{v_n(t-s,Y_s), s\in[0,t]\}$ is a $\p^x$-local supermartingale for all $x\in \bI$. Sending $n\to\infty$, and using the bounds on $v^\xi$ established in Lemma \ref{lem:vxi-immediate}, it follows from the dominated convergence theorem that $V^t$ is a $\p^x$-supermartingale for all $x\in \bI$. 

% Similarly, for fixed $t\le t'\le\infty$, we have
%  $$
%  \Lc v_n(t',x)
%  -\Lc v_n(t,x)
%  =-\eta(x)^2\int \int_{t-s}^{t'-s}\xi(dr,dy)\rho_n(s,x-y)ds\le 0.
%  $$\comment{AC: As above...}
% Then it follows from It\^o's formula that the process $\{v_n(t'-s,Y_s)-v_n(t-s,Y_s), s\in[0,t]\}$ is a $\p^x$-local supermartingale for all $x\in \bI$, and we deduce that $V^{t'}-V^t$ is a $\p^x$-supermartingale for all $x\in \bI$, by dominated convergence.
\end{proof}

\subsection{The optimal stopping problem}

In this section we derive some useful properties of the function $u^{\beta}(t,x)$.
We first state some standard facts from the theory of optimal stopping. Introduce
 \begin{eqnarray}\label{tauxt}
   \tau^t
   :=
   \inf\{s\geq 0: (t-s,Y_s)\in{\cal R}^\beta\}\wedge t,
   &\mbox{for all}&
   t\ge 0.
 \end{eqnarray}
%where we emphasise the starting point, $x$ of the Brownian motion $Y_t = Y_t^x$.

\begin{prop}\label{prop:existence} %\label{rk:opt_stop_rep}
Let $\sx\in\T$ with corresponding time-space distribution $\xi$, and $\ax \preceq_{\text{\rm cx}} \beta$. Then, for all $(t,x)\in\Ss$, $\tau^t\in\T^t$ is an optimal stopping rule for the problem $u^\beta$ in \eqref{eq:opt_stop_new}:
 \begin{eqnarray}\label{eq:backwards_rep}
 u^\beta(t,x)
 = \Eps{x}{v^\xi(t-\tau^t,Y_{\tau^t}) + w^{\beta}( Y_{\tau^t})\1_{\{\tau^t<t\}}},
\end{eqnarray}
and the process $\left(u^\beta(t-s,Y_s)\right)$ is a $\p^x$-martingale for $s\in [0,\tau^t]$ and 
a $\p^x$-supermartingale for $s\in [0,t]$.
\end{prop}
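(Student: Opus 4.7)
The proposition is the standard Snell-envelope characterisation of a finite-horizon Markovian optimal stopping problem, so the task is to check the hypotheses in our setting. First, I would verify that the reward
\[
g(s,y):=v^\xi(t-s,y)+w^\beta(y)\1_{\{s<t\}}
\]
is bounded on $[0,t]\times\bI$ and continuous on $[0,t)\times\bI$, with only an upward jump possible at $s=t$. Continuity of $v^\xi$ (Lipschitz in $x$, $\tfrac{1}{2}$-H\"older in $t$) is already provided by Lemma \ref{lem:vxi-immediate}, while continuity of $w^\beta=U^\beta-U^{\alpha^\xi}$ on $\iI$ is classical. For boundedness, the convex-ordering hypothesis $\alpha^\xi\preceq_{\text{cx}}\beta$ with both measures centred implies that $w^\beta$ is non-positive, bounded, and vanishes at $\partial\bI$, while $v^\xi$ is trapped between $U^{\mu_0}$ and $U^{\alpha^\xi}$.

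Next, I would apply standard finite-horizon Markovian optimal stopping theory (cf.\ \cite{BensoussanLions:82}) to identify the Snell envelope
\[
U_s:=\operatorname*{ess\,sup}_{\tau\in\T^t,\,\tau\ge s}\e^x\!\left[g(\tau,Y_\tau)\,\big|\,\F_s\right],\qquad s\in[0,t],
\]
as the smallest c\`adl\`ag $\p^x$-supermartingale dominating $g(\cdot,Y_\cdot)$. The Markov property and time-homogeneity of $Y$ yield the Markovian representation $U_s=u^\beta(t-s,Y_s)$, which immediately gives the $\p^x$-supermartingale property on $[0,t]$. Continuity and boundedness of the reward on $[0,t)\times\bI$ moreover furnish (joint) continuity of $u^\beta$ on $(0,\infty)\times\bI$ by a standard Feller-type argument, and hence closedness of $\R^\beta$, so that $\tau^t$ is a genuine stopping time.

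Finally, for the optimality of $\tau^t$ and the martingale property on $[0,\tau^t]$: on $\{s<\tau^t\}$ the value $u^\beta(t-s,Y_s)$ strictly dominates $g(s,Y_s)$, so the dynamic programming principle forces the stopped process $(u^\beta(t-s\wedge\tau^t,Y_{s\wedge\tau^t}))$ to be a martingale; evaluating at $s=\tau^t$ then yields the representation \eqref{eq:backwards_rep}. The main technical obstacle I anticipate is the discontinuity of $g$ at $s=t$ introduced by $\1_{\{\tau<t\}}$, which in principle could prevent a first-entry rule from being optimal when the reward is only upper semicontinuous. This is resolved by the observation that $w^\beta\le 0$ under $\alpha^\xi\preceq_{\text{cx}}\beta$, so the discontinuity is an upward jump, equivalently the upper semicontinuous envelope of $g$ coincides with $g$ at the terminal time, and the classical first-entry optimality argument applies unchanged.
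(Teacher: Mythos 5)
Your plan takes essentially the same route as the paper: formulate the problem as a classical finite-maturity Snell-envelope optimal stopping problem for the obstacle $Z_s=v^\xi(t-s,Y_s)+w^\beta(Y_s)\1_{\{s<t\}}$, invoke the general theory (the paper cites El Karoui, where the sufficient hypothesis is upper semicontinuity in expectation along monotone sequences of stopping times), and observe that the terminal discontinuity caused by $\1_{\{\tau<t\}}$ is benign because $w^\beta\le 0$ makes it an upward jump at $s=t$. One small imprecision in your plan: the reward is not bounded on $[0,t]\times\bI$ when $\bI$ is unbounded (indeed $v^\xi(t,x)\to-\infty$ as $|x|\to\infty$); what is actually needed, and follows from uniform integrability of $X_{\cdot\wedge\sigma^\xi}$, is that the reward process be of class (D).
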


\begin{proof}
Recall that under $\p^{t,x}$ the diffusion $Y_r$, $r\geq t$ departs from $x$ at time $t$, and when $t=0$, we write $\p^{0,x} = \p^x$. Then we have for $0\leq s\leq t$:
\begin{equation}
 u^\beta(t-s,x)
 =
 u^t(s,x)
 :=
 \sup_{s\leq \tau\leq t}
  \e^{s,x}\big[v^\xi(t-\tau,Y_{\tau})+w^\beta(Y_\tau)\1_{\{\tau<t\}}\big].
\end{equation}
Notice that $u^t(s,x)$ is a classical optimal stopping problem with maturity $t$, and obstacle $Z_s:=v^\xi(t-s,Y_s)+w^\beta(Y_s)\1_{\{s<t\}}$, $s\in[0,t]$, satisfying the condition of upper semicontinuity under expectation, i.e. $\limsup_{n\to\infty}\e^x[Z_{\theta_n}]\le\e^x[Z_\theta]$ for any monotone sequence of stopping times $\theta_n$ converging to $\theta$. Under this condition, it is proved in \cite{ElKaroui} that the standard theory of optimal stopping holds true. In particular, the process $\left(u^\beta(t-s,Y_s)\right)_{s \le t}$ satisfies the announced martingale and supermartingale properties, and an optimal stopping time for the problem $u^t(0,x)=u^\beta(t,x)$ is
 $$
  t\wedge\inf\big\{s\geq 0: u^t(s,Y_s)=v^\xi(t-s,Y_s) + w^\beta (Y_s)\big\},
 $$
which is exactly $\tau^t$.
\end{proof}

\begin{remark}\label{rk:barriernonempty}
Note that, taking $\tau=t$ in \eqref{eq:opt_stop_new}, $u^\beta(t,x)\geq \e^x [\pmu{0}(Y_t)]=\pmu{0}(x)+\Eps{x}{\int {\pmu{0}}''(dy) L_t^y}:= U^t(x)$. Suppose $([0,t]\times \iI)\cap\R^\beta=\emptyset$ then, from \eqref{eq:backwards_rep}, $u^\beta(t,x)=U^t(x)>U^\beta(x)$ for all $x\in \re$. We now consider the cases where $\I = \re$, $\bI = (-\infty, b_{\I}]$, $b_{\I}<\infty$ and $\bI = [a_\I, b_{\I}]$ a finite interval separately. 

In the case where $\I = \re$, we have $\Eps{x}{L_t^y} \to \infty$ as $t \to \infty$, for any $x,y \in \I$. % It follows that if for some $t>0$ and $x\in \I$ we have $U^t(x)>U^\beta(x)$ then $\ovl t^\beta(x)>t$. In particular, if $U^\beta(x)<U^t(x)$ for all $x\in \re$ then $([0,t]\times \re)\cap \R^\beta=\emptyset$ or equivalently $\ovl t^\beta(x)>t$ for all $x\in \re$.
% Conversely, if
As $U^t(x) \to -\infty$ for all $x$ as $t \to \infty$, it is impossible that $U^t(x)>U^\beta(x)$ for all $x \in \re$ and all $t \ge 0$. So there always exists $x\in \I$ with $\ovl t^\beta(x)<\infty$ and hence $\R^\beta\neq \emptyset$.

Similarly consider the case where $\bI = (-\infty, b_{\I}]$. From the properties of the diffusion, we know that $X_t \to b_\I$ almost surely as $t \to \infty$. Moreover, since $\Eps{x}{X_t} = x$ and $-|x| \ge \pmu{0}(x) \ge -|x| -c$, for some $c \in [0,\infty)$, we must have $U^t(x) \to -b_{\I}+(x-b_\I)$ as $t \to \infty$ for $x \in \bI$. Since $\beta$ is centred, $|U^\beta(x) + |x|| \to 0$ as $x \to -\infty$, and hence we cannot have $U^t(x) > U^\beta(x)$ for all $x \in \iI$ and all $t\geq 0$. Hence there always exists $x\in \iI$ with $\ovl t^\beta(x)<\infty$ and hence $\R^\beta\neq \emptyset$.

Finally consider the case where $\bI = [a_\I, b_{\I}]$. Hence $\lim_{t \to \infty} X_t \in \{a_\I, b_\I\}$, and a similar argument to above gives $U^t(x) \to a_{\I}-\frac{x-a_\I}{b_\I-a_\I}(b_\I+a_\I)$ as $t \to \infty$, for $x \in \bI$. This limit corresponds to  $U^{\tilde \nu}(x)$, where $\tilde \nu$ is the centred measure supported on $\{a_\I, b_\I\}$, and it is easy to check that this potential is strictly smaller than the potential of any other centred measure supported on $\bI$, and so for any other measure, there always exists $x\in \iI$ with $\ovl t^\beta(x)<\infty$ and hence $\R^\beta\neq \emptyset$. The case of the measure $\tilde \nu$ is trivial, and we exclude this from subsequent arguments.
% We also make the following observation: once we know that $\R^\beta$ is non-empty, we can also deduce that the stopped (space)-distribution is integrable: suppose that $(t,x) \in \R^\beta$. Since $\mu_0$ is integrable, so too is the law of $X_t$ under $\p^{\mu_0}$. In addition, after time $t$, $(X_{t\wedge\srb}-x)_+$ is a non-negative supermartingale since $\srb$ happens before we hit $x$. Similarly for $(x-X_{t \wedge \srb})_+$, and it follows that $\Epsa{\mu_0}{X_{\srb}-x} < \infty$ for any $x \in \R$.
\end{remark}

\begin{lemma}
  \label{lem:opt-stop-prop}
Let $\sx\in\T$ with corresponding time-space distribution $\xi$, and $\ax \preceq_{\text{\rm cx}} \beta$. Then:
  \begin{enumerate}
  \item the function $u^\beta$ is $1$-Lipschitz-continuous in $x$, non-increasing and $u^{\beta}$ is $\frac{1}{2}$-H\"older-continuous in $t$, and there is a constant $C$ which is independent of $\beta$ such that $|u^\beta(t,x) - u^\beta(t',x)| \le C(1+|x|)\sqrt{|t-t'|}$;
    \item \label{item:u-less-b-decreasing} $u^\beta-v^\xi$ is non-increasing in $t$; in particular, $u^\beta$ is non-increasing in $t$ and concave in $x$;
  \item \label{item:v-xi-u-beta-ineq} 
  $u^\beta(0,.) = \pmu{0}$, $U^\beta \le v^\xi + w^\beta \le  u^\beta \le v^\xi$, and $u^\beta(t,.) \searrow U^\beta$ pointwise as $t \nearrow\infty$.
  \end{enumerate}
\end{lemma}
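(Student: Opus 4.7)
The strategy is to combine three earlier tools: the supermartingale properties of $V^t$ from Lemma \ref{lem:vxi supermart}, the identity \eqref{eq:vxiIdent} from Lemma \ref{lem:vxi-immediate}, and the Snell-envelope structure from Proposition \ref{prop:existence}. My plan is to dispatch the algebraic bounds in (iii) (all of it except the convergence) first, then use them together with \eqref{eq:vxiIdent} to derive the monotonicity in (ii), transfer regularity from Lemma \ref{lem:vxi-immediate} to obtain (i), and finally establish the pointwise convergence $u^\beta(t,\cdot)\searrow U^\beta$, which is the most delicate point.

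The algebraic bounds in (iii) are mostly direct. Since $\T^0=\{0\}$, one has $u^\beta(0,x)=v^\xi(0,x)=U^{\mu_0}(x)$. The inequality $v^\xi\ge U^{\ax}$ from \eqref{vxi0} rewrites $v^\xi+w^\beta=v^\xi-U^{\ax}+U^\beta\ge U^\beta$. The deterministic rule $\tau\equiv 0$ is suboptimal in \eqref{eq:opt_stop_new}, giving $u^\beta\ge v^\xi+w^\beta$. Finally, $u^\beta\le v^\xi$: for each $\tau\in\T^t$, $w^\beta\le 0$ makes the $w^\beta$-contribution non-positive, while Lemma \ref{lem:vxi supermart} yields $\e^x[v^\xi(t-\tau,Y_\tau)]\le v^\xi(t,x)$; taking the supremum concludes.

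For (ii), identity \eqref{eq:vxiIdent} shows that
\[A(\tau,x):=\e^x[v^\xi(t-\tau,Y_\tau)]-v^\xi(t,x)=-\e^x\Big[\!\int_0^\tau\! \eta(Y_s)^2\, \xi([0,s]\times\{Y_s\})\,ds\Big]\]
depends only on $\tau$, $x$ and $\xi$ (not on the horizon $t\ge\tau$), and is non-increasing in $\tau$ since the integrand is non-negative. Hence $u^\beta(t,x)-v^\xi(t,x)=\sup_{\tau\in\T^t}\{A(\tau,x)+\e^x[w^\beta(Y_\tau)\1_{\tau<t}]\}$. For $t<t'$ and an arbitrary $\tau^*\in\T^{t'}$, set $\tau':=\tau^*\wedge t\in\T^t$; a brief case split on $\{\tau^*\le t\}$ and $\{\tau^*>t\}$, using $w^\beta\le 0$ and the monotonicity of $A$, yields pointwise $A(\tau^*,x)+w^\beta(Y_{\tau^*})\1_{\tau^*<t'}\le A(\tau',x)+w^\beta(Y_{\tau'})\1_{\tau'<t}$. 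Taking expectations and then the supremum over $\tau^*$ gives $u^\beta(t',\cdot)-v^\xi(t',\cdot)\le u^\beta(t,\cdot)-v^\xi(t,\cdot)$. The ``in particular'' conclusions follow: $u^\beta$ is non-increasing in $t$ as the sum with the non-increasing $v^\xi$ (Lemma \ref{lem:vxi-immediate}); concavity in $x$ is inherited from concavity of $v^\xi(s,\cdot)$, $U^{\mu_0}$, $U^\beta$ together with the martingale (identity scale) property of $Y$ via a standard excessive-function argument for 1D diffusions.

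The regularity in (i) transfers from the corresponding bounds on $v^\xi$ (Lemma \ref{lem:vxi-immediate}) and on $U^\beta$, $U^{\ax}$ by comparing $u^\beta$ at neighbouring points $(t,x)$ and $(t',x')$ via the optimal rules $\tau^{t,x}$, $\tau^{t',x'}$ of Proposition \ref{prop:existence} used as suboptimal rules at the other point. For the convergence $u^\beta(t,\cdot)\searrow U^\beta$, Tanaka's formula yields the decomposition
\[v^\xi(s,y)+w^\beta(y)=U^\beta(y)+\e^{\mu_0}\big[(L^y_{\sx}-L^y_s)\1_{s<\sx}\big],\]
whose local time remainder decreases to $0$ as $s\to\infty$ by monotone convergence. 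So for each $\tau$ the obstacle $v^\xi(t-\tau,Y_\tau)+w^\beta(Y_\tau)\1_{\tau<t}$ decreases to $U^\beta(Y_\tau)\1_{\tau<\infty}$, and concavity of $U^\beta$ makes $U^\beta(Y_\cdot)$ a $\p^x$-supermartingale so $\e^x[U^\beta(Y_\tau)]\le U^\beta(x)$. Combined with the lower bound $u^\beta\ge U^\beta$ from (iii), \textbf{the main obstacle} is exchanging supremum and limit; I would handle this using the monotonicity of $u^\beta$ in $t$ from (ii) together with uniform control on the local time remainder along optimising sequences of rules.
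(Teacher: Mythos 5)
Your treatment of the algebraic bounds in (iii) is correct and matches the paper's. For (ii) you take a different route from the paper (which uses the supermartingale properties of $V^t$ and $V^{t'}-V^t$ from Lemma~\ref{lem:vxi supermart}), invoking instead the It\^o identity~\eqref{eq:vxiIdent}. That route is viable, but your central claim, that $A(\tau,x)=\e^x[v^\xi(t-\tau,Y_\tau)]-v^\xi(t,x)$ is independent of the horizon $t$, is not correct. An It\^o computation along the space-time path $s\mapsto(t-s,Y_s)$ evaluates $\Lc v^\xi$ at the \emph{real} time $t-s$, so the inner integral over the time coordinate of $\xi$ should run over $[0,t-s]$, not $[0,s]$; the first display of~\eqref{eq:vxiIdent} appears to contain a typo here, as one can check against its second line and against the computation in the proof of Lemma~\ref{lem:new_box}. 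With the corrected formula $A$ does depend on $t$. Fortunately this dependence is monotone in the favourable direction (the integrand only grows as $t$ increases), so your case split on $\{\tau^*\le t\}$ and $\{\tau^*>t\}$ can be repaired, but the $t$-dependence must be verified rather than dismissed.

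The genuine gap is the pointwise convergence $u^\beta(t,\cdot)\searrow U^\beta$. You correctly identify that exchanging limit and supremum is the crux, but your proposed remedy (``uniform control on the local time remainder along optimising sequences of rules'') is not substantiated and is not obviously available, since the optimal rule $\tau^t$ changes with $t$ and no uniform control follows without an argument on the geometry of the barrier. The paper's proof distinguishes three cases for the point $x$: (a) $(t_0,x)\in\R^\beta$ for some finite $t_0$, so eventually $\tau^t=0$ and the limit is immediate; (b) a sequence $(t_n,x_n)\in\R^\beta$ with $x_n\to x$, reducing to (a) by Lipschitz continuity; (c) a vertical strip around $x$ misses $\R^\beta$ entirely. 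Case (c) is the delicate one: it requires knowing $\R^\beta\neq\emptyset$ (Remark~\ref{rk:barriernonempty}), identifying the maximal open interval $(a_x,b_x)$ over which $\R^\beta$ is avoided, applying Fatou together with Lemmas~\ref{lem:vxi-immediate} and~\ref{lem:vxi supermart}, concluding $U^\beta$ is linear on $(a_x,b_x)$, and a separate truncation argument when $b_x$ or $a_x$ is infinite. Your sketch does not supply any of this case analysis, and I do not see how to avoid it.
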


\begin{proof}
\noindent (i) The $1$-Lipschitz-continuity of $u^\beta(t,x)$ in $x$ follows directly from the Lipschitz continuity of $v^\xi$ and $w^\beta$ in $x$. Then, the $\half-$H\"older continuity in $t$ follows by standard arguments using the dynamic programming principle (for example, as a simple modification of the proof of Proposition~2.7  in \cite{Touzi:2012aa}). 
\\ 
(ii)  Let $t'>t$, fix $\eps>0$, and let $\tau'\in\T^{t'}$ be such that
  \begin{eqnarray*}
    u^\beta(t',x) - \eps 
    \le&
    \Eps{x}{v^\xi(t'-\tau',Y_{\tau'}) + w^\beta(Y_{\tau'}) \1_{\{\tau'<t'\}}}=    \Eps{x}{V^{t'}_{\tau'} + w^\beta(Y_{\tau'}) \1_{\{\tau'<t'\}}}.
  \end{eqnarray*}
 Recall from Lemma \ref{lem:vxi supermart} the supermartingale properties of the process $V^t$ introduced in \eqref{Vt}. Then
 $$
    \Eps{x}{V^{t'}_{\tau'}}
    \;\le\;
    \Eps{x}{V^{t'}_{t\wedge\tau'}}
    \;=\;
    \Eps{x}{V^{t'}_{t\wedge\tau'}-V^t_{t\wedge\tau'}}
     + \Eps{x}{V^t_{t\wedge\tau'}}
    \;\le\;
    V^{t'}_0-V^t_0 + \Eps{x}{V^t_{t\wedge\tau'}}.
  $$
 In addition, since $w^\beta\le 0$, we have:
  $$
    \Eps{x}{w^\beta(Y_{\tau'}) \1_{\{\tau' < t'\}} }
    \;\le\; 
    \Eps{x}{w^\beta(Y_{\tau'}) \1_{\{\tau' < t\}}}
    \;=\; 
    \Eps{x}{w^\beta(Y_{\tau' \wedge t}) \1_{\{\tau'<t\}}}.
  $$
Putting these together, we conclude that
  \begin{eqnarray*}
    u^\beta(t',x) - v^\xi(t',x) - \eps 
    &\le& 
    \Eps{x}{V^t_{t\wedge\tau'}
                 +w^\beta(Y_{\tau' \wedge t}) \1_{\{\tau'<t\}}} 
    - v^\xi(t,x)
    \;\le\;
    u^\beta(t,x) - v^\xi(t,x).
  \end{eqnarray*}
By the arbitrariness of $\eps>0$, this shows $u^\beta-v^\xi$ is non-increasing in $t$, and implies that $u^\beta$ inherits from $v^\xi$ the non-increase in $t$. By the supermartingale property  of the process $\left(u^\beta(t-s,Y_s)\right)_{s \in [0,t]}$ in Proposition \ref{prop:existence}, this in turns implies that $u^\beta$ is concave in $x$.
\\
(iii) By definition, $u^\beta(0,x) = v^\xi(0,x) = \pmu{0}(x)$. Since $v^\xi(t,x) \ge U^{\ax}(x)$,  we have $u^\beta(t,x) \ge v^\xi(t,x) + w^\beta(x) \ge U^\beta(x)$. On the other hand, since $w^\beta(x) \le 0$, we have $u^\beta(t,x) \le \sup_{\tau \le t} \Eps{x}{v^\xi(t-\tau,Y_\tau)}\le v^\xi(t,x)$ by the supermartingale property of $V^t$ established in the previous Lemma \ref{lem:vxi supermart}. 

In the rest of this proof, we show that $u^\beta(t,x) \to U^\beta(x)$ as $t \to \infty$ for all $x\in\bI$. We consider three cases:

- Suppose $(t_0,x)\in\R^\beta$ for some $t_0\ge 0$. Then, for any $t\geq t_0$, $\tau^t=0$ and $u^\beta(t,x)=v^\xi(t,x)+w^\beta(x)$ which converges to $U^{\ax}(x)+w^\beta(x)=U^\beta(x)$, as $t \to \infty$. 
%it follows from the decrease of $u^\beta-v^\xi$ that $u^\beta(.,x)=v^\xi(.,x)+w^\beta(x)$ on $[t_0,\infty$, and in particular $u^\beta(t,x) \to U^{\ax}(x)+w^\beta(x)=U^\beta(x)$.

- Suppose that $(t_n,x_n)\in\R^\beta$ for some sequence $(t_n,x_n)_{n\ge 1}$ with $x_n\to x$. Then it follows from the previous case that $u^\beta(t,x_n)\to U^\beta(x_n)$, and therefore $u^\beta(t,x)\to U^\beta(x)$ by the Lipschitz-continuity of $u^\beta$. 

- Otherwise, suppose that $[0,\infty]\times (x-\eps,x+\eps)$ does not intersect $\R^\beta$ for some $\eps>0$. Let $(a_x,b_x):=\cup(a,b)$ over all $a\le x-\eps<x+\eps\le b$ such that $[0,\infty]\times (a,b)$ does not intersect $\R^\beta$. By Remark \ref{rk:barriernonempty}, we may assume $\R^\beta$ is not empty and hence $(a_x,b_x)\neq\iI$. In the subsequent argument, we assume that $a_x$ is finite, the case where $b_x$ is finite follows by the same line of argument. The  optimal stopping time $\tau^t$ in \eqref{tauxt} satisfies  $\tau^t \geq H_{a_x, b_x} := \inf\{r \ge 0: Y_t \not \in ( a_x, b_x)\}$ and $\tau^t \to H_{a_x, b_x}$, $\p^x$-almost surely. If both $a_x$ and $b_x$ are finite, we use the inequality $u^\beta(t,x)  \ge U^\beta(x)$, together with Fatou's Lemma, Lemmas \ref{lem:vxi-immediate} and \ref{lem:vxi supermart}, and bounded convergence, to see that
  \begin{align}
  \label{Ubeta linear}
    U^\beta(x) \le \lim_{t \to \infty}u^\beta(t,x) & = \lim_{t \to \infty}\Eps{x}{v^\xi(t-\tau^t,Y_{\tau^t}) + w^\beta(Y_{\tau^t})} \nonumber\\ 
    & \leq  \lim_{t \to \infty} \Eps{x}{v^\xi(t-H_{a_x, b_x},Y_{H_{a_x, b_x}})} + \Eps{x}{w^\beta(Y_{H_{a_x, b_x}})}\\
    & \le \Eps{x}{\lim_{t \to \infty} v^\xi(t-H_{a_x, b_x},Y_{H_{a_x, b_x}}) + w^\beta(Y_{H_{a_x, b_x}})}
    = \Eps{x}{U^\beta(Y_{H_{a_x, b_x}})} \le U^\beta(x).
  \nonumber
  \end{align}
Hence  $\lim_{t \to \infty}u^\beta(t,x)=U^\beta(x)$, and $U^\beta$ is linear on $(a_x,b_x)$. 

For the general case where $b_x$ may be infinite, a more careful argument is needed. Since $w^\beta:=(U^\beta-U^{\ax})(x) \to 0$ as $|x| \to 0$, it follows that $\delta:= \max (-w^\beta)<\infty$. Fix $\eps>0$ and choose $c$ sufficiently large that $\delta/(c-a_x) <\eps$.  Let $H_c:= \inf\{s \ge 0:  Y_s\ge c\}$ and note that $\tau^t\wedge H_c \to H_{a_x, c} = \inf \{ t \ge 0: Y_t \not\in(a_x, c)\}$ as $t \to \infty$.
Then by the martingale property of $u^\beta$ on $t \le \tau^t$, and the fact that $ u^\beta\le v^\xi$, we have
\begin{align*}
  u^\beta(t,x) 
  & = \Eps{x}{u^\beta(t-\tau^t\wedge H_c,Y_{\tau^t\wedge H_c})}
  \\
  & \le \Eps{x}{\1_{\{\tau^t\le H_c\}}(v^\xi+w^\beta)(t-\tau^t\wedge H_c,Y_{\tau^t\wedge H_c})
                        +\1_{\{\tau^t> H_c\}} v^\xi(t-\tau^t\wedge H_c,Y_{\tau^t\wedge H_c})
                          }
  \\
  & \le \Eps{x}{v^\xi(t-\tau^t\wedge H_c,Y_{\tau^t \wedge H_c}) + w^\beta(Y_{\tau^t \wedge H_c})\1_{\{\tau^t\wedge H_c < t\}}} + \delta\p^x[\tau^t>H_c],
\end{align*}
where we wrote $w^\beta(t,x)=w^\beta(x)$. 
Taking limits as $t \to \infty$, and using Fatou as above, it follows from the definition of $c$ that:
 \begin{equation}\label{Ubetalinear}
  U^\beta(x) 
  \;\le\; 
  \lim_{t \to \infty}u^\beta(t,x) 
  \;\le\; \Eps{x}{U^\beta(Y_{H_{a_x, c}})} +\eps
  \;=\; \frac{x-a_x}{c-a_x}U^\beta(c)+\frac{c-x}{c-a_x}U^\beta(a_x) + \eps.
 \end{equation}
Taking $\eps\searrow 0$ and using concavity of $U^\beta$ we get that $\lim_{t \to \infty}u^\beta(t,x)=U^\beta(x)$, and $U^\beta$ is linear on $(a_x, c)$. Letting $c\to \infty$ we conclude that $U^\beta$ is linear on $(a_x,\infty)$.
\end{proof}

\subsection{Existence and basic properties of the barrier}

We denote the barrier function corresponding to the regular barrier
$\R^\beta$ defined in \eqref{eq:Rmu_definition_new} with $\ovl
t^\beta:=\ovl t_{\R^\beta}$. It will be used on many occasions in our
proofs. Recall from \eqref{eq:ellDefn} the definition of the support of a measure $\mu_k$ in terms of the measure $\mu_{k-1}$. In what follows, we write $\ell^\beta, r^\beta$ for the bounds of the support of $\beta$ in terms of the measure $\ax$.

\begin{cor}\label{cor:R-is-barrier}
Let $\sx\in\T$ with corresponding time-space distribution $\xi$, and $\ax \preceq_{\text{\rm cx}} \beta$. Then, the set $\R^\beta$ is a (closed) barrier, and moreover
\begin{enumerate}
  \item $\big([0,\infty]\times(\ell^\beta,r^\beta)^c\big)$ $\subset$ ${\cal R}^\beta$;
  \item\label{item:R-gap-implies-beta-null} ${\cal R}^\beta\cap ([0,\infty]\times(a,b))=\emptyset$ if and
only if $\beta[(a,b)]=0$ and $w^{\beta}< 0$ on $ (a,b)$;
\item \label{item:t-bar-is-zero} $\ovl t^\beta(x) = 0$ if and only if $w^\beta(x) = 0$.
    \end{enumerate}
\end{cor}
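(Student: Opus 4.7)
The plan is to verify the three defining axioms of a barrier for $\R^\beta$ first, and then to derive items (i)--(iii) from the sandwich $U^\beta\le v^\xi+w^\beta\le u^\beta\le v^\xi$ and the monotonicity of $u^\beta-v^\xi$ in $t$, both established in Lemma~\ref{lem:opt-stop-prop}, together with the identities of Lemma~\ref{lem:vxi-immediate}.

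Closedness of $\R^\beta$ is immediate from the continuity of $u^\beta$, $v^\xi$ and of $w^\beta$ (a $1$-Lipschitz difference of potentials of integrable measures). The time-extension property holds because if $(u^\beta-v^\xi)(t,x)=w^\beta(x)$ at some $(t,x)$, then the non-increase in $t$ of $u^\beta-v^\xi$ gives $(u^\beta-v^\xi)(s,x)\le w^\beta(x)$ for $s\ge t$, while the lower bound $u^\beta\ge v^\xi+w^\beta$ gives the reverse, hence equality. Boundary absorption at finite $x\in\partial\bI$ holds since centredness together with the support condition forces $U^{\ax}(x)=U^\beta(x)=-|x|$ there, so $w^\beta(x)=0$, and $u^\beta(0,x)=v^\xi(0,x)=\pmu{0}(x)$ then places $(0,x)\in\R^\beta$. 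Item (i) is now direct: by \eqref{UmuleU0} applied to $\ax\preceq_{\mathrm{cx}}\beta$, $w^\beta\equiv 0$ on $(\ell^\beta,r^\beta)^c$, and the sandwich collapses to $u^\beta=v^\xi=v^\xi+w^\beta$ there. Item (iii) is likewise immediate: since $u^\beta(0,x)=v^\xi(0,x)$, membership $(0,x)\in\R^\beta$ is equivalent to $w^\beta(x)=0$.

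For item (ii), the ``$\Rightarrow$'' direction is essentially already inside the last case of the proof of Lemma~\ref{lem:opt-stop-prop}: emptiness of $\R^\beta\cap([0,\infty]\times(a,b))$ yields, after letting the truncation go to infinity, that $U^\beta$ is affine on $(a,b)$, i.e. $\beta[(a,b)]=0$; and if $w^\beta(x_0)=0$ for some $x_0\in(a,b)$, the sandwich argument of (i) would place $(t,x_0)\in\R^\beta$ for every $t$, contradicting the assumed emptiness.

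For the ``$\Leftarrow$'' direction, assume $\beta[(a,b)]=0$ and $w^\beta<0$ on $(a,b)$, and fix $(t,x)\in[0,\infty)\times(a,b)$. Test the sub-optimal rule $\tau:=H_{a,b}\wedge t$ with $H_{a,b}:=\inf\{s\ge 0:Y_s\notin(a,b)\}$ in \eqref{eq:opt_stop_new}. Applying It\^o--Tanaka to $w^\beta$ (using $(w^\beta)''=2(\ax-\beta)=2\ax$ on $(a,b)$ and the vanishing of the boundary local times $L^a_\tau,L^b_\tau$ for the diffusion stopped on exit from $(a,b)$), and splitting off the event $\{\tau=t\}$ on which $Y_t\in(a,b)$ so $w^\beta(Y_t)<0$, one obtains
\[
\Eps{x}{w^\beta(Y_\tau)\1_{\tau<t}}-w^\beta(x)\;=\;\int_{(a,b)}\!\!\Eps{x}{L^y_\tau}\,\ax(dy)\;-\;\Eps{x}{w^\beta(Y_t)\1_{\tau=t}}.
\]
Meanwhile, the supermartingale deficit $D:=v^\xi(t,x)-\Eps{x}{v^\xi(t-\tau,Y_\tau)}\ge 0$, rewritten via \eqref{eq:vxiIdent} and Fubini, equals $\int_{\bI}\xi(dr,dy)\,\Eps{x}{(L^y_\tau-L^y_{r\wedge\tau})}$; since $Y$ is confined to $[a,b]$ up to $\tau$ we have $L^y_\tau=0$ for $y\notin[a,b]$, so after marginalising $\xi$ in time, $D\le\int_{(a,b)}\Eps{x}{L^y_\tau}\,\ax(dy)$. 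Combining,
\[
u^\beta(t,x)-v^\xi(t,x)-w^\beta(x)\;\ge\;-D+\int_{(a,b)}\!\!\Eps{x}{L^y_\tau}\ax(dy)+\Eps{x}{-w^\beta(Y_t)\1_{\tau=t}}\;\ge\;\Eps{x}{-w^\beta(Y_t)\1_{\tau=t}}\;>\;0,
\]
because $\p^x(\tau=t)=\p^x(H_{a,b}\ge t)>0$ for any $x\in(a,b)$, $t>0$, and $w^\beta<0$ on $(a,b)$. This forces $(t,x)\notin\R^\beta$, as required. The main obstacle is the comparison $D\le\int_{(a,b)}\Eps{x}{L^y_\tau}\ax(dy)$: it hinges on the confinement of $Y$ to $[a,b]$ up to $\tau$, which automatically annihilates any contribution to $D$ coming from $\xi$-mass with spatial coordinate outside $[a,b]$.
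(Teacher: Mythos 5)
Your proof is correct and follows the same route as the paper's: the same test rule $\tau=H_{a,b}\wedge t$, dropping $\1_{\{\tau<t\}}$ for strictness, and your local-time comparison $D\le\int_{(a,b)}\Eps{x}{L^y_\tau}\ax(dy)$ is exactly the submartingale property of $v^\xi(t-s,Y_s)+w^\beta(Y_s)$ up to $H_{a,b}\wedge t$ that the paper deduces from the $\Lc$-identity in Lemma~\ref{lem:vxi-immediate} together with the affineness of $U^\beta$ on $(a,b)$. You also verify the boundary-absorption axiom of Definition~\ref{def:barrier} explicitly, which the paper leaves implicit; one small bookkeeping point is that your closed form $D=\int\xi(dr,dy)\,\Eps{x}{L^y_\tau-L^y_{r\wedge\tau}}$ inherits what appears to be a typo in the paper's \eqref{eq:vxiIdent} (along the backward path $(t-s,Y_s)$ the inner time integral should run to $t-s$ rather than $s$, giving $D=\int\xi(dr,dy)\,\Eps{x}{L^y_{\tau\wedge(t-r)^+}}$), but either expression is dominated by $\Eps{x}{L^y_\tau}$, so the bound on which your argument hinges is unaffected.
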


\begin{proof}
For $(t,x) \in \R^\beta$, we have $u^\beta(t,x) = v^\xi(t,x) + w^\beta(x)$ and it is then immediate from \ref{item:v-xi-u-beta-ineq} and \ref{item:u-less-b-decreasing} of Lemma~\ref{lem:opt-stop-prop} that  $u^\beta(t',x) = v^\xi(t',x) + w^\beta(x)$ and so $(t',x) \in \R^\beta$, for all $t' > t$. By the continuity of $v^\xi$ and $u^\beta$, established in Lemmas \ref{lem:vxi-immediate} and \ref{lem:opt-stop-prop}, we conclude that $\R^\beta$ is a closed barrier.
\\
(i) For $x\notin (\ell^\beta,r^\beta)$, we have $U^\ax(x)=U^\beta(x)$ and hence $w^\beta(x)=0$. It follows from Lemma \ref{lem:opt-stop-prop} \ref{item:v-xi-u-beta-ineq} that $u^\beta(t,x)=v^\xi(t,x)$ and hence $(t,x)\in \R^\beta$ for all $t\geq 0$ so that $[0,\infty]\times(\ell^\beta,r^\beta)^c \subset{\cal R}^\beta$. 
\\
(ii) In the proof of Lemma \ref{lem:opt-stop-prop} (iii), it was shown that the condition ${\cal R}^\beta\cap ([0,\infty]\times(a,b))=\emptyset$ implies that $U^\beta$ is linear on $(a,b)$, i.e. $\beta[(a,b)]=0$, see \eqref{Ubetalinear}. Moreover, the last argument in (i) above also implies that $w^\beta(x) <0$ for all $x \in (a,b)$ whenever ${\cal R}^\beta\cap ([0,\infty]\times(a,b))=\emptyset$. This provides the implication $\Longrightarrow$. 

Suppose now that $\beta[(a,b)]=0$ and $w^\beta<0$ on $(a,b)$. For fixed $x \in (a,b)$, we have:
\begin{align*}
  u^\beta(t,x) & \ge \Eps{x}{v^\xi(t-H_{a,b}\wedge t, Y_{H_{a,b}\wedge t}) + w^\beta(Y_{H_{a,b}\wedge t})\1_{\{H_{a,b}<t\}}}\\
  & > \Eps{x}{v^\xi(t-H_{a,b}\wedge t, Y_{H_{a,b}\wedge t}) + w^\beta(Y_{H_{a,b}\wedge t})}\\
  & \ge v^\xi(t,x) - U^{\ax}(x) + U^\beta(x) = v^\xi(t,x) + w^\beta(x).
\end{align*}
Here we have used the strict inequality $w^\beta(y)<0$ for all $y \in
(a,b)$ to get the second line. To get the final line, we use Lemma~\ref{lem:vxi supermart} to deduce that
$\Lc v^\xi(t,dx) = -\eta(x)^2\int_0^t \xi(ds, dx) \ge -\eta(x)^2\alpha^\xi(dx) = \Lc
U^{\ax}(dx)$, and hence that $v^\xi(t-s,Y_s) + w^\beta(Y_s)$ is a
submartingale up to $H_{a,b}\wedge t$, given that $U^\beta(x)$ is
linear on $(a,b)$.

This shows that $u^\beta(t,x) >v^\xi(t,x) + w^\beta(x)$, and hence $(t,x) \not\in \R^\beta$, for all $t \ge 0$, and $x \in (a,b)$.
\\
(iii) If $w^\beta(x) = 0$ then $u^\beta(t,x) = v^\xi(t,x)$ for all $t$,
by (iii) of Lemma~\ref{lem:opt-stop-prop}, and so $(t,x) \in \R^\beta$ for
all $t \ge 0$. Recalling that $v^\xi(0,x) = u^\beta(0,x) =
\pmu{0}(x)$, we conclude that $(0,x) \in \R^\beta$ only if $w^\beta(x)
= 0$.

\end{proof}

\begin{remark}[On ${\cal R}^\beta$ having rays for arbitrary large $|x|$]\label{rk:many_rays}
  We can now deduce from the proof of the convergence $u^\beta\searrow U^\beta$, as $t\nearrow \infty$ in Lemma~\ref{lem:opt-stop-prop} \ref{item:v-xi-u-beta-ineq}, that for any $N>0$ there exist $x\le (-N)\vee a_\I <N\wedge b_\I\le y$ such that $\ovl t^\beta(x)<\infty$ and $\ovl t^\beta(y)<\infty$. In the proof, we show that for any point $x$ such that $\ovl t^\beta(x)=\infty$ either there exists points  $a < x < b$ such that $\ovl t^\beta(a),\ovl t^\beta(b)<\infty$ or there exists an $a$ less than $x$ such that for any $c$ large enough $U^\beta$ is linear on $(a,c)$. Letting $c \to \infty$, and using the fact that $U^\beta(c) + |c| \to 0$, we conclude that $U^\beta(y) = -|y|$ for all $y \ge a$. Then $U^\beta(y)\leq U^{\ax}(y)\leq U^{\mu_0}(y)\leq -|y|=U^\beta(y)$ implies $U^\beta(y)=U^\ax(y)$. In particular, $w^\beta(x) = 0$, and by Corollary \ref{cor:R-is-barrier} we contradict the initial assumption that $x$ is not in the barrier.
\end{remark}

\begin{remark}[On the structure of the stopping region] \label{rk:single-interval}
  Let $\ax, \beta$ be integrable measures in convex order. It follows from Corollary \ref{cor:R-is-barrier} that the barrier can be divided into at most countably many (possibly infinite) non-overlapping open intervals $J_1, J_2, J_3, \dots$ such that $J_k = (a_k,b_k)$, for $a_k < b_k$, on which $\ovl t^\beta(x) > 0$ for all $x \in (a_k,b_k)$ and $\left(\left( \bigcup_{k=1}^\infty J_k\right)^\complement \times [0,\infty] \right) \subseteq\R^\beta$.

Observing that in both the embedding, and the optimal stopping perspectives, the process started from $x\in J_k$ never exits each interval $J_k$, it is sufficient to consider each interval separately, noting that in such a case, $u^\beta(t,x)  = v^\xi(t,x)$ for all $t \ge 0$, and all $x \in \left(\bigcup_{k=1}^\infty J_k\right)^\complement$. In the subsequent argument, we will assume that we are on a single such interval $J_k$, which may then be finite, semi-infinite, or equal to $\iI$.  In addition, if the measures $\ax, \beta$ are in convex order, then their restrictions to each $J_k$ are also in convex order.

\end{remark}

\begin{remark}[On ${\cal R}^\beta$ for atomic measures]\label{rk:D_for_atoms}
  Let $\ax, \beta$ be integrable measures in convex order. Bearing in mind Remark~\ref{rk:single-interval}, we suppose that $\beta$ is a probability measure on $\bI$ such that for some integer $n'\ge 1$, and some ordered scalars $x_1<\ldots<x_{n'}$, we have $\sum_{i=1}^{n'}\beta[\{x_i\}]=1$ and $\beta[\{x_i\}]>0$ for all $i=1,\ldots, {n'}$. From the representation of the optimal stopping time $\tau^t$, see Proposition~\ref{prop:existence} above, and the form of the set ${\cal R}^\beta$ implied by Corollary \ref{cor:R-is-barrier}, it follows that
 \begin{eqnarray}\label{eq:u_optstop2}
 u^\beta(t,x)
 &=&
 \sup_{\tau\in\T(x_1,\ldots,x_{n'})}
 \e^x\left[v^\xi(t-\tau,Y_\tau)
           +w^{\beta}(Y_t)\mathbf{1}_{\{\tau< t\}}
     \right],
 \end{eqnarray}
where $\T(x_1,\ldots,x_{n'})$ is the set of stopping times $\tau$ such that $\tau\leq H_{x_1,x_{n'}}$ and $Y_\tau\in \{x_1,\ldots x_{n'}\}$ a.s.
\end{remark}

\section{Locally finitely supported measures}
\label{sect:finiteatoms}
\setcounter{equation}{0}

A probability measure $\beta$ is said to be $\ax-$locally finitely supported if its support intersects any compact subset of $\supp(\ax,\beta) = \ol{\{x: U^{\ax}(x) > U^\beta(x)\}}$ at a finite number of points. The measure $\beta$ is $\ax-$finitely supported if its support intersects $\supp(\ax,\beta)$ at a finite number of points. Throughout, $\ax$ will be fixed, so we will typically only refer to (locally) finitely supported measures. Observe that an integrable, centred measure $\beta$ can only be finitely supported if $\ell^\beta$ and $r^\beta$ are both finite --- indeed, in this case a locally finitely supported measure is finitely supported if and only if $r^\beta$ and $\ell^\beta$ are both finite.

\subsection{Preparation}

We start with two preliminary results which play crucial roles in the next section where we establish the main result for finitely supported measures. %Lemmas~\ref{lem:2points} and \ref{lem:induction}.
The first result is the key behind the time-reversal methodology which underpins the main results, see Section \ref{subsec:mainresult}. Here, we give a natural proof in the case where $X=B$ is a Brownian motion, when the proof has a simple intuition\footnote{Given its importance, we have discussed this result with many colleagues. Our first proof used an explicit formula for the density $p_c$ in $\p_c^x\big[B_s\in \td y,s<H_{0,c}\big]=p_c(s,x,y)dy$, see Proposition~2.8.10 p.98 in \cite{KaratzasShreve}. The current proof uses a clever coupling trick devised by Tigran Atoyan.}. In Appendix~\ref{ap:MarkovMartingales} we give a PDE proof which works in the more general diffusion setting. 

To understand the importance of the result, it is helpful to think of the local time of $X$ and of $Y$ on the two sides of the announced equality. This result is then used to obtain the key equality $v^{\xi^\beta}=u^\beta$ in a ``box" setting where the barrier is locally composed of two rays. The case of finitely supported measures is then obtained with an inductive argument in Section \ref{sec:finitelysupported}. 
\begin{lemma}\label{lem:box}
Let $L$ be the local time of a Brownian motion $B$. For any $a<x<y<b$ and $t\geq 0$ we have
$\e^x\big[ L^y_{t\land H_{a,b}}\big]=\e^y\big[ L^x_{t\land H_{a,b}}\big].$
\end{lemma}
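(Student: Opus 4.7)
The plan is to express both sides of the claimed identity in terms of the transition density $p^{(a,b)}_s$ of Brownian motion killed at the exit from $(a,b)$, and then to prove the symmetry $p^{(a,b)}_s(x,y) = p^{(a,b)}_s(y,x)$ via time-reversal on Brownian bridges. First, applying the local-time identity \eqref{eq:1} to the Brownian motion killed at $H_{a,b}$ (equivalently, using the occupation-time formula in the limiting form $L^y_{t\wedge H_{a,b}} = \lim_{\eps\downarrow 0}\frac{1}{2\eps}\int_0^{t\wedge H_{a,b}}\1_{\{|B_s-y|<\eps\}}\,ds$ together with Fubini) yields
\[
\e^x\bigl[L^y_{t\wedge H_{a,b}}\bigr] \;=\; \int_0^t p^{(a,b)}_s(x,y)\,ds, \qquad p^{(a,b)}_s(x,y)\,dy \;:=\; \p^x[B_s\in dy,\;H_{a,b}>s],
\]
and the symmetric identity for $\e^y[L^x_{t\wedge H_{a,b}}]$. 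Thus the claim reduces to showing $p^{(a,b)}_s(x,y) = p^{(a,b)}_s(y,x)$ for every $s\in(0,t]$.

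For this symmetry I would invoke Brownian-bridge time reversal. The free transition density $p_s(x,y)=(2\pi s)^{-1/2}e^{-(x-y)^2/(2s)}$ is manifestly symmetric in its spatial arguments. Conditional on $B_0=x$ and $B_s=y$, Brownian motion is a Brownian bridge of length $s$ from $x$ to $y$; the reversed process $r\mapsto B_{s-r}$ is then a Brownian bridge from $y$ to $x$ of the same length, a classical fact easily verified at the level of finite-dimensional Gaussian distributions. Since the event $\{B_r\in(a,b)\text{ for all }r\in[0,s]\}$ is invariant under the reversal $r\mapsto s-r$, the conditional probability of staying in $(a,b)$ given the endpoints is symmetric in $(x,y)$. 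Multiplying this conditional probability by the symmetric free density yields $p^{(a,b)}_s(x,y)=p^{(a,b)}_s(y,x)$, and integrating over $s\in[0,t]$ concludes the argument.

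The main technical obstacle is making the bridge time-reversal statement rigorous as an identity of regular conditional distributions, though this is classical. In the coupling spirit apparently used by the authors, one could instead try to couple $B^x$ and $B^y$ by reflection about the midpoint $z=(x+y)/2$ up to the meeting time $T^* = \inf\{s : B^x_s = z\}$, and then glue the trajectories thereafter. A short computation reveals the pleasant conservation law $L^y_s(B^y)+L^x_s(B^y) = L^y_s(B^x)+L^x_s(B^x)$ for all $s$ under this coupling. The delicate point is that the exit times from $(a,b)$ typically differ under this coupling, so one would need a careful case analysis at the first boundary crossing of either process, together with the strong Markov property at the meeting time $T^*$, in order to cancel the unmatched boundary contributions and convert this conservation law into the desired equality of expected local times up to $t\wedge H_{a,b}$.
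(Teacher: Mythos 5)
Your main argument is correct, but it proceeds by a genuinely different route from the paper's. You reduce the identity to the spatial symmetry $p^{(a,b)}_s(x,y)=p^{(a,b)}_s(y,x)$ of the killed Brownian transition density and establish it via time-reversal of Brownian bridges; this is a clean, standard argument, and it is close in spirit to the authors' \emph{original} proof, which (as they note in a footnote) invoked the explicit formula for the killed density from Karatzas--Shreve, Prop.~2.8.10. The proof actually printed in the paper is different: it is a pathwise coupling (credited to T.~Atoyan) that first uses translation and reflection invariance to replace $\e^y[L^x_{t\wedge H_{a,b}}]$ by $\e^x[L^y_{t\wedge H_{c,d}}]$ for suitably chosen $c<a<d<b$ with $b-d=a-c$, reduces the claim to an identity of local-time increments between two nested exit times, and then constructs two Brownian motions $B^{(1)},B^{(2)}$ by gluing pieces of three independent Brownian motions so that the relevant increments agree pathwise. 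Your density/bridge argument buys you brevity and also adapts naturally to the diffusion generalisation in the appendix (Lemma~\ref{lem:boxMarkov}) if one replaces Lebesgue symmetry by speed-measure symmetry, i.e.\ $\eta(y)^2\,p^{(a,b)}_s(x,y)=\eta(x)^2\,p^{(a,b)}_s(y,x)$, which is exactly what is needed once $\e^x[L^y_t]=\eta(y)^2\int_0^t p(s,x,y)\,ds$; the paper instead proves that lemma by a PDE/viscosity argument. The authors' coupling is more elementary (no density theory, no bridge conditioning) but considerably more delicate to state precisely. One remark on your closing sketch: the reflection coupling about the midpoint gives the conservation law you state, but because $(a,b)$ is not symmetric about $(x+y)/2$ the exit times do not match, so it yields only the unstopped identity directly, as you acknowledge; that sketch is also \emph{not} the coupling the paper uses, which is of a different (gluing) type.
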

\begin{figure}[th]
  \centering
  \includegraphics[width=\textwidth]{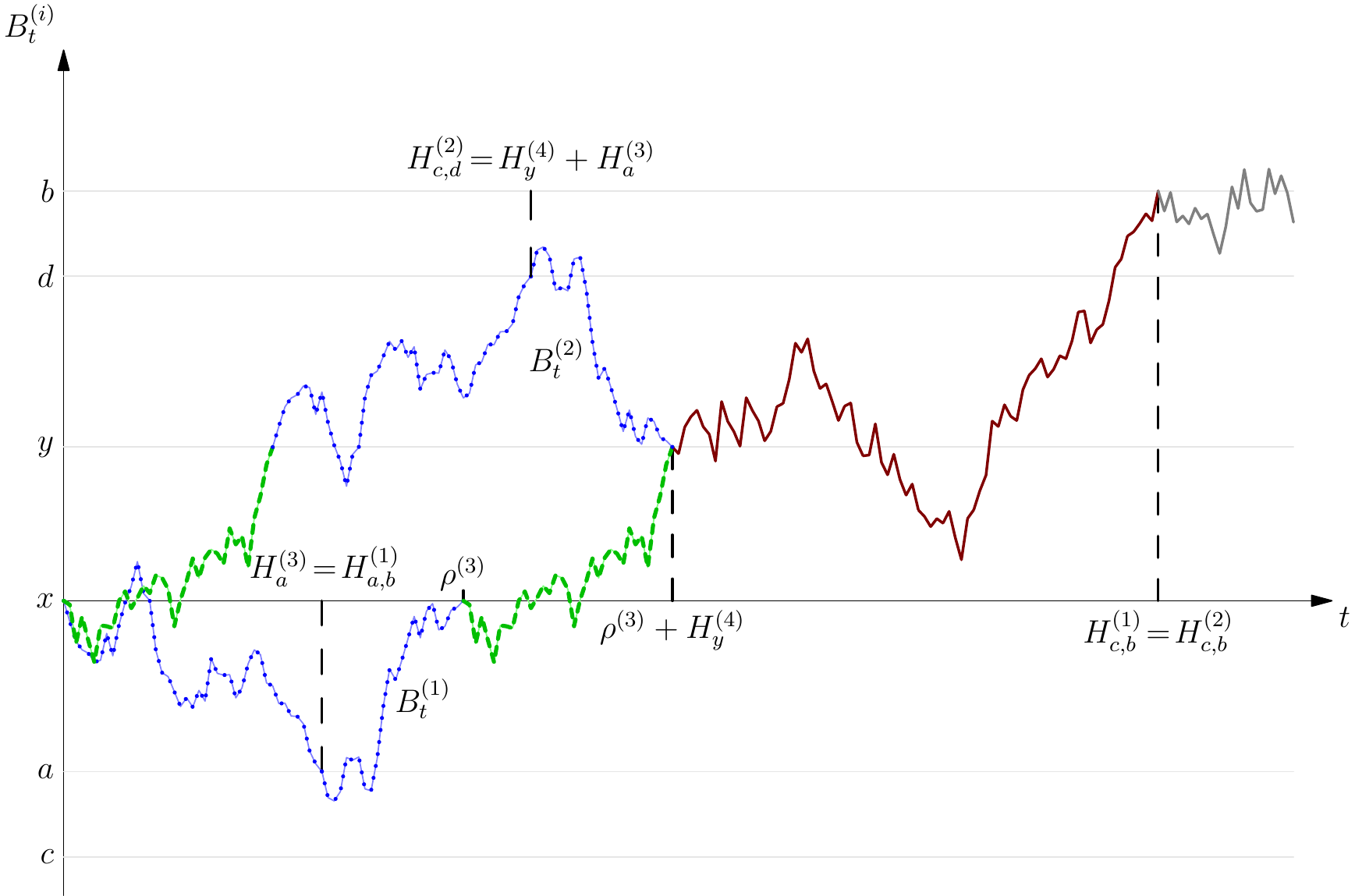}
  \caption{A depiction of the Brownian motions $B^{(1)}$ and $B^{(2)}$ constructed in the proof of Lemma~\ref{lem:box}. Observe that the blue and green sections in each process are mirror images, up to translation, while the magenta sections are equal, up to translation.}
  \label{fig:box}
\end{figure}

\begin{proof}
Without loss of generality we suppose $b-y>x-a$ and introduce two additional points $c=x-(b-y)$ and $d=y+(x-a)$ so that $c<a<x<y<d<b$ with $b-d=a-c$. Note that by translation invariance and symmetry of Brownian motion we have
$$ \e^y\big[ L^x_{t\land H_{a,b}}\big]=\e^x\big[ L^y_{t\land H_{c,d}}\big].$$
Using this in the desired equality, and subtracting $\e^x\big[ L^y_{t\land H_{c,b}}\big]$, we see that it suffices to show that
$$ \e^x\big[ L^y_{t\land H_{c,b}}- L^y_{t\land H_{a,b}}\big] = \e^x\big[ L^y_{t\land H_{c,b}}- L^y_{t\land H_{c,d}}\big].
$$
Finally, by shift invariance, we may suppose without loss of generality that $x=0$. 
Consider three independent Brownian motions $B^{(3)}, B^{(4)}, B^{(5)}$ starting from $0$ and denote $H^{(i)}$ the hitting times for $B^{(i)}$. Further, let $\rho^{(3)}=\inf\{t\geq H^{(3)}_a: B^{(3)}_t=0\}$. Define two new processes 
\begin{equation}
\begin{split}
 B^{(1)}_t &:= B^{(3)}_{t\land \rho^{(3)} } + B^{(4)}_{(t-t\land \rho^{(3)})\land H^{(4)}_y} + B^{(5)}_{t-t\land (\rho^{(3)}+H^{(4)}_y)}\\
 B^{(2)}_t & := B^{(4)}_{t\land H^{(4)}_y}-B^{(3)}_{(t-t\land H^{(4)}_y)\land \rho^{(3)}} + B^{(5)}_{t-t\land (\rho^{(3)}+H^{(4)}_y)}
\end{split}
\end{equation}
and observe these are standard Brownian motions. This construction is depicted in Figure~\ref{fig:box}. We denote $L^{y,(i)}$ the local time of $B^{(i)}$ at level $y$.

Recall that $c<a<d<b$ and consider $L^{y,{(1)}}_{t\land H^{(1)}_{c,b}}- L^{y,{(1)}}_{t\land H^{(1)}_{a,b}}$. For this quantity to be non-zero the following have to happen prior to $t$: first $B^{(1)}$ has to hit $a$ without reaching $b$, then it has to come back to $x=0$ and continue to $y$ without ever reaching $c$. This happens at time $\rho^{(3)}+H^{(4)}_y$ and from then onwards  the local time $L^{y,{(1)}}$ is counted before time $t\land H^{(1)}_{c,b}$ and we see that it simply corresponds to $L^{0,(5)}$. With a similar reasoning for $L^{y,{(2)}}$, we see that our construction gives us the desired coupling:
$$ L^{y,{(1)}}_{t\land H^{(1)}_{c,b}}- L^{y,{(1)}}_{t\land H^{(1)}_{a,b}} = L^{0,(5)}_{(t-t\land (\rho^{(3)}+H^{(4)}_y))\land H^{(5)}_{c-y,b-y}} =  L^{y,{(2)}}_{t\land H^{(2)}_{c,b}}- L^{y,{(2)}}_{t\land H^{(2)}_{c,d}}
$$
and taking expectations gives the required result.
%For an arbitrary bounded Borel measurable function $g:\re\longrightarrow\re$, it follows from the density occupation formula that:
%$$\int g(y)L^y_{t\wedge H_{a,b}}dy
% =
% \int_0^{t\wedge H_{a,b}} g(X_s)ds.
%$$
%Denote $c:=b-a$. Taking expectations and using the Fubini theorem, this provides:
% \begin{equation}
%\begin{split}
% \int g(y)\e^x\big[L^y_{t\wedge H_{a,b}}\big]dy
% &=
% \int_0^t \e^x\big[g(X_s)\1_{\{s<H_{a,b}\}}\big]ds
% \\
% &=
% \int_0^t \e^{x-a}\big[g(a+X_s)\1_{\{s<H_{0,b-a}\}}\big]ds
% \\
% &=
% \int_0^t \int g(y)p_c(s,x-a,y-a)dy\;ds
% \\
% &=
% \int g(y)\int_0^t p_c(s,x-a,y-a)ds\;dy,
% \end{split}
%\end{equation}
%where, by Proposition~2.8.10 p98 of \cite{KaratzasShreve}, we have $\p_c^x\big[X_s\in \td y,s<H_{0,c}\big]=p_c(s,x,y)dy$ for some density function $p_c$ given by
% \begin{eqnarray*}
% p_c(s,x,y)
% =
% \sum_{n=-\infty}^\infty \big\{f\big(s,x-y-2n c\big)-f\big(s,x+y+2n c\big)\big\}
% &\mbox{with}&
% f(s,z):=\frac{e^{-z^2/2s}}{\sqrt{2\pi s}}.
% \end{eqnarray*}
%By the arbitrariness of the function $g$, this implies that
% $$
% \e^x\big[L^y_{t\wedge H_{a,b}}\big]
% =
% \int_0^t p_c(s,x-a,y-a)ds.
% $$
%By the symmetry of the centered Gaussian density $f$, it is immediately checked that $p_c(s,x,y)=p_c(s,y,x)$ for all $x,y\in(0,c)$. Then,
%$$
% \e^x\big[L^y_{t\wedge H_{a,b}}\big]
% =
% \int_0^t p_c(s,x-a,y-a)ds
% =
% \int_0^t p_c(s,x-a,y-a)ds
% = \e^y\big[L^x_{t\wedge H_{a,b}}\big]
%.
% $$
\end{proof}

We now prove an important consequence of the above result, which will form the basis of an induction argument.

\begin{lemma} \label{lem:new_box} 
Let $\sx\in\T$ with corresponding time-space distribution $\xi$, and $\ax \preceq_{\text{\rm cx}} \beta$. Let  $a<b$ and $t_0>0$ be such that $[t_0,\infty]\times\{a,b\}\subset\R^\beta$, $(0,\infty)\times (a,b)\cap\R^\beta  = \emptyset$, and $(v^{\xi^\beta}-u^\beta)(t_0,\cdot)=0$ on $[a,b]$. Then $v^{\xi^\beta}-u^\beta=0$ on $[t_0,\infty)\times[a,b]$.
\end{lemma}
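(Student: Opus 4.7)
The plan is to set $\varphi := v^{\xi^\beta} - u^\beta$ and to show that $\varphi \equiv 0$ on the cylinder $[t_0, \infty) \times [a, b]$ by verifying three facts in turn: (i) $\Lc\varphi = 0$ on the interior $(t_0, \infty) \times (a, b)$; (ii) $\varphi$ vanishes on the lateral boundary $\{a, b\} \times [t_0, \infty)$; (iii) $\varphi$ vanishes on the bottom $\{t_0\} \times [a, b]$, which is the hypothesis. A martingale representation along the backward process $Y$ will then conclude.

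For (i), I would first treat $u^\beta$: since $\R^\beta$ avoids $(0, \infty) \times (a,b)$ by assumption, Proposition~\ref{prop:existence} gives that $u^\beta(t-s, Y_s)$ is a genuine $\p^x$-martingale on $[0, \tau^t]$, so $\Lc u^\beta = 0$ on this strip in the distributional sense. For $v^{\xi^\beta}$, I would apply (the proof of) Lemma~\ref{lem:vxi-immediate} to the stopping time $\sigma^{\xi^\beta}$, yielding $\Lc v^{\xi^\beta}(t, dx) = -\int_0^t \eta(x)^2 \xi^\beta(ds, dx)$. Because $\R^\beta$ is closed and does not meet $(0,\infty) \times (a,b)$, no point $(0,y)$ with $y \in (a,b)$ can lie in $\R^\beta$ either, so $\xi^\beta$ puts no mass on $[0, \infty) \times (a,b)$ and $\Lc v^{\xi^\beta} = 0$ there.

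For (ii), the identity \eqref{vxibeta-vxi} gives $v^{\xi^\beta}(t, y) - v^\xi(t, y) = -\Eps{\xi}{L^y_{t \wedge \sigma_{\R^\beta}}}$, while for $y \in \{a, b\}$ and $t \geq t_0$ the inclusion $(t, y) \in \R^\beta$ gives $u^\beta(t, y) = v^\xi(t, y) + w^\beta(y)$. Hence $\varphi(t, y) = -\Eps{\xi}{L^y_{t \wedge \sigma_{\R^\beta}}} - w^\beta(y)$, and the hypothesis $\varphi(t_0, y) = 0$ pins down $\Eps{\xi}{L^y_{t_0 \wedge \sigma_{\R^\beta}}} = -w^\beta(y)$. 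The key is to show $L^y_{t \wedge \sigma_{\R^\beta}} = L^y_{t_0 \wedge \sigma_{\R^\beta}}$ $\p^\xi$-a.s. for $y \in \{a,b\}$ and $t \geq t_0$: any time $s \in [t_0, \sigma_{\R^\beta})$ at which $X_s = y$ would contradict the minimality of $\sigma_{\R^\beta}$ since $(s, y) \in \R^\beta$; combined with the fact that local time at a level does not accumulate at its first hitting instant, no local time at $y$ is added after $t_0$. Thus $\varphi(t, y) = \varphi(t_0, y) = 0$ for all $t \geq t_0$.

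For (iii), fix $(t, x) \in (t_0, \infty) \times (a, b)$ and define $\tau := H^Y_{a,b} \wedge (t - t_0)$. For $s \in [0, \tau)$, $Y_s \in (a,b)$ and $t - s > t_0$, so $(t-s, Y_s)$ lies in the continuation region and $\tau \leq \tau^t$; by (i), both $u^\beta(t-s, Y_s)$ and $v^{\xi^\beta}(t-s, Y_s)$ are $\p^x$-martingales on $[0, \tau]$ (using $\Lc v^{\xi^\beta}=0$ together with It\^o), hence $\varphi(t, x) = \Eps{x}{\varphi(t - \tau, Y_\tau)}$. On $\{H^Y_{a,b} \leq t - t_0\}$ we have $Y_\tau \in \{a,b\}$ and $t - \tau \geq t_0$, so the integrand vanishes by (ii); on the complementary event, $Y_\tau \in (a,b)$ and $t - \tau = t_0$, so the integrand vanishes by the lemma's hypothesis. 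The main technical obstacle is the justification of $\Lc v^{\xi^\beta} = 0$, since Lemma~\ref{lem:vxi-immediate} is stated for UI stopping times but UI of $\sigma^{\xi^\beta}$ is precisely the property we are building up to; this is handled either by noting that the proof of Lemma~\ref{lem:vxi-immediate} only uses concavity, Lipschitz continuity, monotonicity in $t$ and a mollification argument (all of which remain valid for a general stopping time), or by truncating $\sigma^{\xi^\beta}$ at some $T$, applying the lemma to the UI stopping time $\sigma^{\xi^\beta} \wedge T$, and letting $T \to \infty$ using monotone convergence of the local times.
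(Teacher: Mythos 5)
Your proof is correct, and it takes a genuinely different route from the paper's. The paper reduces the claim, via Remark~\ref{rem:ubeta-vxi} and identity \eqref{vxibeta-vxi}, to an explicit local-time identity \eqref{eq:9}, then decomposes $\Eps{\xi}{L^x_{t\wedge\srb}} - \Eps{\xi}{L^x_{t_0\wedge\srb}}$ using an auxiliary measure $m(dy)$, rewrites $m$ in terms of $D^2(v^\xi-u^\beta)(t_0,\cdot)$, and establishes the three identities \eqref{eq:15}, \eqref{eq:16}, \eqref{eq:20} by It\^o--Tanaka; crucially, the last two of these invoke Lemma~\ref{lem:box} (respectively Lemma~\ref{lem:boxMarkov}) to convert $\Eps{x}{L^y_\zeta}$ along the backward process into $\Eps{(t_0,y)}{L^x_{t\wedge H_{a,b}}}$ along the forward one. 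You instead set $\varphi := v^{\xi^\beta}-u^\beta$, verify $\Lc\varphi = 0$ in the interior (using the martingale property of $u^\beta(t-s,Y_s)$ from Proposition~\ref{prop:existence} together with \eqref{eq:vxiIdent} from Lemma~\ref{lem:vxi-immediate} applied to $\xi^\beta$, which kills the drift since $\xi^\beta$ charges no mass on $[0,\infty)\times(a,b)$), show $\varphi$ vanishes on the lateral boundary by observing that $L^y_{\cdot\wedge\srb}$ is constant after $t_0$ when $y\in\{a,b\}$ (any hit of $y$ at a time $s\ge t_0$ would force $\srb\le s$), and conclude by the martingale representation $\varphi(t,x)=\Eps{x}{\varphi(t-\tau,Y_\tau)}$ with $\tau = H^Y_{a,b}\wedge(t-t_0)$. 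The net effect is that you bypass Lemma~\ref{lem:box}/\ref{lem:boxMarkov} entirely, working with the backward process $Y$ throughout rather than translating between forward and backward local times; the reversibility the paper isolates in that lemma is instead absorbed into the proof of \eqref{eq:vxiIdent}. The one technical point you rightly flag --- that Lemma~\ref{lem:vxi-immediate} is stated under the standing assumption $\sx\in\T$, whereas the UI of $\sigma^{\xi^\beta}$ is exactly what Theorem~\ref{thm:main_general_start} is building towards --- is genuine, and both of your suggested remedies work; for the truncation route, note that one should truncate at some $T>t$ so that the artificial mass placed by $\xi^T := \mathrm{law}\,\bigl(\sigma^{\xi^\beta}\wedge T,\, X_{\sigma^{\xi^\beta}\wedge T}\bigr)$ on $\{T\}\times(a,b)$ does not contribute to the integral $\int_0^s\xi^T(dr,dy)$ for $s\le t$, and then pass to the limit $T\to\infty$ by monotone convergence.
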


\begin{proof}
In view of Remark \ref{rem:ubeta-vxi}, and the continuity of $v^{\xi^\beta}-u^\beta$, it is sufficient to show that 
  \begin{equation}
    \label{eq:9}
    v^\xi(t,x) - u^{\beta}(t,x) +u^{\beta}(t_0,x) - v^\xi(t_0,x) = \Eps{\xi}{L_{t\wedge \srb}^x} - \Eps{\xi}{L_{t_0\wedge \srb}^x}
    ~\mbox{for}~t \ge t_0,~x \in (a,b).
  \end{equation}
  We fix $x \in (a,b)$.  Since $[t_0,\infty]\times\{a,b\}\subset\R^\beta$, $(0,\infty)\times (a,b)\cap\R^\beta = \emptyset$, we have the decomposition
  \begin{eqnarray}
    \Eps{\xi}{L_{t\wedge \srb}^x}
    - \Eps{\xi}{L_{t_0\wedge \srb}^x}
    &=&
    \Eps{\xi}{\big(L_{t \wedge \srb}^x - L_{t_0 \wedge \srb}^x\big) \1_{\{T_{\xi} < t_0\}}} 
    \nonumber\\
    && \qquad{}+ \Eps{\xi}{\big(L_{t \wedge \srb}^x-L_{T_{\xi}\wedge t}^x\big) 
                                      \1_{\{t_0 \le  T_\xi < t, X_{T_\xi} \in (a,b)\}} }
    \nonumber\\
    &=&
    \Eps{\xi}{\big(L_{t \wedge \srb}^x - L_{t_0 \wedge \srb}^x\big) 
                   \1_{\{T_{\xi} < t_0<\srb\}}} 
    \nonumber\\
    && \qquad{}+ \Eps{\xi}{\big(L_{t \wedge H_{a,b}}^x-L_{T_{\xi}\wedge t}^x\big) 
                                          \1_{\{t_0 \le  T_\xi < t, X_{T_\xi} \in (a,b)\}} }
    \nonumber\\
    &\!\!\!\!\!\!\!\!\!\!\!\!=& \!\!\!\!\!\!\!\!\!\!\!\!
    \int_{(a,b)}\!\! \Eps{(t_0,y)}{L_{t\wedge H_{a,b}}^x} \!m(dy) 
    + \!\!\int_{[t_0,t]} \int_{(a,b)} \!\!\Eps{(s,y)}{L_{t\wedge H_{a,b}}^x} \!\xi(ds,dy),
    \label{decompo:vxi-ubeta}
  \end{eqnarray}
where we introduced the measure $m(dy):=\pps{\xi}{X_{t_0} \in dy, T_\xi < t_0 < \srb}$, and used the fact that, conditional on starting in $\{t_0\} \times (a,b)$, the stopping times $\srb$ and $H_{a,b}$ are equal (and starting on $\{t_0\} \times (a,b)^\complement$, we never hit $x$ before $\srb$). Observe that
  for $y \in (a,b)$, we have
  \begin{eqnarray}
    m(dy) + \xi(dy; s \ge t_0) 
    &=& 
    \pps{\xi}{X_{t_0} \in dy, T_\xi < t_0 < \srb} + \pps{\xi}{X_{T_\xi} \in dy, T_{\xi} \ge t_0}
    \nonumber\\
    &=& 
    \pps{\xi}{X_{(t_0\wedge \srb) \vee T_{\xi}} \in dy}
    \;=:\;
    \lambda(dy),
    \label{m(dy)}
  \end{eqnarray}
since $X_{\srb} \not\in (a,b)$ by the assumptions on $\R^\beta$. Moreover, since $\sigma^\xi$ is a UI  embedding of $\ax$, it follows from the Tanaka formula that for $y\in(a,b)$, we have
 \begin{eqnarray*}
 U^{\lambda}(y)
 &=&
 U^{\ax}(y) -\Eps{\xi}{L_{t_0\wedge \srb}^y}
 \;=\;
 U^{\ax}(y) - (v^\xi-u^\beta)(t_0,y),
 \end{eqnarray*}
where the last equality follows from the assumption that $(v^{\xi^\beta}-u^\beta)(t_0,.)=0$ on $[a,b]$ together with Remark \ref{rem:ubeta-vxi}. Since $D^2U^\lambda(dy)=\lambda(dy)$, this provides by substituting in \eqref{m(dy)} that for $y \in (a,b)$:
  $$
  m(dy) 
  =
  -\half D^2 U^{\lambda}(y)dy  - \xi(dy,s \ge t_0)
  = 
  \half D^2 \left(v^\xi- u^\beta\right)(t_0,dy) + \xi(dy,s < t_0).
 $$
Plugging this expression in \eqref{decompo:vxi-ubeta}, we get
  \begin{eqnarray*}
    \Eps{\xi}{L_{t\wedge \srb}^x}
    - \Eps{\xi}{L_{t_0\wedge \srb}^x}
    &=&
    \frac12\int_{(a,b)} \Eps{(t_0,y)}{L_{t\wedge H_{a,b}}^x} D^2(v^\xi-u^\beta)(t_0,dy) 
    \\
    &&\qquad{}+ \int_{[0,t]} \int_{(a,b)} \Eps{(s\vee t_0,y)}{L_{t\wedge H_{a,b}}^x} \xi(ds,dy).
  \end{eqnarray*}
The required result now follows from the following claims involving $\zeta := \inf \{s \ge 0: (t-s, Y_s) \not\in[0,t-t_0]\times (a,b)\}$:
  \begin{eqnarray}
  \int_{(a,b)}\int_{[0,t]} \Eps{(s\vee t_0,y)}{L_{t\wedge H_{a,b}}^x} \xi(ds,dy) 
  &\!\!\!\!=& \!\!\!\!
  v^\xi(t,x)-\Eps{x}{v^\xi(t-\zeta,Y_\zeta)},
  \label{eq:15}
  \\
   \half\int_{(a,b)} \Eps{(t_0,y)}{L_{t\wedge H_{a,b}}^x} D^2 v^\xi(t_0,dy) 
  &\!\!\!\!=& \!\!\!\!
  \Eps{x}{v^\xi(t_0,Y_\zeta)}-v^\xi(t_0,x),
  \label{eq:16}
 \\
  \label{eq:20}
  -\half \int_{(a,b)} \Eps{(t_0,y)}{L_{t\wedge H_{a,b}}^x} D^2 u^\beta(t_0,dy) 
  &\!\!\!\!=& \!\!\!\!
  u^\beta(t_0,x) -u^\beta(t,x)+ \Eps{x}{v^\xi(t-\zeta,Y_\zeta)-v^\xi(t_0,Y_\zeta)},
  ~~~
  \end{eqnarray}
which we now prove.
\\
(i) To prove \eqref{eq:15}, we use It\^o's formula (possibly after mollification) to get
  $$
    v^\xi(t,x) 
   = 
    \Eps{x}{v^\xi(t-\zeta,Y_{\zeta})} 
    + \Eps{x}{\int_0^\zeta \Lc v^\xi(t-s,Y_s) \, ds}.
 $$
Using Lemma~\ref{lem:vxi-immediate} and writing $p_\zeta(r,x,y)dy := \p^x(Y_r \in dy, r < \zeta)$, this provides:
  \begin{eqnarray*}
    v^\xi(t,x)-\Eps{x}{v^\xi(t-\zeta,Y_{\zeta})}
    &=& 
    \int_{y \in (a,b)} \int_{0}^{t-t_0} \eta(y)^2 p_{\zeta}(r,x,y) \, d r \left( -\int_0^{t-r} \xi(ds,dy)\right) \\
    &=&
     \int_{y \in (a,b)} \int_{t_0}^t \eta(y)^2 p_\zeta(t-u,x,y) \, du \left( -\int_0^{u}
      \xi(ds,dy)\right) \\
    &=& 
    \int_{y \in (a,b)} \int_0^t \int_{t_0\vee s}^t \eta(y)^2 p_{\zeta}(t-u,x,y) \, du \,
    \xi(ds,dy)\\
    &=& 
    \int_{y \in (a,b),s \in [0, t]} \Eps{(s\vee t_0,y)}{L_{t\land H_{a,b}}^x}\, \xi(ds, dy).
  \end{eqnarray*}
(ii) We next prove \eqref{eq:16}. Since $v^\xi(t_0,.)$ is concave by Lemma~\ref{lem:vxi-immediate}, it follows from the It\^o-Tanaka formula that:
  $$
    \Eps{x}{v^\xi(t_0,Y_\zeta)}-v^\xi(t_0,x) 
   = 
    \half \int_{(a,b)} \Eps{x}{L_\zeta^y}\, D^2 v^\xi(t_0,dy)  \\
   = \half \int_{(a,b)} \Eps{(t_0,y)}{L_{t \wedge H_{a,b}}^x} \, D^2 v^\xi(t_0,dy),
  $$
where the last equality follows from Lemma~\ref{lem:box} together with a coordinate shift.
\\
(iii) Finally we turn to \eqref{eq:20}. Recall that $u^{\beta}= v^\xi+w^{\beta}$ on $[t_0,\infty]\times\{a,b\}\subset \R^\beta$. Then, since $Y_{\zeta} \in\{a,b\}$ on $\{\zeta < t-t_0\}$, we have:
  \begin{eqnarray*}
    u^{\beta}(t-\zeta,Y_\zeta) 
    &=& 
    u^{\beta}(t_0 ,Y_\zeta)\1_{\{\zeta=t-t_0\}} 
    + \left(v^\xi(t-\zeta,Y_\zeta)+w^{\beta}(Y_{\zeta})\right) \1_{\{\zeta <t-t_0\}}
    \\
    &=& 
    u^{\beta}(t_0,Y_\zeta) \1_{\{\zeta=t-t_0\}} 
    +\left(v^\xi(t-\zeta,Y_\zeta)+w^{\beta}(Y_{\zeta})\right) \1_{\{\zeta<t-t_0\}} 
    \\ 
    && \qquad  \qquad\qquad{} 
    + \left(v^\xi(t-\zeta,Y_\zeta) - v^\xi(t_0,Y_\zeta)\right) \1_{\{\zeta = t-t_0\}}
    \\
    &=& 
    u^{\beta}(t_0,Y_\zeta)\1_{\{\zeta=t-t_0\}} 
    + v^\xi(t-\zeta,Y_\zeta)-v^\xi(t_0,Y_\zeta)
    \\
    && \qquad \qquad\qquad {} 
    + \left(w^{\beta}(Y_{\zeta}) 
    + v^\xi(t_0,Y_\zeta)\right)\1_{\{\zeta<t-t_0\}}
    \\
    &=&
    u^{\beta}(t_0,Y_\zeta)+v^\xi(t-\zeta,Y_\zeta) - v^\xi(t_0,Y_\zeta).
  \end{eqnarray*}
We next use the fact that $[0,\infty]\times(a,b)$ does not intersect $\R^\beta$ to compute for $x\in(a,b)$ that
 \begin{eqnarray*}
    u^\beta(t,x) 
    &=& 
    \Eps{x}{u^{\beta}(t-\zeta,Y_\zeta)}
    \\
    &=& 
    \Eps{x}{u^{\beta}(t_0,Y_\zeta)+\left(v^\xi(t-\zeta,Y_\zeta) - v^\xi(t_0,Y_\zeta)\right)}
    \\
    &=& 
   u^{\beta}(t_0,x) 
   + \half \Eps{x}{\int_{(a,b)} L^y_\zeta D^2 u^{\beta}(t_0,dy)}
    +\Eps{x}{v^\xi(t-\zeta,Y_\zeta) - v^\xi(t_0,Y_\zeta)},
  \end{eqnarray*}
by application of the It\^o-Tanaka formula, due to the concavity of the function $u^\beta(t,.)$, as established in Lemma~ \ref{lem:opt-stop-prop}. We finally conclude from Lemma~\ref{lem:box}/\ref{lem:boxMarkov} that
 \begin{eqnarray*}
    u^\beta(t,x) 
    &=& 
    u^{\beta}(t_0,x) 
   + \half \int_{(a,b)} \Eps{(t_0,y)}{L_{t \wedge H_{a,b}}^x} \, D^2 u^\beta(t_0,dy)
    +\Eps{x}{v^\xi(t-\zeta,Y_\zeta) - v^\xi(t_0,Y_\zeta)}.
  \end{eqnarray*}
\end{proof}

\subsection{The case of finitely supported measures}\label{sec:finitelysupported}

We now start the proof of Theorem \ref{thm:main_general_start} for a (relatively) finitely supported probability measure $\beta$. Recall from Lemma \ref{lem:u-beta-second-prop}  and Lemma \ref{lem:opt-stop-prop} \ref{item:v-xi-u-beta-ineq} that we need to prove that $u^\beta=v^{\xi^\beta}$. When there is no risk of confusion we write $\sib$ for $\srb$. In the sequel, we will say that $\beta$ is $\ax$-supported on $n$ points if the measure $\beta$ restricted to $(\ell^\beta,r^\beta)$ is a discrete measure, supported on $n$ points.

\begin{prop} \label{prop:FinSupp}
Let $\sx\in\T$ with corresponding time-space distribution $\xi$, and $\beta$ an $\ax$--finitely supported measure such that $\ax \preceq_{\text{\rm cx}} \beta$. Then $u^\beta=v^{\xi^\beta}$ and Theorem \ref{thm:main_general_start} holds for $\beta$.
\end{prop}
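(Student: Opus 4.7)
The plan is to combine the explicit structure of $\R^\beta$ for $\alpha^\xi$-finitely supported $\beta$ with an induction on the number of atoms, where the inductive step leverages the box-propagation Lemma~\ref{lem:new_box}. By Corollary~\ref{cor:R-is-barrier}(ii) applied to each open sub-interval $(x_i,x_{i+1})$ between consecutive atoms (where $\beta$ puts no mass and $w^\beta<0$), together with Corollary~\ref{cor:R-is-barrier}(i), I obtain
\[
\R^\beta\cap\bigl([0,\infty]\times(\ell^\beta,r^\beta)\bigr)=\bigcup_{i=1}^n\{x_i\}\times[\bar t^\beta(x_i),\infty],
\qquad[0,\infty]\times(\ell^\beta,r^\beta)^c\subset\R^\beta.
\]
On the exterior of $[\ell^\beta,r^\beta]$, the identity $w^\beta\equiv0$ and Lemma~\ref{lem:opt-stop-prop}(iii) give $u^\beta=v^\xi$, while the fact that $\R^\beta$ covers $[0,\infty]\times(\ell^\beta,r^\beta)^c$ (so $L^x_{\sigma_{\R^\beta}}=0$ $\p^\xi$-a.s.\ for such $x$) combined with Remark~\ref{rem:ubeta-vxi} yields $v^{\xi^\beta}=v^\xi$. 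Thus the desired equality $u^\beta=v^{\xi^\beta}$ holds off $[\ell^\beta,r^\beta]$ and the task reduces to the strip $[0,\infty)\times[\ell^\beta,r^\beta]$.

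I would then induct on $n$. The case $n=0$ forces $\beta=\alpha^\xi$, giving $\R^\beta=\Ss$, $\sigma^{\xi^\beta}=\sigma^\xi$ and $u^\beta=v^\xi=v^{\xi^\beta}$ trivially. For the inductive step, relabel the atoms so that $h:=\bar t^\beta(x_1)$ is the smallest activation height (this reindexing is purely for bookkeeping) and let $\xi'$ denote the $\p^{\mu_0}$-time-space law of $(h\wedge\sigma^{\xi^\beta},X_{h\wedge\sigma^{\xi^\beta}})$. The measure $\xi'$ already records the first-activated ray at $x_1$ and reflects the state of the forward process at time $h$. I would verify that $\alpha^{\xi'}\preceq_{\text{\rm cx}}\beta$, that $\beta$ has at most $n-1$ atoms in the relative-support interval of $(\alpha^{\xi'},\beta)$, and that the restarted embedding produces the same stopping time $\sigma^{\xi^\beta}$. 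Applying the inductive hypothesis to $(\xi',\beta)$ then delivers the equality of the corresponding optimal stopping value with $v^{(\xi')^\beta}$, together with the UI-embedding property.

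The closure translates the restarted result back to the original problem. The inductive equality, combined with the strong Markov property at time $h$, supplies the boundary condition $(v^{\xi^\beta}-u^\beta)(h,\cdot)=0$ on each finest sub-interval adjacent to $x_1$, namely $[x_1^-,x_1]$ and $[x_1,x_1^+]$, where $x_1^\pm$ are the neighbouring partition points (atoms or endpoints of $[\ell^\beta,r^\beta]$). Lemma~\ref{lem:new_box} applied with $t_0=h$ and $(a,b)$ equal to each of these sub-intervals then propagates the equality to all $t\ge h$ on those boxes. Iterating the restart-and-propagate across the remaining activation heights $\bar t^\beta(x_2),\ldots,\bar t^\beta(x_n)$ taken in increasing order, each time partitioning by the next-activated atom and applying Lemma~\ref{lem:new_box} with the appropriate $t_0$, eventually exhausts $[0,\infty)\times[\ell^\beta,r^\beta]$; combined with Lemma~\ref{lem:u-beta-second-prop}, this gives both $u^\beta=v^{\xi^\beta}$ and the UI embedding $X_{\sigma^{\xi^\beta}}\sim\beta$.

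The main obstacle is the proper execution of the restart reduction. Three points require care: (i) showing $\alpha^{\xi'}\preceq_{\text{\rm cx}}\beta$ with strictly fewer atoms in the new relative-support interval, which uses the definition of $\xi'$ and the convex order of the original pair; (ii) identifying the restarted optimal stopping value with $u^\beta$ on $[h,\infty]\times\bI$, via the dynamic programming principle at $h$ and the strong Markov property of the diffusion; and (iii) translating the inductive equality into the boundary condition of Lemma~\ref{lem:new_box} on the whole closed sub-interval adjacent to $x_1$, not merely at the endpoints. The last point is the most delicate, and it is here that the local-time accounting of Lemma~\ref{lem:new_box} --- itself ultimately grounded in the Brownian box identity of Lemma~\ref{lem:box} --- plays the decisive role.
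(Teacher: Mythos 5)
Your overall strategy---reduce to the strip $[0,\infty)\times[\ell^\beta,r^\beta]$, induct on the number of atoms, and propagate via Lemma~\ref{lem:new_box}---is the same as the paper's, but your inductive reduction is genuinely different and, as written, has a circularity that I don't see how to resolve. The paper peels off the atom $x_j$ with the \emph{largest} activation time $t_j=\max_i t_i$ by redistributing $\beta[\{x_j\}]$ onto $x_{j\pm1}$, producing a modified target $\beta^*$ with one fewer atom while keeping $\xi$ fixed; the structural identity $u^\beta=u^{\beta^*}$ on $([0,\infty]\times\bI\setminus I_j)\cup([0,t_j]\times I_j)$ (a consequence of Remark~\ref{rk:D_for_atoms}) and the barrier relation $\R^\beta=\R^{\beta^*}\cup([t_j,\infty)\times\{x_j\})$ then deliver the boundary condition $(v^{\xi^\beta}-u^\beta)(t_j,\cdot)=0$ on $I_j$ from the inductive hypothesis, after which Lemma~\ref{lem:new_box} propagates upward. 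You instead restart at the \emph{smallest} activation height $h=\bar t^\beta(x_1)$, replacing $\xi$ by $\xi'\sim(h\wedge\sigma^{\xi^\beta},X_{h\wedge\sigma^{\xi^\beta}})$ and keeping $\beta$ fixed. This is the dual move, but it breaks the inductive bookkeeping: your claim that $\beta$ has at most $n-1$ atoms in the relative support of $(\alpha^{\xi'},\beta)$ requires $U^{\alpha^{\xi'}}(x_1)=U^\beta(x_1)$, i.e.\ $\Eps{\xi}{L^{x_1}_{h\wedge\sigma^{\xi^\beta}}}=-w^\beta(x_1)$, which via Remark~\ref{rem:ubeta-vxi} and $(h,x_1)\in\R^\beta$ is \emph{equivalent} to the boundary condition $(v^{\xi^\beta}-u^\beta)(h,x_1)=0$. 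You say this boundary condition will be supplied by the inductive equality applied to $(\xi',\beta)$---but you cannot invoke that inductive hypothesis until the atom count has been shown to drop, which needs the boundary condition. The argument loops.

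Two further issues. First, the base case: ``$n=0$ atoms in $(\ell^\beta,r^\beta)$'' does \emph{not} force $\beta=\alpha^\xi$; it is precisely the setting of Lemma~\ref{lem:2points} (mass lands at $\{\ell^\beta,r^\beta\}$, barrier $=\re_+\times(\bI\setminus(\ell^\beta,r^\beta))$), which is nontrivial and is exactly where Lemma~\ref{lem:new_box} enters for the first time. Second, even if you established the boundary condition at $t_0=h$ independently (which is doable: for $t\le h$ the backward problem sees no rays in $(\ell^\beta,r^\beta)$, so one can compare directly with the $\beta_0$ of Lemma~\ref{lem:2points}), the restarted optimal stopping problem $u^{\beta,\xi'}$ does \emph{not} coincide with $u^\beta$ on $[h,\infty)\times\bI$: its obstacle is $v^{\xi'}(t-\tau,Y_\tau)+w^{\beta,\xi'}(Y_\tau)\1_{\tau<t}$ with $v^{\xi'}(s,\cdot)=u^\beta(s\wedge h,\cdot)$ and $w^{\beta,\xi'}=U^\beta-u^\beta(h,\cdot)$, which reduces to $U^\beta(Y_\tau)$ for $\tau<t-h$ rather than to $v^\xi(t-\tau,Y_\tau)+w^\beta(Y_\tau)$. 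So a DPP-at-$h$ argument does not identify the two value functions, and consequently neither the barriers $\R^{\beta,\xi'}$ and $\R^\beta$ nor the stopping times need agree for $t>h$. This is precisely the difficulty that the paper's ``modify $\beta$, keep $\xi$'' reduction is designed to sidestep.
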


The proof proceeds by induction on the number of points in the support of $\beta|_{(\ell^\beta,r^\beta)}$. The case where $\ax=\beta$ is trivial, since it follows immediately from \ref{item:t-bar-is-zero} of Corollary~\ref{cor:R-is-barrier} that $\R^\beta = \Ss$. Hence we suppose that $\ell^\beta < r^\beta$. We start with the case where $\left.\beta\right|_{(\ell^\beta,r^\beta)}$ contains no points, and therefore all mass starting in $(\ell^\beta,r^\beta)$ under $\xi$ will be embedded at the two points $\ell^\beta, r^\beta$. 

\begin{lemma}\label{lem:2points}
Let $\sigma^\xi\in\T$ with corresponding time space distribution $\xi$, and $\ax\preceq_{\text{\rm cx}}\beta$ with $\beta((\ell^\beta,r^\beta))=0$. Then $v^{\xb} = u^{\beta}$ holds for all $(t,x) \in \Ss$.
\end{lemma}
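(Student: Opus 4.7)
By Remark~\ref{rem:ubeta-vxi}, the identity $v^{\xb}=u^\beta$ is equivalent to
\[
(v^\xi-u^\beta)(t,x) \;=\; \e^{\xi}\big[L^{x}_{t\wedge\sigma_{\R^\beta}}\big] \quad \text{for all }(t,x)\in\Ss,
\]
which is what I would aim to establish. The first step is to pin down the shape of $\R^\beta$. Since $\beta((\ell^\beta,r^\beta))=0$, the function $w^\beta$ is convex on $(\ell^\beta,r^\beta)$, is non-positive on $\bI$, and vanishes at $\ell^\beta, r^\beta$ and on $(\ell^\beta,r^\beta)^c$. Setting $K:=\{x\in\bI: w^\beta(x)=0\}$, the open complement $K^c$ is contained in $(\ell^\beta,r^\beta)$ and decomposes into at most countably many disjoint open intervals $J_k=(a_k,b_k)$, with $a_k,b_k\in K$ by continuity of $w^\beta$. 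Applying Corollary~\ref{cor:R-is-barrier}(i),(iii) gives $[0,\infty]\times K\subseteq\R^\beta$, while (ii) combined with $\beta(J_k)=0$ and $w^\beta<0$ on $J_k$ yields $\R^\beta\cap([0,\infty]\times J_k)=\emptyset$. Hence $\R^\beta=[0,\infty]\times K$.

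Next, I would verify the identity case-by-case. For $x\in K$, the barrier relation together with $w^\beta(x)=0$ force $u^\beta(t,x)=v^\xi(t,x)+w^\beta(x)=v^\xi(t,x)$, so it suffices to show $\e^\xi[L^x_{t\wedge\sigma_{\R^\beta}}]=0$. Disintegrating along $\xi$: if $X_{T_\xi}\in K$ then $\sigma_{\R^\beta}=T_\xi$ and the local time is zero by the convention $L^x_{t\le T_\xi}=0$; if $X_{T_\xi}\in J_{k'}$ then between $T_\xi$ and $\sigma_{\R^\beta}$ the process $X$ stays strictly inside $J_{k'}$, so it either never reaches $x$ (when $x\notin\partial J_{k'}$) or reaches it only at the single first-hit instant $\sigma_{\R^\beta}$ (when $x\in\partial J_{k'}$); in both cases the continuity of the local time forces $L^x_{t\wedge\sigma_{\R^\beta}}=0$. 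Thus $v^{\xb}(t,x)=v^\xi(t,x)=u^\beta(t,x)$ for $x\in K$.

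For $x\in J_k$, Proposition~\ref{prop:existence}, together with $\R^\beta\cap([0,\infty]\times J_k)=\emptyset$, identifies the optimal stopping rule as $\tau^t=H^Y_{J_k}\wedge t$; and on $\{\tau^t<t\}$ the process $Y_{\tau^t}\in\{a_k,b_k\}\subset K$, so $w^\beta(Y_{\tau^t})=0$. Hence Proposition~\ref{prop:existence} reduces to $u^\beta(t,x)=\e^x[v^\xi(t-\tau^t,Y_{\tau^t})]$. Applying Lemma~\ref{lem:vxi-immediate} to the stopping time $\tau^t$, which is precisely the derivation that yields identity~\eqref{eq:15} in the proof of Lemma~\ref{lem:new_box} specialised to $t_0=0$ and $(a,b)=J_k$, gives
\[
(v^\xi-u^\beta)(t,x) \;=\; \int_{s\in[0,t],\, y\in J_k} \e^{(s,y)}\!\big[L^{x}_{t\wedge H^X_{J_k}}\big]\,\xi(ds,dy).
\]
On the other hand, the disintegration argument of the previous paragraph applied to $\e^\xi[L^x_{t\wedge\sigma_{\R^\beta}}]$ kills contributions from $y\in K$ and from $y\in J_{k'}$ with $k'\neq k$ (since the components are disjoint and $x\notin\overline{J_{k'}}$), while for $y\in J_k$ one uses $\sigma_{\R^\beta}=s+H^X_{J_k}$ under $\p^{(s,y)}$ to recover exactly the integral above. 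Hence $(v^\xi-u^\beta)(t,x)=\e^\xi[L^x_{t\wedge\sigma_{\R^\beta}}]$, yielding $v^{\xb}=u^\beta$. The main delicacy is the local-time bookkeeping at first-hit instants and the justification that \eqref{eq:15} persists at $t_0=0$; both rest on standard continuity properties of the semimartingale local times of the underlying one-dimensional diffusion.
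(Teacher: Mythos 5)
Your proof is correct and follows essentially the same route as the paper's, which pins down the barrier shape via Corollary~\ref{cor:R-is-barrier} and then invokes Lemma~\ref{lem:new_box}. Two small remarks on the comparison. First, your barrier-shape step is more general than needed: since $-w^\beta=U^{\ax}-U^\beta$ is concave on $(\ell^\beta,r^\beta)$ (recall $U^\beta$ is affine there), non-negative, and vanishes at the endpoints, it must be strictly positive on the open interval unless it vanishes identically (excluded as $\beta\neq\ax$); thus $K^c$ is in fact the single interval $(\ell^\beta,r^\beta)$ and the countable decomposition into $J_k$'s is superfluous. This is precisely how the paper argues, and it streamlines the disintegration in the second half since one can work with $J_1=(\ell^\beta,r^\beta)$ alone. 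Second, rather than re-deriving \eqref{eq:15} at $t_0=0$ by hand together with the local-time bookkeeping, the paper simply invokes Lemma~\ref{lem:new_box} with $a=\ell^\beta$, $b=r^\beta$, $t_0\downarrow 0$ (the hypothesis $(v^{\xb}-u^\beta)(0,\cdot)=0$ holds trivially since both sides equal $U^{\mu_0}$ at $t=0$, and $\{0\}\times\{\ell^\beta,r^\beta\}\subset\R^\beta$ by Corollary~\ref{cor:R-is-barrier}(iii)). Your version trades that black-box invocation for an explicit and correct re-derivation; it is longer but makes the mechanism transparent, and correctly identifies that at $t_0=0$ the auxiliary measure $m$ from the proof of Lemma~\ref{lem:new_box} vanishes, so only the $\xi$-integral survives.
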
 

\begin{proof}
  Note first that the convex ordering of $\beta$ and $\ax$ implies that $\ax([\ell^\beta,r^\beta])=\beta([\ell^\beta,r^\beta])$. Moreover, as we ruled out the case $\beta = \ax$ and $U^\beta$ is linear on $(\ell^\beta,r^\beta)$, 
  we have $U^{\ax}(x) > U^\beta(x)$ for all $x \in (\ell^\beta,r^\beta)$. It then follows from Corollary~\ref{cor:R-is-barrier} that $\R^\beta=\re_+\times \left(\bI \setminus (\ell^\beta,r^\beta)\right)$ and $\sib=\inf\{t\geq 0: Y_t\notin (\ell^\beta,r^\beta)\}$ is the first hitting time of $\left(\bI \setminus (\ell^\beta,r^\beta)\right)$. The result now follows from an application of Lemma~\ref{lem:new_box}.
% Since $\mu$ and $\mu^0$ have the same mean it follows that $Y_{\tau^\mu}\sim \mu$ and $(Y_{.\land \tau^\mu})$ is uniformly integrable. Further, since $u^\mu(t,x)>u^0(t,x) + w^{\mu}(x)$ for $x\in (a,b)$ and $u^\mu(t,y)=u^0(t,y)=\pmu{0}(y)$ for $y=a,b$, we have $\tau^t_x=\inf\{s\geq 0: \tilde W^x_s\notin (a,b)\}\land t$.
%  Using \eqref{eq:backwards_rep}, Lemma \ref{lem:box} above, and the It\^o-Tanaka formula, we obtain
% $$
% u^\mu(t,x)
% =
% -\tilde \e^x\big|Y_{\tau^t_x}\big|
% =
% -|x|-\tilde \e^x L^0_{\tau^t_x}
% =
% -|x|-\tilde \e^x L^0_{t\land \tilde H_{a,b}}
% =
% -|x|-\e^0 L^x_{t\land \tau^\mu}
% =
% -\e^0 |Y_{t\land \tau^\mu}-x|,
% $$
% where $\tilde H_{a,b}=\inf\{t\geq 0: \tilde W^x_t\notin (a,b)\}$ and $\tilde L_t^a$ is the local time of $\tilde W$ at level $a$.
\end{proof}

The proof of Proposition \ref{prop:FinSupp} will be complete when we establish that the following induction step  works.

\begin{lemma}\label{lem:induction}
Let $\sx\in\T$ with time-space distribution $\xi$. Assume $v^{\xi^\beta}=u^\beta$ for any $\beta \succeq_{cx} \ax$ which is $\ax$-supported on $n$ points. Then, $v^{\xi^\beta}=u^\beta$ for any measure $\beta$ which is $\ax$-supported on $n+1$ points.
\end{lemma}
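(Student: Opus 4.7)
The natural strategy is to reduce the $(n+1)$--atom case to the $n$--atom case by merging two adjacent atoms of $\beta|_{(\ell^\beta,r^\beta)}$ into a single atom, producing an intermediate measure $\beta'$ with $n$ atoms, and then bridging from $\beta'$ back to $\beta$ by the two-point embedding Lemma~\ref{lem:2points}.

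Concretely, let $x_1<\ldots<x_{n+1}$ enumerate the atoms of $\beta|_{(\ell^\beta,r^\beta)}$ with masses $p_j$, pick an adjacent pair $x_i,x_{i+1}$, and define
\[
\beta' := \beta - p_i\delta_{x_i} - p_{i+1}\delta_{x_{i+1}} + (p_i+p_{i+1})\delta_y,\qquad y:=\frac{p_ix_i+p_{i+1}x_{i+1}}{p_i+p_{i+1}}.
\]
A direct computation gives $U^{\beta'}=U^\beta$ outside $[x_i,x_{i+1}]$ and $U^{\beta'}\ge U^\beta$ inside, whence $\beta'\preceq_{cx}\beta$. Provided $\alpha^\xi\preceq_{cx}\beta'$ (which must be checked and may force a careful choice of the pair to merge), $\beta'$ is $\alpha^\xi$-supported on $n$ points and the induction hypothesis yields $v^{\xi^{\beta'}}=u^{\beta'}$ together with a UI embedding $\sigma^{\xi^{\beta'}}$ of $\beta'$. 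Setting $\xi':=\xi^{\beta'}$, the measure $\beta$ satisfies $\beta\succeq_{cx}\beta'=\alpha^{\xi'}$ with $\ell^\beta_{\beta'}=x_i$, $r^\beta_{\beta'}=x_{i+1}$ and no mass inside $(x_i,x_{i+1})$ by adjacency, so Lemma~\ref{lem:2points} (applied with $\xi$ replaced by $\xi'$) yields the identity $v^{(\xi')^\beta}=u^\beta_{(\xi')}$ (where $u^\beta_{(\xi')}$ is the optimal stopping problem with obstacle $v^{\xi'}+(U^\beta-U^{\beta'})$) and a UI embedding $\sigma_2:=\sigma^{(\xi')^\beta}$ of $\beta$ under $\p^{\mu_0}$.

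The final step is to identify $u^\beta_{(\xi')}$ with $u^\beta$. Using $v^{\xi'}=u^{\beta'}$ and the decomposition $w^\beta=w^{\beta'}+(U^\beta-U^{\beta'})$, the inequality $u^\beta\le u^\beta_{(\xi')}$ is immediate from the obstacle bound $u^{\beta'}(s,z)\ge v^\xi(s,z)+w^{\beta'}(z)$ built into the definition of $u^{\beta'}$. For the reverse inequality, Remark~\ref{rk:D_for_atoms} tells us that the optimal stopping time $\tau^*$ for $u^\beta_{(\xi')}$ stops $Y_{\tau^*}$ at $\{x_i,x_{i+1}\}$ on $\{\tau^*<t\}$, and at these points $u^{\beta'}(t-\tau^*,Y_{\tau^*})=v^\xi(t-\tau^*,Y_{\tau^*})+w^{\beta'}(Y_{\tau^*})$ as soon as $t-\tau^*\ge \overline t^{\beta'}(Y_{\tau^*})$. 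A dynamic-programming substitution that prolongs $\tau^*$ backward along $\{x_i,x_{i+1}\}$ using the martingale property of $u^{\beta'}(t-\cdot,Y_\cdot)$ on $[0,\tau^{t,\beta'}]$ from Proposition~\ref{prop:existence} then converts the outer obstacle into $v^\xi+w^\beta$, delivering $u^\beta_{(\xi')}\le u^\beta$. Combined with Loynes-type uniqueness (Lemma~\ref{lem:unique}), the equality $u^\beta=v^{(\xi')^\beta}$ allows the two-step law $(\xi')^\beta$ to be identified with $\xi^\beta$, so $v^{\xi^\beta}=u^\beta$ as desired.

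The main obstacle is twofold. First, the barycenter merging does not always preserve $\alpha^\xi\preceq_{cx}\beta'$: for $\alpha^\xi=\tfrac12\delta_{-0.9}+\tfrac12\delta_{0.9}$ and $\beta=0.1\delta_{-2}+0.4\delta_{-1}+0.4\delta_1+0.1\delta_2$, the merge $\beta'=0.1\delta_{-2}+0.8\delta_0+0.1\delta_2$ satisfies $U^{\beta'}(0)=-0.4>-0.9=U^{\alpha^\xi}(0)$. This forces either a subtler choice of merging point (e.g.\ pushing mass only partially towards the barycenter so as to stay below $U^{\alpha^\xi}$) or a different reduction (e.g.\ merging along the smallest $(x_i,x_{i+1})$ where the gap $U^{\alpha^\xi}-U^\beta$ dominates the tent height $2p_ip_{i+1}(x_{i+1}-x_i)/(p_i+p_{i+1})$). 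Second, the dynamic programming that converts $u^\beta_{(\xi')}$ into $u^\beta$ requires handling the regime $t-\tau^*<\overline t^{\beta'}(Y_{\tau^*})$, where the identification $u^{\beta'}=v^\xi+w^{\beta'}$ fails pointwise; this is the technical heart and will rely on the concavity, Lipschitz and barrier structure properties from Lemma~\ref{lem:opt-stop-prop} and Corollary~\ref{cor:R-is-barrier}.
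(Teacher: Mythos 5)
Your strategy differs fundamentally from the paper's, and two steps are genuine gaps that would require substantial work to close.

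The paper does not merge atoms: it does the \emph{opposite} reduction. Given the $n+1$ atoms with barrier times $t_1,\dots,t_{n+1}$, it selects $j$ with $t_j$ maximal and defines $\beta^*$ by \emph{removing} the atom at $x_j$ and redistributing its mass proportionally to the neighbours $x_{j-1},x_{j+1}$. This spreads mass outward, so $\beta^*\succeq_{\text{cx}}\beta\succeq_{\text{cx}}\ax$ holds automatically and the convex-ordering obstruction you flag (your explicit counterexample with $\ax=\tfrac12\delta_{-0.9}+\tfrac12\delta_{0.9}$) never arises. This is not a cosmetic choice: merging towards a barycenter necessarily gives $\beta'\preceq_{\text{cx}}\beta$, and you have shown yourself that $\ax\preceq_{\text{cx}}\beta'$ can fail, so the induction hypothesis is not applicable. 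Your suggested repairs (partial merging, choosing the pair with a small tent height) are not fleshed out and are not easy; the paper's choice of spreading makes the problem disappear.

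The second gap is the final identification. You obtain a UI embedding of $\beta$ as the composition $\sigma^{(\xi')^\beta}$ of two barrier hitting times (first hit $\R^{\beta'}$, then hit $(\R')^\beta$), and you want to conclude $v^{\xi^\beta}=u^\beta$ for the single barrier $\R^\beta$ defined directly from $\xi$. Invoking ``Loynes-type uniqueness'' here does not work: Lemma~\ref{lem:unique} is about uniqueness of a $\xi$-regular barrier for a \emph{single} hitting-time embedding of a fixed distribution, whereas a composition of two barrier hitting times is not a priori of the form $\sigma_\R$ for a single barrier $\R$, and even if it embeds the same law there is no reason the stopped process has the same time--space distribution $\xi^\beta$, which is what the identity $v^{\xi^\beta}=u^\beta$ requires. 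The paper sidesteps this entirely: since $t_j=\max_i t_i$, the barriers satisfy $\R^\beta=\R^{\beta^*}\cup([t_j,\infty)\times\{x_j\})$ with $\R^{\beta^*}\subset\R^\beta$, so $X_{t\wedge\sigma_{\R^{\beta^*}}}=X_{t\wedge\sigma_{\R^\beta}}$ for $t\le t_j$ (and for all $t$ off $\Ij$), giving $v^{\xi^\beta}=u^\beta$ there directly from the induction hypothesis; the remaining region $[t_j,\infty)\times\Ij$ is then handled by Lemma~\ref{lem:new_box}, not by a re-embedding argument. Your dynamic-programming conversion of $u^{\beta}_{(\xi')}$ into $u^\beta$ also has the soft spot you identify at the end: on $\{t-\tau^*<\ovl t^{\beta'}(Y_{\tau^*})\}$ the substitution $u^{\beta'}=v^\xi+w^{\beta'}$ is simply false, and it is not clear how to complete that step without essentially redoing Lemma~\ref{lem:new_box}.
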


\begin{proof}
Let $\beta$ be a centred probability measure $\ax$-supported on the $n+1$ ordered points $\mathbf{x}:=\{x_1,\ldots,x_{n+1}\}$, with $\beta[\{x_i\}]>0$ for all $i=1,\ldots, n+1$. By Remark \ref{rk:D_for_atoms}, the set $\R^\beta$ is of the form
 \begin{eqnarray*}
 \R^\beta 
 = 
 \big( [0,\infty] \times \left(\bI \setminus (\ell^\beta,r^\beta)\right)\big) 
 \bigcup_{i=1}^{n+1} \big([t_i,\infty) \times \{x_i\}\big)
 &\mbox{for some}&
 t_1,\ldots,t_{n+1}> 0.
 \end{eqnarray*}
Let $j$ be such that $t_j=\max_it_i$, so that $[t_j,\infty)\times \{x_j\}$ is a horizontal ray in $\R^\beta$ starting farthest away from zero.
Define a centred probability measure $\ax$--supported on $\mathbf{x}^{(-j)}:=\mathbf{x}\setminus\{x_j\}$ by conveniently distributing the mass of $\beta$ at $x_j$ among the closest neighboring points:
$$
\beta^*
=
\beta + \beta[\{x_{j}\}] \Big(-\delta_{\{x_{j}\} }
                                          +\frac{x_{j+1}-x_j}{x_{j+1}-x_{j-1}}\delta_{\{x_{j-1}\}}
                                          +\frac{x_{j}-x_{j-1}}{x_{j+1}-x_{j-1}}\delta_{\{x_{j+1}\}}
                                  \Big).
$$
{\bf 1.} Let $\Ij=(x_{j-1},x_{j+1})$. We first prove that
 \begin{eqnarray}\label{induction_step1}
 u^\beta(t,x)
 &=&
 u^{\beta^*}(t,x),\quad
 (t,x)\in\big([0,\infty]\times \bI\setminus I_j\big)\cup\big([0,t_j]\times I_j\big).
 \end{eqnarray}
By a direct calculation, we see that $U^{\beta^*}(x)=U^\beta(x)$ for $x\notin \Ij$, and $U^{\beta^*}$ is affine and strictly smaller than $U^\beta$ on $\Ij$. Consider first $x\notin \Ij$.
Recall \eqref{eq:backwards_rep} with the optimal stopping time $\tau^t$ being the minimum of $t$ and the first entry to $\R^\beta$ for the diffusion $X$ started in $(t,x)$ and running backward in time. However since $\max\{t_{j-1},t_{j+1}\}\leq t_j$ it follows that $Y_{\tau^t}\neq x_j$ on $\tau^t<t$. In consequence, we can rewrite \eqref{eq:u_optstop2} as
$$
u^\beta(t,x)
\;=\;
\sup_{\tau\in \T(\mathbf{x})}
J_{t,x}^\beta(\tau)
\;=\;
\sup_{\tau\in \T(\mathbf{x}^{(-j)})}
J_{t,x}^\beta(\tau)
\;=\;
\sup_{\tau\in \T(\mathbf{x}^{(-j)})}
J_{t,x}^{\beta^*}(\tau)
\;=\;
u^{\beta^*}(t,x)
~\mbox{for}~t\ge 0,~x\notin \Ij.
$$
An analogous argument shows $u^\beta(t,x)=u^{\beta^*}(t,x)$ for $x\in \Ij\setminus \{x_j\}$ and $t\leq t_j$ and for $x=x_j$ and $t<t_j$. By continuity of $u^\beta$ we also have $u^\beta(x_j,t_j)=u^{\beta^*}(x_j,t_j)$.
\\
{\bf 2.} We now prove that $u^\beta=v^{\xi^\beta}$ holds for all $(t,x)$. \\
2.1. From the fact that $u^\beta(t,x)=u^{\beta^*}(t,x)$, for $x\notin \Ij$, together with $\beta^*(\Ij)=0$, it follows that $\R^{\beta}=\R^{\beta^*}\cup \big([t_j,\infty)\times \{x_j\}\big)$. Consequently, for all $t\leq t_j$ and all $s\geq 0$,
\begin{eqnarray*}
X_{t\land \sigma_{\R^{\beta^*}}}
=
X_{t\land \sigma_{\R^{\beta}}}
&\mbox{and}&
X_{s\land \sigma_{\R^{\beta^*}}}\1_{\bI\setminus\Ij}(X_{s\land \sigma_{\R^{\beta^*}}})=X_{s\land \sigma_{\R^{\beta}}}\1_{\bI \setminus \Ij}(X_{s\land \sigma_{\R^{\beta}}}),
~~\mbox{a.s.}
\end{eqnarray*}
 It follows from the induction hypothesis that $u^\beta=v^{\xi^\beta}$ holds for all $x\in\re,\ t\leq t_j$, and for all $x \not\in \Ij$. 
\\
2.2. 
It remains to consider $x\in (x_{j-1},x_{j+1})$ and $t>t_j$. For $x\in (x_{j},x_{j+1})$, we now know that $u^\beta=v^{\xi^\beta}$ holds at $t=t_j$, and $\R^{\beta}$ places no points in $[0,\infty)\times (x_j,x_{j+1})$. Then, it follows from Lemma~\ref{lem:new_box} that $u^\beta=v^{\xi^\beta}$ on $(x_{j},x_{j+1})$. The same argument applies for $x\in (x_{j-1},x_{j})$.

\end{proof}

\subsection{The case of locally finitely supported measures}

In this subsection, we consider the case of measures $\beta$ which are $\ax$--finitely supported on any compact subset of $\R$, but could have an accumulation of atoms at $-\infty$ or $\infty$. We will establish Theorem \ref{thm:main_general_start} for such $\beta$ by suitably approximating $\beta$ with a sequence of measures with $\alpha^\xi-$finite support. Recall that $\ell^\beta = \sup\{x:\ax((-\infty,y]) = \beta((-\infty,y]) \ \forall y \le x\} = \sup\{x: U^{\ax}(y) = U^\beta(y)\ \forall y \le x\}$, and similarly for $r^\beta$. The desired result has already been shown when $-\infty < \ell^\beta \le r^\beta < \infty$, see Proposition \ref{prop:FinSupp}, so we consider the case where at least one of these is infinite. For simplicity, we suppose that both are infinite (and hence $\I = \re$), the case where only one is being similar. The approximation is depicted graphically in Figure~\ref{fig:localfiniteapprox}.
\begin{figure}[th]
  \centering
  \includegraphics[width=\textwidth]{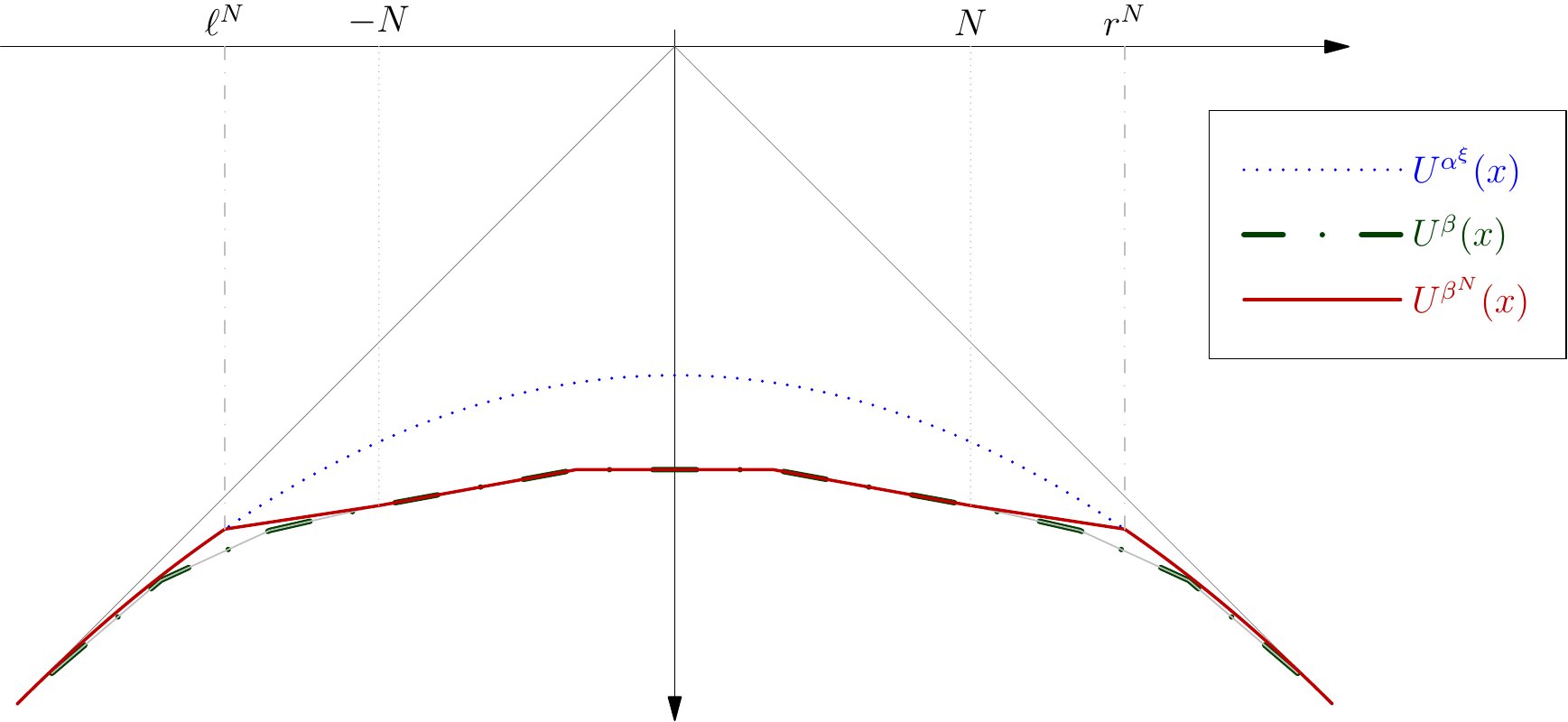}
  \caption{A graphical representation of the construction of the measure $\beta^N$ in terms of the potential functions of the measures $\alpha^\xi$ and $\beta$. }
  \label{fig:localfiniteapprox}
\end{figure}

For $N>0$, we observe that we can define a new measure $\beta^N$, and constants $\ell^N < N, r^N > N$ such that $\beta^N([-N,N]\cap A) = \beta([-N,N])\cap A)$ for $A \in \mathcal{B}(\re)$, $\beta^N([\ell^N,r^N]^\complement \cap A) = \ax([\ell^N,r^N]^\complement \cap A)$, and $\beta^N((\ell^N,-N)\cup (N,r^N)) = 0$. In particular, to construct such a measure, we can set $U^{\beta^N}(x) = U^\beta(x)$ for $x \in [-N,N]$, and extend linearly to the right of $N$, with gradient $(U^\beta)'_+(N)$ until the function meets $U^{\ax}$, at the point $r^N$, from which point on, we take $U^{\beta^N}(x) = U^{\ax}(x)$; a similar construction follows from $-N$. The existence of the point $r^N$ follows from the fact that $U^\beta(x)-U^{\alpha^\xi}(x) \to 0$ as $x \to \infty$, which in turn is a consequence of the convex ordering property. This construction guarantees
 \begin{align*}
   & U^{\beta^N}(x) \ge U^\beta(x)
   \mbox{ for all}~~x \in \re, \\ & U^{\beta^N} \mbox{ converges uniformly to } U^\beta \mbox{ and } \\
   & U^{\beta^N}(x) = U^{\ax}(x)
   ~~\mbox{for}~~x \not\in (\ell^N,r^N).
 \end{align*}
In particular, $\beta^N$ is a sequence of atomic measures with $\alpha^\xi-$finite support. Hence, by Proposition \ref{prop:FinSupp}, Theorem~\ref{thm:main_general_start} holds for these measures. Moreover, we can prove the following:

\begin{lemma}\label{prop:2}
Let $\sx\in\T$ with corresponding time-space distribution $\xi$, and $\beta$ a locally finitely supported measure such that $\ax \preceq_{\text{\rm cx}} \beta$. Let $\beta^N$ be the sequence of measures constructed above. Then the sequence $\big(\R^{\beta^N}\cap ([0,\infty)\times [-N,N])\big)_{N\ge 1}$ is non-decreasing, and 
  \begin{equation*}
    \R^\beta = \R := \overline{\bigcup_{N \ge 1}\left( \R^{\beta^N} \cap \left([0,\infty)\times [-N,N]\right) \right)}.
  \end{equation*}
\end{lemma}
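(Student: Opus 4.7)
The claim has three components: monotonicity of the restricted barriers in $N$, the containment $\R \subseteq \R^\beta$, and the reverse containment $\R^\beta \subseteq \R$. I would handle the first two together via a comparison argument; the third is the main obstacle.

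For the first two components, I first verify pointwise $U^\beta \le U^{\beta^{N+1}} \le U^{\beta^N}$ on $\re$. Concavity of $U^\beta$ pins down the ordering of the two tangent-line extensions in their common range $(N,\infty)$, and the crossing points satisfy $r^N \le r^{N+1}$ (the flatter tangent at $N+1$ meets $U^{\ax}$ later), which handles the remaining linear-vs-$U^{\ax}$ pieces. This gives $\beta^N \preceq_{\text{\rm cx}} \beta^{N+1} \preceq_{\text{\rm cx}} \beta$, hence $w^{\beta^N} \ge w^{\beta^{N+1}} \ge w^\beta$ with equality on $[-N,N]$, and by monotonicity of the obstacle in \eqref{eq:opt_stop_new} also $u^{\beta^N} \ge u^{\beta^{N+1}} \ge u^\beta$. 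For $(t,x)\in\R^{\beta^N}$ with $|x|\le N$ the sandwich
\[
v^\xi(t,x) + w^\beta(x) \;\le\; u^{\beta^{N+1}}(t,x) \;\le\; u^{\beta^N}(t,x) \;=\; v^\xi(t,x) + w^{\beta^N}(x) \;=\; v^\xi(t,x) + w^\beta(x)
\]
forces equality throughout, giving $(t,x) \in \R^{\beta^{N+1}}$, and with $u^\beta$ in place of $u^{\beta^{N+1}}$ also $(t,x) \in \R^\beta$. Since $\R^\beta$ is closed (Corollary~\ref{cor:R-is-barrier}), the closure of the monotone union is contained in $\R^\beta$.

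For $\R^\beta \subseteq \R$, I would first establish the uniform estimate $\|u^{\beta^N} - u^\beta\|_\infty \le \|w^{\beta^N} - w^\beta\|_\infty$ by comparing the suprema in \eqref{eq:opt_stop_new} pointwise in $\tau$. The construction of $\beta^N$ makes $\|U^{\beta^N} - U^\beta\|_\infty$ bounded by $\sup_{|x|>N}(U^{\ax}(x) - U^\beta(x))$, which tends to zero by the standard asymptotics of potentials of integrable measures in convex order. Hence $u^{\beta^N} \to u^\beta$ uniformly and $\beta^N \to \beta$ weakly. The closure $\R^\infty:=\R$ inherits the barrier property from the monotone union. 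My plan is then to show that $\R^\infty$ is a $\xi$-regular barrier whose hitting time under $\p^\xi$ embeds $\beta$, and to conclude $\R^\infty = \R^\beta$ by uniqueness of the $\xi$-regular barrier with prescribed embedding distribution (the Loynes-type argument behind Lemma~\ref{lem:unique}). The embedding property would be extracted by passing to the limit in the identity $X_{\sigma_{\R^{\beta^N}}} \sim \beta^N$, which holds by Proposition~\ref{prop:FinSupp}.

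The hardest step is the convergence $\sigma_{\R^{\beta^N}} \to \sigma_{\R^\infty}$ under $\p^\xi$. The barriers $\R^{\beta^N}$ grow monotonically inside $[-N,N]$, but outside they include auxiliary pieces supported near the moving endpoints $\{\ell^N, r^N\}$ which are not globally monotone in $N$. I expect the argument to proceed by showing that, under $\p^\xi$, the probability that the process is stopped by these auxiliary pieces rather than by the growing interior portion of the barrier vanishes as $N \to \infty$, and to combine this with a uniform-integrability control in order to transfer the identity in law for $X_{\sigma_{\R^{\beta^N}}}$ into the embedding $X_{\sigma_{\R^\infty}} \sim \beta$ needed to close the uniqueness argument.
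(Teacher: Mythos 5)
Your first part (monotonicity of the restricted barriers and $\R\subseteq\R^\beta$) matches the paper's Step 1 almost exactly: you verify the ordering of potentials, deduce the ordering $u^{\beta^N}\ge u^{\beta^{N+1}}\ge u^\beta$, and use the sandwich $v^\xi+w^\beta\le u^\beta\le u^{\beta^N}=v^\xi+w^{\beta^N}=v^\xi+w^\beta$ on $[-N,N]$. The uniform convergence $u^{\beta^N}\to u^\beta$ via $\|w^{\beta^N}-w^\beta\|_\infty\to 0$ is also correct and is implicitly used by the paper.

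The gap is in the reverse inclusion $\R^\beta\subseteq\R$. Your reduction to ``show $\R$ is $\xi$-regular and embeds $\beta$, then conclude by Loynes-type uniqueness'' does not close, for two reasons. First, the uniqueness mechanism in Lemma~\ref{lem:unique} matches barriers with the \emph{same} stopping distribution, so to invoke it you would also need to know that $\sigma_{\R^\beta}$ embeds $\beta$ and that $\R^\beta$ is $\xi$-regular; but these are precisely the conclusions of Theorem~\ref{thm:main_general_start} for $\beta$, which the present lemma is a step toward proving — you would be assuming the thing you set out to establish. Second, knowing $\R\subseteq\R^\beta$, so that $\sigma_\R\ge\sigma_{\R^\beta}$, together with $X_{\sigma_\R}\sim\beta$, does not on its own force equality of the barriers. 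And of course you explicitly flag the convergence $\sigma_{\R^{\beta^N}}\to\sigma_\R$ under $\p^\xi$ as ``the hardest step'' without actually providing an argument.

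The paper avoids this circularity by working at the level of the value functions rather than the stopping distributions. For a point with $0<t'=\ovl t_\R(x)<\infty$ it shows $u^{\beta^N}(t,x)-v^\xi(t,x)\ge w^{\beta}(x)+\delta$ for all $t<t'$ with $t'-t$ small, where $\delta>0$ is \emph{independent of $N$}. The crucial input is the expression for $\Lc v^\xi$ in Lemma~\ref{lem:vxi-immediate}, which makes the processes $V^{t'}$ and $V^{t}-V^{t'}$ \emph{strict} supermartingales where they pick up mass of $\xi$, and the two cases analysed there isolate exactly where this strictness holds on the event that the backwards process reaches $\xi$'s support before hitting $\R$. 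Passing to the limit in $N$ then gives $u^\beta(t,x)-v^\xi(t,x)>w^\beta(x)$, hence $(t,x)\notin\R^\beta$, with no need to know the law of $X_{\sigma_{\R^\beta}}$ or to invoke uniqueness. That uniform-in-$N$ strict gap is the idea missing from your proposal, and without it the reverse inclusion does not follow.
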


\begin{proof}
  We proceed in two steps:\\
  {\bf 1.} We first show that $\big(\R^{\beta^N}\cap ([0,\infty)\times [-N,N])\big)_{N\ge 1}$ is non-decreasing and $\R^{\beta} \supseteq \R$. Recall that $U^\beta(x)\le U^{\beta^{N'}}(x) \le U^{\beta^N}(x)$ for $N'\ge N$. Then, by definition of the optimal stopping problem, we see that $u^\beta(t,x)\le u^{\beta^{N'}}(t,x) \le u^{\beta^N}(t,x)$. However, we have $U^\beta(x)=U^{\beta^{N'}}(x) = U^{\beta^N}(x)$ for $x \in [-N,N]$ by construction, and so if it is optimal to stop for $\beta^N$, it is also optimal to stop for $\beta^{N'}$ and for $\beta$. It follows that, for $x\in [-N,N]$, $(t,x)\in \R^{\beta^N}$ implies $(t,x)\in \R^{\beta^N}$ and $(t,x)\in \R^{\beta}$. The desired monotonicity follows instantly and  $\R^{\beta} \supseteq \R$ follows since $\R^\beta$ is closed.
  \\
  {\bf 2.} It remains to show the reverse inclusion $\R \supseteq \R^\beta$. \\
First, observe that for the points where $\ovl{t}_{\R}(x) = 0$ or $\ovl{t}_{\R}(x) = \infty$ the inclusion
  holds. This is an immediate consequence of Corollary~\ref{cor:R-is-barrier} together with the relation between the measures $\beta$ and $\beta^N$.\\
  The rest of the proof is devoted to showing that for a point $x$ in the support of $\beta$ with $0 < t':=\ovl t_{\R}(x) < \infty$, we have $(t,x)\notin\R^\beta$ for all $t<t'$. We first carry our preparatory computations which follow two cases. Then we combine the two to give the final result.\\
  {\bf 2.1}.\ Since Theorem~\ref{thm:main_general_start} holds for $\beta^N$, we have
  $u^{\beta^N}=v^{\xi^{\beta^N}}$. It then follows from Remark
  \ref{rem:regularity} that $\Eps{\xi}{L_{\sigma^{\beta^N}}^x} = \big(U^{\alpha^\xi}-
  U^{\beta^N}\big)(x)=:\varepsilon_0$ and $\varepsilon_0>0$ for our $x$. 
  Denote $H_{-N_0,N_0} = \inf\{t \ge \Tx: |X_t| \ge N_0\}$. Then, for
  sufficiently large $N_0$, we have $\Eps{\xi}{L_{\sigma_{\R^{\beta^N}} \wedge H_{-N_0,
        N_0}}^x} > \varepsilon_0/2$ for all $N \ge N_0$. Note that $\lim_{N\to \infty} \R^{\beta^N}\cap ([0,\infty)\times [-N_0,N_0]) = \R \cap ([0,\infty)\times [-N_0,N_0])$. 
        Letting $N \to\infty$, we conclude that
  \begin{eqnarray*}
    \Eps{\xi}{L_{\sigma_{\R} \wedge H_{-N_0, N_0}}^x} 
    &\ge& 
    \varepsilon_0/2.
  \end{eqnarray*}
  This means that, for all $t < t'$ with $t'-t$ sufficiently small there is a
  positive probability under $\p^\xi$ that the process reaches $(t,x)$ before
  hitting $\R$ (and hence also $\R^{\beta^N}$) or exiting $[-N_0,N_0]$. In
  particular, considering possible paths, we can reverse this: for any such
  $t<t'$, running backwards, there exists a positive probability that we will
  reach the support of $\xi$ before hitting $\R$ or exiting a bounded
  interval. More specifically, writing $x_- = \sup\{y < x: (0,y) \in \R\}, x_+ =
  \inf\{y>x : (0,y) \in \R\}$, and $\eps = t'-t$, for some $\eps$ sufficiently
  small at least one of the following two cases described below is true. We refer to Figure~\ref{fig:backward_cases} for a graphical interpretation of the two cases, and a
  number of the important quantities described below.
  \begin{figure}[th]
    \centering
    \includegraphics[width=\textwidth]{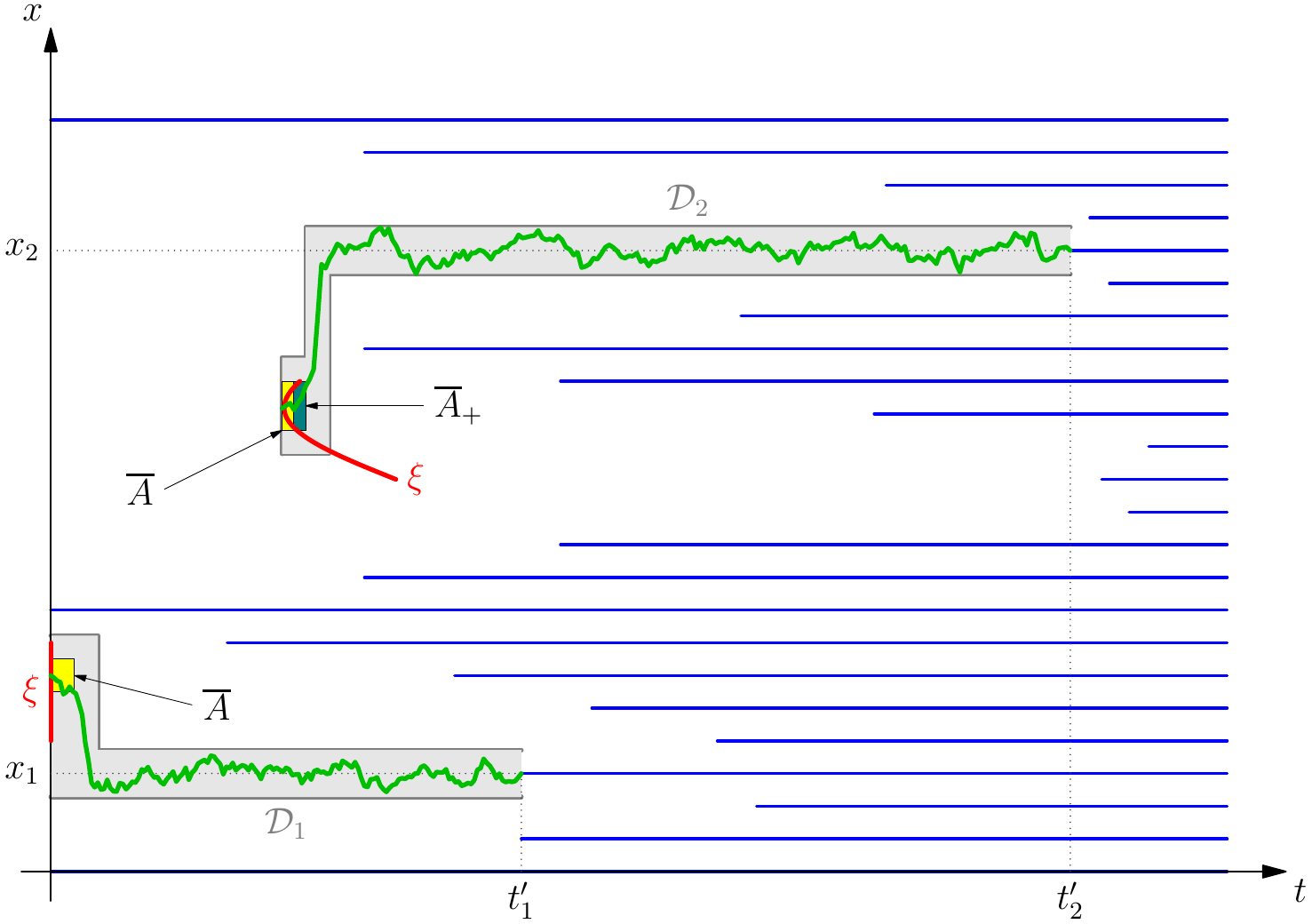}
    \caption{The possible cases considered in step 3.1.\ of the proof of
      Lemma~\ref{prop:2}. In the first case, shown in the bottom half of the
      diagram, paths starting at $(t_1',x_1)$ can only reach points in the
      support of $\xi$ (denoted by the red line) which are at time $0$. In this
      case, we are interested on the behaviour of the process on the set
      $\overline{A}$ shown, given that it does not leave the set $\D_1$. In the
      second case, the process starting at $(t_2',x_2)$ can reach points in the
      support of $\xi$ which are not in the set $\{t=0\}$. In this case, we are
      interested in the behaviour of the process on the sets $\overline{A}$ and
      $\overline{A}_+$ depicted, given that the process does not leave $\D_2$.}
    \label{fig:backward_cases}
  \end{figure}

  \begin{itemize}
  \item[Case 1] The only points of the support of $\xi$ which can be reached from
    $(t',x)$ without exiting $\R$ are in $\{0\} \times (x_-,x_+)$. Let $A
    \subseteq (x_-,x_+)$ be a closed and bounded interval such that
    $\xi(\{0\} \times A) >0$. Observe that the measures $\beta^N$ are
    $\ax$-finitely supported, and hence $\R^{\beta^N} \cap \left(\re_+ \times
      ([x-\eps,x+\eps]\setminus \{x\}) \right) = \emptyset$ for some $\eps>0$,
    and all $N$. Moreover, we may assume that $\eps$ is also sufficiently small
    that $[0,2\eps] \times [\inf A \wedge x-\eps, \sup A \vee x+\eps] \cap
    \R = \emptyset$.
    
    For such an $\eps$, write \[\D := \left( [0,2\eps] \times [\inf A
      \wedge x-\eps, \sup A \vee x+\eps] \cup [0,t') \times [x-\eps,x+\eps]\right)\] and
    note that $\R \cap \D = \emptyset$.

    Our aim is now to use the expression of $\Lc v^\xi$ in Lemma
    \ref{lem:vxi-immediate}, to show that $V^{t'}$ is a strict supermartingale
    on $\overline{A} := [0,\eps]\times A$. Recall that $t=t'-\eps$ and define
    \begin{align*}
      \tau_{N} & = \inf \{ s > 0: (t'-s,Y_s) \in \R^{\beta^N}\} \wedge t, & \tau & = \inf \{ s > 0: (t'-s,Y_s) \in \R\} \wedge t\\
      \tau_{N}^\eps & = \inf \{ s > 0: (t'-s,Y_s) \in \R^{\beta^N}\} \wedge
      t', & \tau^\eps & = \inf \{ s > 0: (t'-s,Y_s) \in \R\} \wedge t'
    \end{align*}
    and
    \begin{equation*}
      \tau^{\D} =  \inf \{ s > 0: (t'-s,Y_s) \not\in \D\}\land t'.
    \end{equation*}
    Recall the family of supermartingales $V^t$ defined in \eqref{Vt}.  We want
    to show that $\Eps{x}{V_{\tau_N}^{t'} - V_{\tau_N^{\eps}}^{t'}} \ge
    \delta >0$ for some constant $\delta$ which is independent of $N$. Since
    $\tau^{\D} \wedge t \le \tau_N\le \tau_N^\epsilon$ for all $N$, the event
    $\{\tau^{\D} >t\}$ is $\F_{\tau_N}$-measurable. Hence it is
    sufficient to show that $\Eps{x}{\left(V_{\tau_N}^{t'} -
      V_{\tau_N^{\eps}}^{t'}\right) \1_{\{\tau^{\D} >t\}}} \ge\delta$. Using
    the supermartingale property of $V^{t'}$, we can further reduce this to
    showing that \[\Eps{x}{\left(V_{\tau_N}^{t'} - V_{\tau_N^{\eps}\wedge
        \tau^{\D}}^{t'}\right) \1_{\{\tau^{\D} >t\}}} \ge \delta.\] 
        Note that on $\{\tau_D>t\}$ we have $\tau_N=t$ and $\tau_N^\epsilon\geq \tau_D$.
        We now write $q(t'-s,y)$ for the space-time density of the process
    $(t'-s,x+Y_s)$ killed when it leaves $\D$, i.e.
    \begin{equation*}
      \Eps{x}{f(Y_s); s<\tau_D} = \int q(t'-s,y) f(y) \, dy
    \end{equation*}
    for smooth functions $f$. Then from the form of $\D$, we know that $q$ is
    bounded away from zero on $\overline{A}$, and applying
    Lemma~\ref{lem:vxi-immediate} we have 
%    \comment{Perhaps we need to be more  careful here, and use some smoothing/approximation argument?}
    \begin{align*}
      \Eps{x}{\left(V_{\tau_N}^{t'} - V_{\tau_N^{\eps}\wedge
          \tau^{\D}}^{t'} \right) \1_{\{\tau^{\D} >t\}}} & \ge
      -\int_{(t'-s,y) \in \overline{A}} q(t'-s,y) \mathcal{L}v^\xi(t'-s,dy) ds\\
      & \ge \int_{(t'-s,y) \in \overline{A}} \eta(y)^2 q(t'-s,y) \xi(0,dy) ds,
    \end{align*}
    by the assumption on the support of $\xi$ under consideration.
    By the assumption on $\xi$, and the fact that $q$ is bounded below on
    $\overline{A}$, this final term is strictly positive, and independent of
    $N$, so:
    \begin{equation}
      \label{eq:v-xi-eqn1}
      \Eps{x}{V_{\tau_N}^{t'} - V_{\tau_N^{\eps}}^{t'}} \ge
      \delta
    \end{equation}
    for some $\delta>0$ independent of $N$.

    % and using the fact that $V^{t+\eps}$ is a supermartingale and $\tau_N^\eps \ge
    % \tau_N$, with also $\tau_N^\eps = \tau^\eps$ and $\tau_N = \tau$ on
    % $\{\tau^{\D} = t+\eps\}$, we conclude that
    % \begin{equation*}
    %   \Eps{x}{V_{\tau_N}^{t+\eps} - V_{\tau_N^{\eps}}^{t+\eps}} \ge
    %   \Eps{x}{V_{\tau}^{t+\eps} - V_{\tau}^{t+\eps}} \ge
    %   \Eps{x}{\left(V_{\tau}^{t+\eps} -
    %     V_{\tau}^{t+\eps}\right)\1_{\{\tau^{\D} = t+\eps\}}}
    % \end{equation*}
    % \begin{equation*}
    %   \Eps{x}{v^{\xi}(t+\eps - \tau_N,Y_{\tau_N})-v^{\xi}(t+\eps - \tau_N
    %   ^\eps,Y_{\tau_N ^\eps})} > \delta
    % \end{equation*}
    % for all $N \ge N_0$, where $\delta>0$ does not depend on $N$.

  \item[Case 2] There exists a bounded rectangle $\overline{A} \subset (0,t') \times
    (x_-,x_+)$ such that $\xi(\overline{A}) >0$, all points of $\overline{A}$
    can be reached from $(t',x)$ via a continuous path which does not enter
    $\R$, and the process spends a strictly positive time in
    $\overline{A}$. More specifically, for all sufficiently small $\eps>0$, we
    can choose $a_\ell, a_r$, $s_A$ such that $\overline{A} =
    [s_A,s_A+\eps/2)\times[a_\ell, a_r]$, $\xi(\overline{A}) >0$, $s_A+3\eps <
    t'$ and the set
    \begin{align*}
      \D := & \left( [s_A, s_A+\eps]\times [a_\ell-\eps, a_r+\eps]\right) \cup
      \left( [s_A+\eps,s_A+2\eps] \times [a_\ell \wedge x-\eps,a_r \vee x +
        \eps]\right) \\ & \quad \quad {}\cup \left( [s_A+2\eps,t'] \times [ x-\eps,x+\eps] \right)
    \end{align*}
    satisfies $\D \cap \R = \emptyset$. Further, recalling the definitions of
    $\tau^\D$ and $\tau_N$ above, we have $\tau^\D \le \tau_N$ $\p^x$-a.s.. In a
    similar manner to above, we now write $\tilde{q}(t'-s,y)$ for the
    space-time density of the process $(t'-s,x+Y_s)$ killed when it leaves
    $\D$, and observe that $\tilde{q}$ is bounded away from zero on the set
    $\overline{A}_+:= [s_A+\eps/2, s_A+\eps]\times[a_\ell, a_r]$. It follows
    from Lemmas~\ref{lem:vxi-immediate} and \ref{lem:vxi supermart} that:
    \begin{align*}
      & \Eps{x}{\int_0^{\tau_N} \left(\Lc v^{\xi}(t-s,Y_s) - \Lc
          v^{\xi}(t'-s,Y_s)\right) \, ds}  \\
      & \qquad \qquad \qquad \ge 
      \Eps{x}{\int_0^{\tau^D} \left(\Lc v^{\xi}(t-s,Y_s) - \Lc
          v^{\xi}(t'-s,Y_s)\right) \, ds}\\
      & \qquad \qquad \qquad \ge \int_{(t'-s,y) \in\D} \tilde{q}(t'-s,y) \left(\Lc v^{\xi}(t-s,y) - \Lc
        v^{\xi}(t'-s,y)\right) ds \, dy \\
      & \qquad \qquad \qquad \ge \int_{(t'-s,y) \in\overline{A}_+} \eta(y)^2\tilde{q}(t'-s,y)
      \xi([s_A,s_A+\eps/2),dy) ds
      % & \ge  \Eps{x}{\int_0^{\tau^D} \1\{(t+\eps-s,\ti W^{x}_s) \in \overline{A}^+}\xi([s_A,s_A+\eps/2],dy)
    \end{align*}
    where in the last line we applied Lemma~\ref{lem:vxi-immediate} and the fact
    that for $(t'-s,y) \in \overline{A}_+$
    \begin{equation*}
      \left(\Lc v^{\xi}(t-s,y) - \Lc  v^{\xi}(t'-s,y)\right) dy  =
      \eta(y)^2\xi([t-s,t'-s),dy) \ge \eta(y)^2\xi([s_A,s_A+\eps/2),dy).
    \end{equation*}

    It follows that we can choose $\delta>0$ independent of $N$ such that 
    \begin{equation*}
      \Eps{x}{\int_0^{\tau_N} \left(\Lc v^{\xi}(t-s,Y_s) - \Lc
          v^{\xi}(t'-s,Y_s)\right) \, ds} \ge \delta,
    \end{equation*}
    which, by an application of It\^o's formula, implies that
    \begin{equation}
      \label{eq:v-xi-eqn2}
      \Eps{x}{V^t_{\tau_N} - V^{t'}_{\tau_N}} \ge v^\xi(t,x) - v^\xi(t',x) + \delta. 
    \end{equation}
  \end{itemize}
  Observe finally that, in view of the supermartingale properties of Lemma \ref{lem:vxi supermart}, we can combine \eqref{eq:v-xi-eqn1} and
  \eqref{eq:v-xi-eqn2} to get:
  \begin{equation}
    \label{eq:v-xi-eqn3}
    \Eps{x}{V^t_{\tau_N} - V^{t'}_{\tau_N}} + \Eps{x}{V_{\tau_N}^{t'} - V_{\tau_N^{\eps}}^{t'}} \ge v^\xi(t,x) - v^\xi(t',x) + \delta
  \end{equation}
  for some $\delta>0$ independent of $N$, and for \emph{any} $\xi$ satisfying the
  conditions of the lemma.\\
  {\bf 2.2.}\  We are now ready to exploit the above to establish that $(t,x)\notin \R^\beta$ for $t<t'$. Take the values of $t,\eps,\delta$ determined above, and consider the
  following calculation:
  \begin{eqnarray*}
    u^{\beta^N}(t,x) - v^\xi(t,x) 
    &\ge& 
    \Eps{x}{V^t_{\tau_N}
      + w^{\beta^N}(Y_{\tau_N}) \1_{\{\tau_N<t\}} } - v^\xi(t,x)
    \\
    &\ge& 
    \Eps{x}{V^t_{\tau_N}- V^{t'}_{\tau_N}}  + \Eps{x}{V^{t'}_{\tau_N}-V^{t'}_{\tau_N^\eps}} \\
    && \quad \quad \quad {} 
       + \Eps{x}{w^{\beta^N}(Y_{\tau_N})\1_{\{\tau_N<t\}} 
       - w^{\beta^N}(Y_{\tau_N^\eps}) \1_{\{\tau_N^\eps<t'\}}}
    \\
    && \quad \quad \quad {} 
       + \Eps{x}{V^{t'}_{\tau_N^\eps} 
       + w^{\beta^N}(Y_{\tau^\eps_N})\1_{\{\tau_N^\eps<t'\}}} 
       - v^\xi(t,x)
    \\
    &\ge& 
    \left(v^\xi(t,x) - v^\xi(t',x)\right) + \delta + u^{\beta^N}(t',x) - v^\xi(t,x).
  \end{eqnarray*}
  % \begin{align*}
  %   u^{\beta^N}(t,x) - v^\xi(t,x) & \ge \Eps{x}{v^{\xi}(t-\tau_N,Y_{\tau_N})
  %   + w^{\beta^N}(Y_{\tau_N}) \1\{\tau_N<t\} } - v^\xi(t,x)\\
  %   & \ge \Eps{x}{v^{\xi}(t-\tau_N,Y_{\tau_N})-v^{\xi}(t+\eps-\tau_N,
  %   Y_{\tau_N})} \\
  %   & \quad\quad\quad {} + \Eps{x}{v^{\xi}(t+\eps-\tau_N,Y_{\tau_N}) -
  %   v^{\xi}(t+\eps-\tau_N^\eps,Y_{\tau_N^\eps})} \\
  %   & \quad \quad \quad {} + \Eps{x}{w^{\beta^N}(Y_{\tau_N})
  %   \1\{\tau_N<t\} - w^{\beta^N}(Y_{\tau_N^\eps}) \1\{\tau_N^\eps<t+\eps\}}\\
  %   & \quad \quad \quad {} + \Eps{x}{v^{\xi}(t+\eps-\tau_N^\eps,
  %   Y_{\tau_N^\eps}) + w^{\beta^N}(
  %   Y_{\tau^\eps_N})\1\{\tau_N^\eps<t+\eps\}} -v^\xi(t,x)\\
  %   & \ge \left(v^\xi(t,x) - v^\xi(t+\eps,x)\right) + \delta + u^{\beta^N}(t+\eps,x)
  %   - v^\xi(t,x).
  % \end{align*}
  Here we use \eqref{eq:v-xi-eqn3} for the first two terms in the second inequality;
  the third term in the second inequality is at least 0 using the fact that $\tau_N <
  t$ implies that $\tau_N^\eps=\tau_N < t$, and $w^{\beta_N}(\cdot) \le 0$.  It then
  follows, since $v^\xi$ is non-increasing in $t$, that
  \begin{equation*}
    u^{\beta^N}(t,x) - v^\xi(t,x) \ge u^{\beta^N}(t',x) - v^\xi(t',x) + \delta \ge  w^{\beta^N}(x) + \delta \ge w^{\beta}(x) + \delta.
  \end{equation*}
  We now use the fact that $\delta>0$ independently of $N$, and $u^{\beta^N}(t,x)
  \to u^{\beta}(t,x)$ as $N \to \infty$ to deduce that $u^{\beta}(t,x) -
  v^\xi(t,x) > w^{\beta}(x)$. In particular, it is not optimal to stop immediately
  for the $u^\beta$ optimal stopping problem at
  $(t,x)$ with $t<t'$, whenever $0<\ovl{t}_{\R}(x) < \infty$.\\
\end{proof}

\begin{prop} \label{prop:LocFinSupp}
Let $\sx\in\T$ with corresponding time-space distribution $\xi$, and $\beta$ a locally finitely supported measure such that $\ax \preceq_{\text{\rm cx}} \beta$. Then $u^\beta=v^{\xi^\beta}$ and Theorem \ref{thm:main_general_start} holds for $\beta$.
\end{prop}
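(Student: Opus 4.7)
\textbf{Strategy.} The plan is to approximate $\beta$ by the $\ax$-finitely supported measures $\beta^N$ constructed before Lemma~\ref{prop:2}, exploit the identity $u^{\beta^N} = v^{\xi^{\beta^N}}$ from Proposition~\ref{prop:FinSupp}, and pass to the limit $N\to\infty$ to deduce $u^\beta = v^{\xi^\beta}$. Once this identity is in hand, Lemma~\ref{lem:opt-stop-prop}\ref{item:v-xi-u-beta-ineq} combined with Lemma~\ref{lem:u-beta-second-prop} immediately yields that $\sxb$ is UI and embeds $\beta$, while Remark~\ref{rem:regularity} provides the $\xi^\beta$-regularity of $\R^\beta$. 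So the only task is to verify the identity.

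\textbf{Two convergences.} I will first show that $u^{\beta^N} \to u^\beta$ uniformly. By construction, $U^{\beta^N} \searrow U^\beta$ uniformly on $\bI$, hence $w^{\beta^N} \to w^\beta$ uniformly, and plugging into \eqref{eq:opt_stop_new} (the supremum is over the common family $\T^t$) gives
\begin{equation*}
\sup_{(t,x)} \bigl|u^{\beta^N}(t,x) - u^\beta(t,x)\bigr| \le \sup_x \bigl|w^{\beta^N}(x) - w^\beta(x)\bigr| \to 0.
\end{equation*}
Next I will show $v^{\xi^{\beta^N}}(t,x) \to v^{\xi^\beta}(t,x)$ pointwise. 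By \eqref{vxibeta-vxi}, it suffices to prove $\e^\xi[L_{t \wedge \sigma_{\R^{\beta^N}}}^x] \to \e^\xi[L_{t \wedge \sigma_{\R^\beta}}^x]$. Since $w^{\beta^N}(y) = 0$ for $y \notin (\ell^N, r^N)$, Corollary~\ref{cor:R-is-barrier}\ref{item:t-bar-is-zero} gives $[0,\infty) \times (\bI \setminus (\ell^N, r^N)) \subset \R^{\beta^N}$, whence $\sigma_{\R^{\beta^N}} \le H_{\ell^N,r^N}$, the first exit from $(\ell^N,r^N)$ after $T_\xi$. The monotone increase $\R^{\beta^N} \cap ([0,\infty)\times[-N,N]) \uparrow \R^\beta \cap ([0,\infty)\times[-N,N])$ in closure from Lemma~\ref{prop:2}, together with $H_{\ell^N,r^N} \to \infty$ $\p^\xi$-a.s.\ (the diffusion is non-explosive under the growth bound on $\eta$), will give $\sigma_{\R^{\beta^N}} \wedge t \to \sigma_{\R^\beta} \wedge t$ $\p^\xi$-a.s. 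Joint continuity of the local time in both variables and the bound from Lemma~\ref{lem:vxi-immediate} will then upgrade this pointwise convergence to convergence of expectations, and combining with the identity $u^{\beta^N} = v^{\xi^{\beta^N}}$ yields $u^\beta = v^{\xi^\beta}$.

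\textbf{Main obstacle.} The delicate step is the almost sure convergence of the stopping times, since the two contributions to $\sigma_{\R^{\beta^N}}$ pull in opposite directions: the growing inner barrier $\R^{\beta^N}\cap([0,\infty)\times[-N,N])$ forces earlier stopping, whereas the outer walls at $\ell^N, r^N$ recede to infinity and postpone it. On the event $\{t \wedge \sigma_{\R^\beta} < H_{\ell^N,r^N}\}$, whose probability tends to $1$, the inclusion $\R^{\beta^N} \cap ([0,\infty)\times[-N,N]) \subset \R^\beta$ gives $\sigma_{\R^{\beta^N}} \wedge t \ge \sigma_{\R^\beta} \wedge t$, providing the liminf inequality. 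The reverse direction is more subtle and rests on a right-continuity argument for hitting times of increasing sequences of closed sets, combined with the $\xi$-regularity of $\R^\beta$ itself furnished by Remark~\ref{rem:regular lt} and the monotone barrier approximation of Lemma~\ref{prop:2}. This is the technical crux of the argument; the remaining steps are routine.
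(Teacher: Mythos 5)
Your overall strategy — approximate $\beta$ by the $\ax$-finitely supported measures $\beta^N$, invoke Proposition~\ref{prop:FinSupp} for each, and pass to the limit — is the same as the paper's, and the reductions you make (uniform convergence $u^{\beta^N}\to u^\beta$ from the uniform convergence $w^{\beta^N}\to w^\beta$; reducing $v^{\xi^{\beta^N}}\to v^{\xi^\beta}$ to convergence of $\Eps{\xi}{L^x_{t\wedge\sigma_{\R^{\beta^N}}}}$ via \eqref{vxibeta-vxi}) are sound. The proposal correctly isolates the technical crux: one needs $\Eps{\xi}{L^x_{t\wedge\sigma_{\R^{\beta^N}}}}\to\Eps{\xi}{L^x_{t\wedge\sigma_{\R^\beta}}}$. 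The paper's own proof handles this by writing $H_{\pm N}=\inf\{t\ge\Tx:|X_t|=N\}$, exploiting monotone convergence for $L^x_{t\wedge\sigma_{\R^\beta}\wedge H_{\pm N}}\nearrow L^x_{t\wedge\sigma_{\R^\beta}}$, and comparing local times of the two stopped processes on $\{H_{\pm N}>t\}$; the convergence of the stopping times required there is attributed to Lemma~\ref{prop:2}.

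However, the proposal has a genuine gap exactly at the step you flag. First, there is no general ``right-continuity of hitting times of increasing sequences of closed sets'': the claim $\sigma_{\R^{\beta^N}}\wedge t\to\sigma_{\R^\beta}\wedge t$ $\p^\xi$-a.s. must use the specific structure — namely that eventually (on $\{H_{\pm N}>t\}$) the restricted barriers $\R^{\beta^N}\cap(-N,N)$ are \emph{nested} and increase to $\R^\beta$, that the hitting point $(\sigma_{\R^\beta},X_{\sigma_{\R^\beta}})$ is the limit of points $(t_k,x_k)\in\R^{\beta^{N_k}}$, and that a regular one-dimensional diffusion started from an interior point visits every nearby level at times arbitrarily close to $\sigma_{\R^\beta}$. (This is precisely the kind of path argument the paper spells out in the analogous step of Lemma~\ref{lem:finalequality}, where the dyadic grid gives the quantitative ``travel at least $2^{-m+1}$'' estimate.) You do not supply this. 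Second, invoking the $\xi$-regularity of $\R^\beta$ at this point is circular: in the paper, $\xi$-regularity is \emph{deduced} from the identity $u^\beta=v^{\xi^\beta}$ via Remark~\ref{rem:regularity}, which is exactly what you are trying to establish. Remark~\ref{rem:regular lt} is only a characterisation and cannot be used to assert regularity here without already knowing the local-time identity. To complete the proof you would need to replace the $\xi$-regularity appeal by the explicit path argument indicated above, or adopt the paper's local-time comparison behind the localisation $H_{\pm N}$, which sidesteps the issue.
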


\begin{proof}
  It follows from Lemma \ref{prop:2} that $\sigma^{\beta^N}$ decreases to
  $\sib$, and $X_{\sigma^{\beta^N}}$ converges to $X_{\sib}$ in probability, and therefore $X_{\sib} \sim \beta$. Finally, if we write $H_{\pm N} = \inf \{ t \ge \Tx : |X_t| = N\}$, we also have
\begin{align*}
  \Eps{\xi}{L_{t \wedge \sib}^x} & = \lim_{N \to \infty} \Eps{\xi}{L_{t \wedge \sib \wedge H_{\pm N}}^x} \\
  & = \lim_{N \to \infty} \Eps{\xi}{L_{t \wedge \sigma^{\beta^N} \wedge H_{\pm N}}^x} \\
  & = \lim_{N \to \infty} \left[v^\xi(t,x) -u^{\beta^N}(t,x)\right]\\
  & = v^\xi(t,x) - u^\beta(t,x),
\end{align*}
where we used \eqref{vxibeta-vxi} and monotone convergence. It follows from Remark \ref{rem:ubeta-vxi} that $v^{\xb} = u^\beta$.

Since $X_{\sib} \sim \beta$ and $X_0 \sim \alpha$, and $v^\xi(t,x) - u^\beta(t,x) \to -w^\beta(x)$ as $t \to \infty$, by monotone convergence, we have $\Eps{\xi}{L_{t \wedge \sib}^x} = -w^\beta(x)$, and hence by \cite[Corollary~3.4]{Elworthy:1999aa}, $\sib$ is a UI stopping time. Finally, we deduce that $\R^\beta$ is $\xi$-regular by observing from \eqref{eq:Rmu_definition_new} and taking limits in the  equation above that $(t,x) \in \R^\beta$ if and only if $\Eps{\xi}{L_{t \wedge \sib}^x} = w^\beta(x) = \Eps{\xi}{L_{\sib}^x}$. From Remark~\ref{rem:regular lt}, it follows that $\R^\beta$ is $\xi$-regular.
\end{proof}

% In particular, it now follows from Remark~\ref{rmk:UI_Poten} that the stopping time corresponding to a locally finitely supported measure is uniformly integrable.

\section{The general case}
\label{sect:approxatom}
\setcounter{equation}{0}

In this section, we complete the proof of Theorem \ref{thm:main_general_start}. We fix 
$\sx\in\T$ with its corresponding time-space distribution $\xi$, and let $\beta$ be an arbitrary integrable measure such that $\beta \succeq_{\text{cx}} \ax$. We start by approximating $\beta$ with a sequence of locally finitely supported measures. Let
\begin{eqnarray}\label{notations-nk}
I^k_m := [k2^{-m},(k+1)2^{-m}] \cap \bI,
&\mbox{and}&
t_m^k := \min_{x\in I_m^k} \ovl t^\beta(x)=\ovl t^\beta(x^k_m)
~\mbox{with}~(t^k_m,x^k_m)\in \R^\beta,~x^k_m\in I_m^k.
\end{eqnarray}
We set $t^k_m = \infty$ when there are no points of $\R^\beta$ in $[0,\infty) \times I^k_m$, see Corollary \ref{cor:R-is-barrier} for a characterisation. The existence of a minimizer $x^k_m$ follows from the lower semicontinuity
of the barrier function $\ovl t^\beta$ which, in turn, is implied by the
closedness property of the barrier $\R^\beta$. If there exists more than one
minimiser, we choose the smallest: $x^k_m=\min\{x\in I_m^k: \ovl
t^\beta(x)=t_m^k\}$, so that if $(t,x) = (t^k_m,x^k_m)$, then $(t,x) =
(t^{k'}_{m+1},x^{k'}_{m+1})$ for some $k'$. Note that $0\leq x^{k+1}_m-x^k_m\leq
2^{-m+1}$.

We now determine a sequence of approximating measures defined as follows: the measure $\beta^m$ is defined through its potential function, $U^{\beta^m}(x)$, and we set $U^{\beta^m}(x)$ to be the smallest concave function such that $U^{\beta^m}(x^k_m) = U^{\beta}(x^k_m)$ for all $k$. In particular, we deduce that $U^{\beta^m}(x) \le U^{\beta^{m+1}}(x) \le U^\beta(x)$; moreover, $\beta^m$ has the same mean as $\beta$, $\beta^{m} \succeq_{\text{cx}}\beta^{m+1} \succeq_{\text{cx}} \beta$ and $U^{\beta^m}(x) - U^\beta(x) \to 0$ as $x \to \partial \I$ for each $m$. This approximation is depicted in Figure~\ref{fig:GenApprox}.

\begin{figure}[ht]
  \centering
  \includegraphics[width=0.7\textwidth]{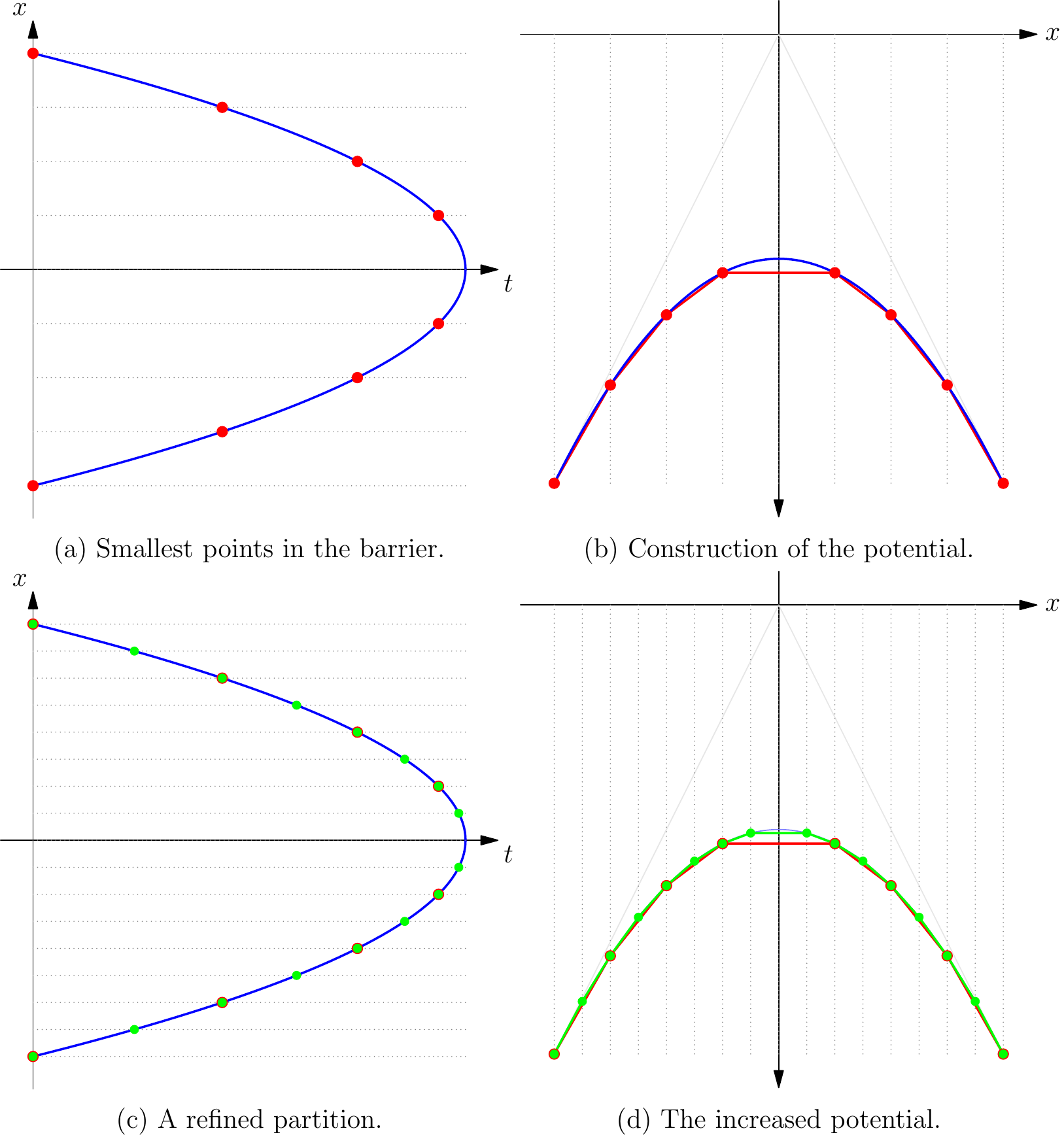}
  \caption{The approximation sequence of a general measure $\beta$. In (a), the
    red points denote the smallest point in the barrier for the given
    subdivisions (marked in gray). In (b), the original potential (in blue) is
    interpolated at the corresponding $x$-values, to produce a smaller potential
    corresponding to a measure $\beta^m$. In (c), a finer set of intervals are
    used to produce additional approximating points. Note that the previous
    (red) points are all in the new set of approximating points. In (d), these
    points are used to produce the potential of a new measure $\beta^{m+1}$.}
  \label{fig:GenApprox}
\end{figure}

Each $\beta^m$ is locally finitely supported, and so we can apply
Proposition~\ref{prop:LocFinSupp} to each $\beta^m$. Write $\R^m :=
\R^{\beta^m}$ for the corresponding barrier. 
A typical sequence of barriers are depicted in Figure~\ref{fig:BarApprox}.
\begin{figure}[ht]
  \centering
  \includegraphics[width=0.6\textwidth]{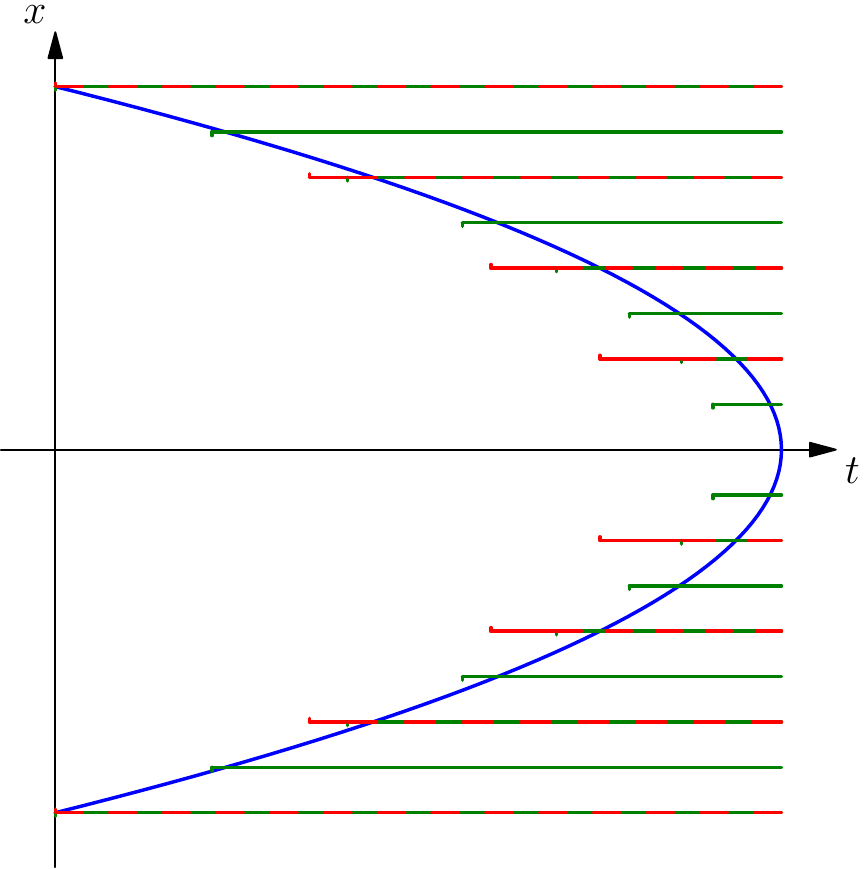}
  \caption{The sequence of barriers constructed by the approximation
    sequence. The red barrier corresponds to $\beta^m$, and the green barrier to
    $\beta^{m+1}$. Where the barriers have common atoms, the green barrier is to
    the right of the left barrier, however new `spikes' appear for the green
    barrier. The blue line denotes the barrier $\R^\beta$.}
  \label{fig:BarApprox}
\end{figure}
Since the potentials of the
measures are increasing, we have $u^{\beta^m}(t,x) \le u^{\beta^{m+1}}(t,x)$; in
addition, the function $U^{\beta^m}(x)$ is piecewise linear, and so $(t,x) \in
\R^m$ implies $x = x^k_m=x^{k'}_{m+1}$, for some $k,k'$, and $U^{\beta^m}(x) =
U^{\beta^{m+1}}(x)=U^\beta(x)$. In consequence, for such an $x$ we have
  \begin{equation}\label{eq:vuineq}
     v^\xi(t,x) + w^{\beta}(x) = v^\xi(t,x) + w^{\beta^{m+1}}(x)= v^\xi(t,x) + w^{\beta^{m}}(x)\le u^{\beta^{m}}(t,x) \le u^{\beta^{m+1}}(t,x) \le u^\beta(t,x)
  \end{equation}
and it follows from the optimal stopping formulation that $\ovl t_{\R^m}(x)\leq \ovl t_{\R^{m+1}}(x)\le \ovl t^{\beta}(x)$ --- i.e.~new spikes may
appear, but existing spikes get smaller. Taking a sequence $k_m$ such that $x =
x_m^{k_m}$ for all $m \ge m_0$, for some $m_0$, we see that $\ovl t_{\R^m}(x)$ increases to a limit. We now establish that this limit is equal to $\ovl t^\beta(x)$. 

\begin{lemma}\label{lem:1}
Let
\begin{eqnarray}
  \label{eq:Rdefn}
  &\R := %\mbox{cl}\Big(\bigcap_{n \ge 0} \bigcup _{k \ge n} \R^k\Big)
  \ovl{\bigcap_{m \ge 0} \bigcup _{k \ge m} \R^k}.
\end{eqnarray}
Then $\R = \R^\beta$ and for any $x$ of the form $x=x_m^{k_m}$, for some sequence of indices $(k_m)$, $\ovl t_{\R^m}(x)\nearrow \ovl t^\beta(x)$. 
%In particular, $\ovl t^\beta$ is the lower semi-continuous minorant of the function $t$.
\end{lemma}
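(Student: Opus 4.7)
The plan is to first upgrade the pointwise convergence $U^{\beta^m}\nearrow U^\beta$ to uniform-on-compacts convergence $u^{\beta^m}\nearrow u^\beta$, and then to identify the limit barrier. Since $U^{\beta^m}$ is the piecewise-linear interpolation of the concave, $1$-Lipschitz function $U^\beta$ at the selected points, consecutive selected points have spacing at most $2\cdot 2^{-m}$ wherever they occur, and $U^\beta$ is linear on any gap between them (by the structural characterisation in Corollary~\ref{cor:R-is-barrier}). A direct estimate then shows $U^{\beta^m}\to U^\beta$ uniformly on compact subsets of $\bI$, hence $w^{\beta^m}\to w^\beta$ uniformly on compacts while $(w^{\beta^m})$ remains uniformly bounded on $\bI$. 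The monotonicity $u^{\beta^m}\le u^{\beta^{m+1}}\le u^\beta$ is already recorded in \eqref{eq:vuineq}. For the matching lower bound, I will insert the optimal stopper $\tau^\ast$ of $u^\beta(t,x)$ provided by Proposition~\ref{prop:existence} into the definition of $u^{\beta^m}(t,x)$ and apply bounded convergence to get $\liminf_m u^{\beta^m}(t,x)\ge u^\beta(t,x)$. The $\beta$-independent Lipschitz/H\"older bounds in Lemma~\ref{lem:opt-stop-prop}\,(i), combined with the monotone pointwise convergence just obtained, then yield uniform convergence on compacts via Dini's theorem.

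Next, I will deduce that $\ovl t_{\R^m}(x)\nearrow \ovl t^\beta(x)$ for $x=x_m^{k_m}$. By persistence of the selection, $U^{\beta^{m'}}(x)=U^\beta(x)$, and hence $w^{\beta^{m'}}(x)=w^\beta(x)$, for all sufficiently large $m'$. The nondecreasing sequence $\ovl t_{\R^{m'}}(x)$ admits a limit $t_\infty(x)\le \ovl t^\beta(x)$. For any $t>t_\infty(x)$ and $m'$ large, $(t,x)\in\R^{m'}$, so $u^{\beta^{m'}}(t,x)=v^\xi(t,x)+w^\beta(x)$; passing to the limit via the first step yields $u^\beta(t,x)=v^\xi(t,x)+w^\beta(x)$, i.e.\ $(t,x)\in\R^\beta$, whence $\ovl t^\beta(x)\le t$. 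Sending $t\searrow t_\infty(x)$ gives $\ovl t^\beta(x)\le t_\infty(x)$, and hence equality.

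For the identification $\R=\R^\beta$, I will establish both inclusions. To obtain $\R\subseteq \R^\beta$, I will extract from $(t,x)\in \R$ a sequence $(t_j,x_j)\to(t,x)$ with $(t_j,x_j)\in \R^{k_j}$ and $k_j\nearrow\infty$ (a diagonal argument from the closure-of-limsup definition of $\R$), and pass to the limit in the identity $u^{\beta^{k_j}}(t_j,x_j)=v^\xi(t_j,x_j)+w^{\beta^{k_j}}(x_j)$ using uniform-on-compacts convergence of all three functions. For $\R^\beta\subseteq \R$, given $(t,x)\in\R^\beta$, I will set $x_m':=x_m^{k(m,x)}$ where $k(m,x)$ is the index of the dyadic interval containing $x$. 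Since $\ovl t^\beta(x)\le t<\infty$, the minimiser $x_m'$ is well-defined and selected at level $m$ (and, by persistence, at every level $m'\ge m$); moreover $|x_m'-x|\le 2^{-m}$. The minimising property together with the previous step yield
\[
  \ovl t_{\R^{m'}}(x_m')\;\le\;\ovl t^\beta(x_m')\;\le\;\ovl t^\beta(x)\;\le\;t\qquad\text{for all } m'\ge m,
\]
so $(t,x_m')\in \bigcap_{m_0\ge 1}\bigcup_{k\ge m_0}\R^k$, and letting $m\to\infty$ places $(t,x)$ in $\R$.

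The main obstacle will be the first step: transferring monotone pointwise convergence of the obstacles to uniform-on-compacts convergence of the corresponding value functions. The $\beta$-independent equicontinuity of Lemma~\ref{lem:opt-stop-prop}\,(i) is essential here, and it also drives the closure argument in the third step, where continuity of the value function lets us pass to the limit inside the identity defining $\R^\beta$.
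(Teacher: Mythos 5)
Your proof is correct, but it takes a genuinely different route from the paper for the inclusion $\R\subseteq\R^\beta$. The paper argues as follows: if $(t,x)\in\R^{k_m}$ for a sequence $k_m\nearrow\infty$, then $x$ must be a point at which the construction of $\beta^{k_m}$ forces $U^{\beta^{k_m}}(x)=U^\beta(x)$ (either $x$ is a selected knot $x^j_{k_m}$, or $w^{\beta^{k_m}}(x)=0$, which combined with $U^{\beta^{k_m}}\le U^\beta\le U^{\ax}$ forces $w^\beta(x)=0$); hence $w^{\beta^{k_m}}(x)=w^\beta(x)$ exactly, and one only needs \emph{pointwise} convergence $u^{\beta^{k_m}}(t,x)\to u^\beta(t,x)$ to conclude $(t,x)\in\R^\beta$, with the closure handled by closedness of $\R^\beta$. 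You instead establish uniform-on-compacts convergence $u^{\beta^m}\to u^\beta$ and pass to the limit along a diagonal sequence $(t_j,x_j)\to(t,x)$, using continuity of the limit to handle the closure. Both are valid. Your route has a real advantage: it supplies an explicit proof of the pointwise convergence $u^{\beta^m}(t,x)\to u^\beta(t,x)$ (via insertion of the optimal stopper $\tau^\ast$ and bounded convergence, then Dini), which the paper uses without comment. The cost is that you need the uniform-on-compacts bound $\sup_K|U^{\beta^m}-U^\beta|\to 0$; your sketch of this (Lipschitz on short gaps, affinity on long gaps via Corollary~\ref{cor:R-is-barrier}\ref{item:R-gap-implies-beta-null}) is sound but slightly glosses over the margins of width $\le 2^{-m}$ adjacent to the gap endpoints, where $U^\beta$ need not be affine; these are harmless thanks to the $1$-Lipschitz bound, giving error $O(2^{-m})$ throughout. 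The other two steps --- the inclusion $\R^\beta\subseteq\R$ via the persistent selected points, and the monotone convergence $\ovl t_{\R^m}(x)\nearrow\ovl t^\beta(x)$ --- match the paper's argument, though you carry out the barrier-time convergence before the set equality rather than after it, which is a harmless reordering.
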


\begin{proof}
We first show $\R \subseteq \R^\beta$. Let $(t,x) \in \bigcap_{m \ge 0}
  \bigcup _{k \ge m} \R^k$. Then, for all $m\ge 1$, there is $k_m\ge m$ such that $(t,x)\in\R^{k_m}$, i.e. $(u^{\beta^{k_m}}-v^\xi)(t,x) = w^{\beta^{k_m}}(x) = w^\beta(x)$. However $u^{\beta^{k_m}}(t,x) \to u^\beta(t,x)$ as $m \to \infty$, and so $(u^\beta-v^\xi)(t,x)=w^\beta(x)$, proving that $(t,x)\in\R^\beta$. This shows that $\bigcap_{m \ge 0}
  \bigcup _{k \ge m} \R^k\subset \R^\beta$, and therefore $\R\subset\R^\beta$ by the closeness of $\R^\beta$.

We now show the reverse inclusion, $\R^\beta \subseteq \R$. For
  $(t,x) \in \R^\beta$, and $\eps>0$, choose $m_0$ so that $2^{-m_0}< \eps$. Then there exists $x'$ such that $|x-x'| < \eps$ and $(t',x') \in \R^{m_0}$ for some $t'$ and $\ovl t^{\beta}(x') \le
  \ovl t^\beta(x) \le t$ by our choice of points $x^k_m$. Further, as argued above, $\ovl t_{\R^{m_0}}(x')\leq \ovl t_{\R^{m}}(x') \le \ovl t^{\beta}(x')\le t$ so that $(t,x') \in \R^m$ for all $m \ge m_0$. It follows that $(t,x)\in \R$.
  
The above shows $\R=\R^\beta$, or equivalently $\ovl t_\R=\ovl t^\beta$. As observed above, for $x=x_m^{k_m}$, we have $\ovl t_{\R^m}(x)$ is an increasing sequence in $m$ and hence converges to some limit which we denote $t(x)$. By the barrier property of each $\R_m$ and the definition of $\R$ we see that $(t(x),x)\in \R$. It follows that $\ovl t_{\R}(x)\leq t(x) \leq \ovl t^\beta(x)$ and hence all three are equal.
\end{proof}

\begin{prop}
  \label{prop:3}
  Consider the approximation sequence above and define $\sigma^m = \sigma_{\R^m}\land \sigma_{\R^\beta}$. Then:
  \begin{enumerate}
  \item the process $(X_{t \wedge \sigma^m})_{t \ge \Tx}$ is uniformly
    integrable under $\p^\xi$;
  \item $\sigma^m \to \sigma_{\R^\beta}$;
%  \item $\Eps{\xi}{L_{\sigma_{\R^\beta}}^x} \le U^{\ax}(x) - U^\beta(x)$;
  \item $\Eps{\xi}{L_{t \wedge \sigma_{\R^\beta}}^x} \le v^\xi(t,x) - u^{\beta}(t,x)$.
  \end{enumerate}
\end{prop}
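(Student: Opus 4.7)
For part (i), the plan is to invoke Proposition~\ref{prop:LocFinSupp} applied to the locally finitely supported measure $\beta^m$: this yields that $\sigma_{\R^m}$ is a UI stopping time, so that $(X_{t \wedge \sigma_{\R^m}})_{t \ge \Tx}$ is a UI martingale under $\p^\xi$. Since $\sigma^m = \sigma_{\R^m} \wedge \sigma_{\R^\beta} \le \sigma_{\R^m}$, optional stopping gives $X_{t \wedge \sigma^m} = \Eps{\xi}{X_{t \wedge \sigma_{\R^m}} \mid \Fc_{t \wedge \sigma^m}}$, and hence $(X_{t \wedge \sigma^m})_{t \ge \Tx}$ is UI as a family of conditional expectations of the single integrable random variable $X_{\sigma_{\R^m}}$.

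For part (iii), assuming (ii) I proceed as follows. By Proposition~\ref{prop:LocFinSupp} applied to $\beta^m$, we have $u^{\beta^m} = v^{\xi^{\beta^m}}$, which by Remark~\ref{rem:ubeta-vxi} translates to $v^\xi(t,x) - u^{\beta^m}(t,x) = \Eps{\xi}{L^x_{t \wedge \sigma_{\R^m}}}$. Since $\sigma^m \le \sigma_{\R^m}$ and the local time is non-decreasing in time, $\Eps{\xi}{L^x_{t \wedge \sigma^m}} \le v^\xi(t,x) - u^{\beta^m}(t,x)$. I then let $m \to \infty$. On the right-hand side, $u^{\beta^m} \nearrow u^\beta$ pointwise: this follows from the monotone convergence $w^{\beta^m} \nearrow w^\beta$ established in the discussion preceding \eqref{eq:vuineq}, combined with monotone convergence inside the reward in \eqref{eq:opt_stop_new} for each fixed stopping time, and passing to the suprema. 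On the left, (ii) together with joint continuity of $L^x$ in time yields $L^x_{t \wedge \sigma^m} \to L^x_{t \wedge \sigma_{\R^\beta}}$ $\p^\xi$-a.s., and Fatou's lemma delivers $\liminf_m \Eps{\xi}{L^x_{t \wedge \sigma^m}} \ge \Eps{\xi}{L^x_{t \wedge \sigma_{\R^\beta}}}$, which gives the claimed inequality.

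The main obstacle is part (ii). Since $\sigma^m \le \sigma_{\R^\beta}$ holds by construction, only the lower bound $\liminf_m \sigma^m \ge \sigma_{\R^\beta}$ requires work. The difficulty is that the sequence $(\R^m)_m$ is not ordered by inclusion: new support points of $\beta^{m+1}$ give rise to new spikes in $\R^{m+1}$ absent from $\R^m$, while at common support points existing spikes are pushed to later times, so $(\sigma^m)_m$ need not be monotonic. The plan is a pathwise contradiction argument: fix $\omega$ with $\sigma_{\R^\beta}(\omega)>0$ and suppose $\liminf_m \sigma^m(\omega) < \sigma_{\R^\beta}(\omega)$. Extract a subsequence with $\sigma^{m_k}(\omega) = \sigma_{\R^{m_k}}(\omega) \to s < \sigma_{\R^\beta}(\omega)$, producing hitting points $(s_k, X_{s_k}) \in \R^{m_k}$ with $X_{s_k} = x^{j_k}_{m_k}$ a support point of $\beta^{m_k}$, which converge to $(s, X_s)$ by continuity of $X$. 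The goal is to force $(s, X_s) \in \R^\beta$. Using the nestedness of support points $\{x^k_m\} \subset \{x^k_{m+1}\}$, the monotone convergence $\ovl t_{\R^m}(x) \nearrow \ovl t^\beta(x)$ at each fixed support point (Lemma~\ref{lem:1}), the lower semi-continuity of $\ovl t^\beta$, and the identification $\R^\beta = \R$ from Lemma~\ref{lem:1}, one argues (splitting on whether $x^{j_k}_{m_k}$ is eventually constant or not) that $(s, X_s) \in \R^\beta$, contradicting $s < \sigma_{\R^\beta}$.
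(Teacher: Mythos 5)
Your proofs of parts (i) and (iii) are correct and follow essentially the same route as the paper. In (i) the only substantive step is the observation that $\sigma^m \le \sigma_{\R^m}$ with $\sigma_{\R^m}$ a UI stopping time by Proposition~\ref{prop:LocFinSupp}; the detail you add via optional stopping is fine (one can also just note that uniform integrability of $(X_{t\wedge\sigma_{\R^m}})$ immediately implies that of $(X_{t\wedge\sigma^m})$ since $\sigma^m\le\sigma_{\R^m}$). In (iii) your derivation from Remark~\ref{rem:ubeta-vxi}, the bound $\sigma^m\le\sigma_{\R^m}$, and the passage to the limit via (ii) and Fatou (or dominated convergence, since $\Eps{\xi}{L_{t\wedge\sigma_{\R^\beta}}^x}\le\Eps{\mu_0}{L_t^x}<\infty$) gives exactly what the paper records.

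Part (ii) is where you take a genuinely different route, and it is where your proposal has a gap. The paper argues through the \emph{value functions}: on the event $\sigma^m(\omega)=\sigma_{\R^m}(\omega)\to t_\infty<\sigma_{\R^\beta}(\omega)$, the barrier relation gives $v^\xi(t_m,y_m)+w^{\beta^m}(y_m)=u^{\beta^m}(t_m,y_m)$; since $y_m$ is a point where $U^{\beta^m}=U^\beta$ one has $w^{\beta^m}(y_m)=w^\beta(y_m)$; then both sides pass to the limit — the left by continuity of $v^\xi$ and $w^\beta$, the right by the \emph{uniform-in-$m$} Lipschitz/H\"older modulus of $u^{\beta^m}$ from Lemma~\ref{lem:opt-stop-prop}(i) together with pointwise convergence $u^{\beta^m}\to u^\beta$. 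This yields $u^\beta(t_\infty,y_\infty)=v^\xi(t_\infty,y_\infty)+w^\beta(y_\infty)$, i.e.\ $(t_\infty,y_\infty)\in\R^\beta$, directly, without any need to track the geometry of the barriers. Your geometric argument works in the eventually-constant case: there $y_k\equiv y$ for large $k$, so $\ovl t_{\R^{m_k}}(y)\le s_k$ forces $\ovl t^\beta(y)=\lim_m\ovl t_{\R^m}(y)\le s$ by Lemma~\ref{lem:1}. But the remaining case has a real hole. You know $\ovl t_{\R^{m_k}}(y_k)\le s_k$, and you would like $\ovl t^\beta(y_\infty)\le s$. The pointwise convergence $\ovl t_{\R^m}(y_k)\nearrow\ovl t^\beta(y_k)$ of Lemma~\ref{lem:1} is along $m\to\infty$ for \emph{fixed} $y_k$, and is not uniform, so evaluating it along the diagonal $m=m_k$ gives no control on $\ovl t^\beta(y_k)-\ovl t_{\R^{m_k}}(y_k)$. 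Lower semi-continuity of $\ovl t^\beta$ gives only $\ovl t^\beta(y_\infty)\le\liminf_k\ovl t^\beta(y_k)$, which points the wrong way: you have lower barrier bounds $\ovl t_{\R^{m_k}}(y_k)\le s_k$, not upper bounds on $\ovl t^\beta(y_k)$. Nor can one argue via closedness of $\R=\R^\beta$: $(s_k,y_k)\in\R^{m_k}$ does not put $(s_k,y_k)$ into $\bigcap_{m}\bigcup_{n\ge m}\R^n$, because $\ovl t_{\R^n}(y_k)$ is \emph{increasing} in $n$, so for $n>m_k$ one may well have $\ovl t_{\R^n}(y_k)>s_k$; indeed, since by assumption $(s_k,y_k)\notin\R^\beta$, the point $(s_k,y_k)$ belongs to only finitely many of the $\R^n$. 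So the limit point $(s,y_\infty)$ is not exhibited as a limit of points of $\bigcap_m\bigcup_{n\ge m}\R^n$, and closedness of $\R$ is unavailable. The tools you list do not close this gap, and the argument should instead go through the value functions as the paper does.
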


\begin{proof}
(i) By definition $\sigma^m \le \sigma_{\R^m}$ and, from Proposition~\ref{prop:LocFinSupp}, the same process stopped at $\sigma_{\R^m}$ is uniformly integrable, which implies the result.
\\
(ii) %From Lemma~\ref{lem:1}, we know that $\ovl t_{\R^n}(x)\nearrow \ovl t^\beta(x)$ for $x=x_n^{k_n}$ which readily gives the convergence. Indeed, 
% Recall that $\sigma_{\R^\beta}(\omega)<\infty$ a.s. and s
Suppose that $\sigma^m$ does not converge a.s. to $\sigma_{\R^\beta}$. Take $\omega$ such that, possibly passing to a subsequence, we have that $\sigma^m(\omega)\to t_\infty$ for some $t_\infty< \sigma_{\R^\beta}(\omega)$. Then necessarily $(t_m,y_m):=(\sigma^m(\omega),X_{\sigma^m(\omega)})\in \R^m$ for $m$ large enough. This gives
$$     v^\xi(t_m,y_m) + w^{\beta}(y_m) = v^\xi(t_m,y_m) + w^{\beta^{m}}(y_m)= u^{\beta^{m}}(t_m,y_m).$$
We take limits on both sides. The left-hand side converges to $v^\xi(t_\infty,y_\infty)+w^\beta(y_\infty)$ by continuity of $w^\beta$ and joint continuity of $v^\xi$, see \eqref{eq:v_xi_alternative}. For the right-hand side we use $1$-Lipschitz continuity of each $u^{\beta^m}(t,\cdot)$ and $1/2$-H\"older continuity in $t$ as given in Lemma \ref{lem:opt-stop-prop}. This shows, with $X_{t_\infty}=:y_\infty$, that
$$ |u^{\beta^m}(t_\infty,y_\infty)-u^{\beta^m}(t_m,y_m)|\leq C (1+|y_\infty|) \sqrt{|t_\infty - t_m|} + |y_\infty - y_m| $$
for a constant $C$ independent of $m$, and hence 
$$\lim_{m\to \infty} u^{\beta^m}(t_m,y_m) = \lim_{m\to \infty} u^{\beta^m}(t_\infty,y_\infty) = u^\beta(t_\infty,y_\infty),$$
which then shows that $(t_\infty,y_\infty)\in \R^\beta$ and hence $\sigma_{\R^\beta}(\omega)\leq t_\infty$ which gives the desired contradiction. 
\\
(iii) Using the above, together with Proposition~\ref{prop:LocFinSupp} and Remark \ref{rem:ubeta-vxi}, we deduce that
  \begin{equation*}
    \Eps{\xi}{L_{t \wedge \sigma_{\R^\beta}}^x} =\lim_{m \to \infty}
    \Eps{\xi}{L_{t \wedge \sigma^m}^x} \le \lim_{m \to \infty}
    \Eps{\xi}{L_{t \wedge \sigma_{\R^m}}^x} = \lim_{m \to \infty} \left[
      v^\xi(t,x) - u^{\beta^m}(t,x)\right] = v^\xi(t,x) - u^\beta(t,x).
  \end{equation*}
\end{proof}

\begin{lemma}\label{lem:finalequality}
  We have $v^{\xb} = u^\beta$ and $\sxb$ is a UI stopping time embedding $\beta$.
\end{lemma}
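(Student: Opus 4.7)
The plan is to derive the equality $v^{\xi^\beta}=u^\beta$ and then invoke Lemma~\ref{lem:u-beta-second-prop}, whose other hypothesis $u^\beta(t,\cdot)\to U^\beta$ as $t\to\infty$ is Lemma~\ref{lem:opt-stop-prop}\ref{item:v-xi-u-beta-ineq}, to conclude that $\sxb$ is a UI stopping time embedding $\beta$. By Remark~\ref{rem:ubeta-vxi}, the equality $v^{\xi^\beta}=u^\beta$ is equivalent to
\begin{equation*}
  v^\xi(t,x)-u^\beta(t,x)=\Eps{\xi}{L^x_{t\wedge\sigma_{\R^\beta}}},\qquad (t,x)\in\re_+\times\bI,
\end{equation*}
and Proposition~\ref{prop:3}(iii) already supplies the inequality $\ge$. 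I focus on the matching reverse inequality.

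To obtain it, I use Proposition~\ref{prop:LocFinSupp} applied to each approximating measure $\beta^m$ constructed above, which gives $u^{\beta^m}=v^{\xi^{\beta^m}}$, equivalently $v^\xi(t,x)-u^{\beta^m}(t,x)=\Eps{\xi}{L^x_{t\wedge\sigma_{\R^m}}}$. Since $U^{\beta^m}\nearrow U^\beta$, monotonicity of the optimal stopping value in the obstacle gives $u^{\beta^m}\nearrow u^\beta$, so the left-hand side decreases to $v^\xi(t,x)-u^\beta(t,x)$ as $m\to\infty$. The remaining task is to identify this limit with $\Eps{\xi}{L^x_{t\wedge\sxb}}$. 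For this I extract a subsequence $m_k$ along which $\sigma_{\R^{m_k}}\to\sxb$ a.s.\ under $\p^\xi$; combined with the $L^2$-boundedness of the stopped diffusion (obtained via BDG and Gr\"onwall from $|\eta(x)|^2\le C_\eta(1+|x|^2)$, as already exploited in Lemma~\ref{lem:vxi-immediate}), Vitali's theorem yields $X_{t\wedge\sigma_{\R^{m_k}}}\to X_{t\wedge\sxb}$ in $L^1$ and hence, via Tanaka and strong Markov,
\begin{equation*}
  \Eps{\xi}{L^x_{t\wedge\sigma_{\R^{m_k}}}}\;\longrightarrow\;\Eps{\xi}{L^x_{t\wedge\sxb}}.
\end{equation*}
Since the full sequence converges monotonically, this subsequential limit agrees with the full limit, giving $v^\xi(t,x)-u^\beta(t,x)=\Eps{\xi}{L^x_{t\wedge\sxb}}$. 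Two final consequences follow from this identity: taking $t\to\infty$ and using Lemma~\ref{lem:opt-stop-prop}\ref{item:v-xi-u-beta-ineq} gives $\Eps{\xi}{L^x_{\sxb}}=-w^\beta(x)=U^{\alpha^\xi}(x)-U^\beta(x)$, which by \cite[Corollary~3.4]{Elworthy:1999aa} is precisely the UI property; and $\xi$-regularity of $\R^\beta$ then follows as in Remark~\ref{rem:regularity}.

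The main obstacle is the subsequential a.s.\ convergence $\sigma_{\R^{m_k}}\to\sxb$, since the barriers $\R^m$ are not nested (new spikes may appear as $m$ grows), so the direct monotonicity trick of Proposition~\ref{prop:LocFinSupp} is unavailable. To circumvent this, I combine Lemma~\ref{lem:1}, which identifies $\R^\beta=\ovl{\bigcap_m\bigcup_{k\ge m}\R^k}$, with the path continuity of $X$: for a.e.\ $\omega$, the point $(\sxb,X_{\sxb})$ is a limit of barrier points $(t_j,y_j)\in\R^{k_j}$ with $k_j\to\infty$, and the barrier property $\{(s,y_j):s\ge t_j\}\subset\R^{k_j}$ then forces $X$ to enter $\R^{k_j}$ at a time within $[\sxb-o(1),\sxb+o(1)]$; combined with $\liminf_m\sigma_{\R^m}\ge\sxb$ (from $\sigma_{\R^m}\ge\sigma^m$ and Proposition~\ref{prop:3}(ii)) and a diagonal extraction to obtain a single subsequence valid $\p^\xi$-a.s., this yields the claimed a.s.\ convergence $\sigma_{\R^{m_k}}\to\sxb$. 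Once this is secured, the rest of the argument is mechanical.
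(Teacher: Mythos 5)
Your overall strategy matches the paper's: reduce to the one-sided inequality $\Eps{\xi}{L^x_{t\wedge\sigma_{\R^\beta}}}\ge v^\xi(t,x)-u^\beta(t,x)$ (the other direction being Proposition~\ref{prop:3}(iii)), then pass to the limit along the approximating sequence $\beta^m$ using Proposition~\ref{prop:LocFinSupp} and Remark~\ref{rem:ubeta-vxi}. Where you diverge is in how you control the hitting times $\sigma_{\R^m}$. The paper's key device is the \emph{auxiliary barriers} $\tilde\R^m:=\R^m\cap\R^\beta$, which are nested increasing (because $\ovl t_{\R^m}\le\ovl t_{\R^{m+1}}\le\ovl t^\beta$ at grid points), so that $\sigma_{\tilde\R^m}\searrow\sigma_{\R^\beta}$ monotonically; since also $\sigma_{\R^m}\le\sigma_{\tilde\R^m}$, the passage to the limit is a one-line monotone-convergence argument with no need for uniform-integrability estimates. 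You instead try to show $\sigma_{\R^{m_k}}\to\sigma_{\R^\beta}$ a.s.\ directly for a subsequence, which forces you to handle convergence from both sides and to invoke Vitali/BDG/Gr\"onwall to control the local times --- doable, but considerably heavier than needed (dominated convergence with majorant $L^x_t$ already suffices once a.s.\ convergence is secured).

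The more serious issue is that the crucial a.s.\ convergence step is not actually established. Your argument locates, via Lemma~\ref{lem:1}, barrier points $(t_j,y_j)\in\R^{k_j}$ with $(t_j,y_j)\to(\sigma_{\R^\beta},X_{\sigma_{\R^\beta}})$, and asserts that ``path continuity \ldots forces $X$ to enter $\R^{k_j}$ at a time within $[\sigma_{\R^\beta}-o(1),\sigma_{\R^\beta}+o(1)]$.'' Path continuity alone does \emph{not} give this: continuity says $X_t\approx X_{\sigma_{\R^\beta}}$ for $t$ near $\sigma_{\R^\beta}$, but you need $X$ to \emph{hit the specific level} $y_j$ at some time $\ge t_j$ and near $\sigma_{\R^\beta}$, which requires the a.s.\ two-sided oscillation of the non-degenerate diffusion around its current value --- a property of the local behaviour of diffusions, not of continuous functions in general. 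The paper is careful to phrase this as ``when we hit $\R^\beta$, we are guaranteed to hit $\tilde\R^m$ as soon as we have travelled at least $2^{-m+1}$ in both directions,'' which makes the reliance on oscillation explicit and, because $\tilde\R^m\subset\R^\beta$, keeps the approximating hitting times $\sigma_{\tilde\R^m}\ge\sigma_{\R^\beta}$, so only one-sided control is needed. If you want to retain your route, you should replace ``path continuity'' with an explicit appeal to the a.s.\ oscillation of the diffusion and make the inequality $\sigma_{\R^m}\le\sigma_{\tilde\R^m}$ do the bookkeeping for you --- at which point you will have essentially rediscovered the paper's $\tilde\R^m$ argument.
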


\begin{proof} It suffices to show the first equality as the rest follows from Lemma \ref{lem:opt-stop-prop} \ref{item:v-xi-u-beta-ineq} and Lemma \ref{lem:u-beta-second-prop}. 
  Given (iii) of Proposition~\ref{prop:3} and Remark~\ref{rem:ubeta-vxi}, it remains only to show that
  $\Eps{\xi}{L_{t \wedge \sigma_{\R^\beta}}^x} \ge v^\xi(t,x) - u^{\beta}(t,x)$. We
  consider the alternative approximating sequence: $\tilde{\R}^m := \R^m \cap
  \R^\beta$. Recall from above that if $(t,x)\in \R^m$ then $\ovl t_{\R_m}(x)\leq \ovl t_{\R_{m+1}}(x)\le \ovl t^\beta(x)$ from which it follows that $\tilde{\R}^m$ is an
  increasing sequence of barriers. Moreover, from the definition of the points
  $x_m^k$, we have $\sigma_{\tilde{\R}^m} \searrow \sigma_{\R^\beta}$, since when we
  hit $\R^\beta$, we are guaranteed to hit $\tilde{\R}^m$ as soon as we have
  travelled at least $2^{-m+1}$ in both directions. However
  $\sigma_{\tilde{\R}^m} \ge \sigma_{\R^m}$, and therefore:
  \begin{equation*}
    \Eps{\xi}{L_{t \wedge \sigma_{\R^m}}^x} \le \Eps{\xi}{L_{t \wedge
        \sigma_{\tilde{\R}^m}}^x} \to \Eps{\xi}{L_{t \wedge \sigma_{\R^\beta}}^x}.
  \end{equation*}
  But also $\Eps{\xi}{L_{t \wedge \sigma_{\R^m}}^x} = v^\xi(t,x) -
  u^{\beta^m}(t,x) \to v^\xi(t,x) - u^\beta(t,x)$ and the result
  follows.
\end{proof}
We note that $\xi^\beta$-regularity of $\R^\beta$ now follows from Remark \ref{rem:regularity}.
The proof of Theorem \ref{thm:main_general_start} is complete.

\appendix

\section{Proofs of the optimality results}
\label{appendix:optimal}
 \setcounter{equation}{0}
\renewcommand {\theequation}{A.\arabic{equation}}

 We prove here the results announced in Section \ref{sec:optimality}. We start with establishing the required pathwise inequality.
\begin{proof}[Proof of Lemma \ref{lem:pathwiseineq}]
We proceed in three steps.
\\
{\bf 1.} We first observe that $\varphi_k\ge\varphi_{k+1}$ for all $k=1,\ldots,n$, and $\varphi_n=\varphi_{n+1}$ on $\R^n$. Indeed, notice that $\varphi_k(t,x)=\e^{t,x}[f(\zeta^{k})]$, where $\zeta^{k}$ is the first time we enter $\R^n$, having previously entered the barriers $\R^{n-1}, \R^{n-2}, \dots, \R^{k}$ in sequence. Then $\zeta^{k} \ge \zeta^{k+1}$, $\p^{t,x}$-a.s. implying that $\varphi_k\ge\varphi_{k+1}$ by the non-decrease of  $f$.
\\
{\bf 2.} We next compute that:
  \begin{eqnarray}\label{eq:diff_h_func}
    (h_k-h_{k-1})(t,x) - \lambda_{k-1}(x) 
    &=& 
    \int_t^{\tb^{k-1}(x)} (\varphi_{k-1}-\varphi_k)(s,x) ds.
  \end{eqnarray}
Then, $h_k-h_{k-1}-\lambda_{k-1}\ge 0$ for $t\le\tb^{k-1}(x)$, by Step 1. Next, notice that $t\ge\tb^{k-1}(x)$ if and only if $(s,x) \in \R^{k-1}$ for all $s\in[\tb^{k-1}(x),t]$, and that in this case $\sigma_{\R^{k-1}} = s, \p^{s,x}$-a.s., implying that $\varphi_{k-1}(s,x) = \varphi_k(s,x)$. Hence:
 \begin{eqnarray}\label{prop:GkIneq}
  h_k 
  \ge 
  h_{k-1} + \lambda_{k-1}
  &k=2,\dots,n,~\mbox{with equality on}&
   \R^{k-1}.
  \end{eqnarray}
{\bf 3.} By the previous steps, we have:
  \begin{align*}
    \sum_{i=1}^n   \lambda_i(x_i) 
    + \sum_{i=1}^n & \big[ h_i(s_i,x_i)-h_i(s_{i-1},x_{i-1})\big] + h_1(s_0,x_0) 
    \\
    &=\;
       \sum_{i=1}^n \lambda_i(x_i) 
       + \sum_{i=1}^{n-1} \big[h_i(s_i,x_i)-h_{i+1}(s_i,x_{i})\big] 
       + h_n(s_n,x_n) \\
      & \le\; \lambda_n(x_n) + h_n(s_n,x_n),
        ~~\mbox{with~ ``$=$''~ if}~(s_i,x_i)\in\R^i,~i=1,\ldots,n-1, 
      \\
      & =\;
        \int_0^{s_n} f(t)dt - \psi(x_n) f(0)
        - \int_{s_n}^{\tb^n(x_n)} (\varphi_n-\varphi_{n+1})(t,x_n) dt
       \\
       &\le\; 
       \int_0^{s_n} f(t)dt-\psi(x_n) f(0),
       ~~\mbox{with~ ``$=$''~ if}~(s_n,x_n)\in\R^n,
  \end{align*}
  where we used \eqref{eq:diff_h_func} and $\phi_{n+1}(x)=f(0)\int_0^x \eta(y)^{-2} \, dy$.
\end{proof}
To be able to take expectations in the pathwise inequality when applied to the stopped diffusion, we need to establish suitable (sub)martingale properties. These are isolated in the following lemma.
\begin{lemma}\label{lem:Gsubmart}
Let $f$ be bounded non-negative and non-decreasing, and assume 
 \begin{eqnarray}\label{opt:martcond}
 \int_0^. \phi_k(X_s)dX_s
 &\mbox{is a $\p^{\mu_0}-$martingale for all}&
 k=1,\ldots,n+1.
 \end{eqnarray}
Then, for all $k=1,\ldots,n$, the process $\{h_k(t,X_t)-h_k(0,X_0),t\ge 0\}$ is a $\p^{\mu_0}$-submartingale, and a $\p^{\mu_0}$-martingale on $\left[\sigma_{{k-1}}, \sigma_{k}\right]$.
\end{lemma}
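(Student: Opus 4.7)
The plan is to apply It\^o's formula to $h_k(t,X_t)$ and analyse the resulting finite-variation part using the barrier structure of $\R^k$. Writing $\mathcal{A} := \partial_t + \tfrac12\eta^2\partial_{xx}$ for the space-time generator along $X$, a direct computation of the derivatives of $h_k$ (using $\phi_k'(x) = \varphi_k(0,x)/\eta(x)^2$) gives the drift
\[
\mathcal{A}h_k(t,x) \;=\; \varphi_k(t,x) - \varphi_k(0,x) + \tfrac12\eta(x)^2\int_0^t\partial_{xx}\varphi_k(s,x)\,ds.
\]
The It\^o martingale part splits as $-2\int_0^{\cdot}\phi_k(X_s)\,dX_s$, a true martingale by the hypothesis \eqref{opt:martcond}, plus $\int_0^{\cdot}\bigl(\int_0^s\partial_x\varphi_k(u,X_s)\,du\bigr)dX_s$, which is a true martingale thanks to the boundedness of $f$, the resulting uniform bound on $\varphi_k$ and its spatial derivative, and the linear-growth control on $\eta$ from the standing assumptions.

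The key observation is that, by the barrier property of $\R^k$, for each fixed $x$ the slice $\{s:(s,x)\in\R^k\}$ is a half-line $[t^{\ast},\infty)$ with $t^{\ast}:=\ovl{t}_{\R^k}(x)\in[0,\infty]$. On the complement of $\R^k$ the strong Markov property gives $\mathcal{A}\varphi_k=0$, equivalently $\tfrac12\eta(x)^2\partial_{xx}\varphi_k(s,x)=-\partial_s\varphi_k(s,x)$, while on $\R^k$ one has $\varphi_k=\varphi_{k+1}$. Splitting the integral at $t^\ast$ and using these two identities to telescope away the $\varphi_k(0,x)$ and $\varphi_k(t^\ast,x)$ boundary terms yields the transparent identity
\[
\mathcal{A}h_k(t,x) \;=\; \int_{t^{\ast}\wedge t}^{t}\mathcal{A}\varphi_{k+1}(s,x)\,ds.
\]
In particular $\mathcal{A}h_k(t,x)=0$ whenever $(t,x)\notin\R^k$, which is exactly the situation on the stochastic interval $[\sigma_{k-1},\sigma_k)$, and this already delivers the claimed martingale property on $[\sigma_{k-1},\sigma_k]$.

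For the global submartingale property it remains to prove $\mathcal{A}\varphi_{k+1}\ge 0$, which I would establish by backward induction on $k$. The base case is $\varphi_{n+1}(s,x)=f(s)$, so that $\mathcal{A}\varphi_{n+1}=f'\ge 0$ in the distributional sense since $f$ is non-decreasing. For the inductive step, $\varphi_{k+1}$ agrees with $\varphi_{k+2}$ on $\R^{k+1}$ and solves $\mathcal{A}\varphi_{k+1}=0$ on its complement, so the inequality at level $k+2$ transports across without change. Combined with the identity displayed above this yields $\mathcal{A}h_k\ge 0$, hence the submartingale property.

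The main obstacle is the limited regularity of $\varphi_k$: defined through an expected first-hitting functional, it is in general only a weak (viscosity) solution of an obstacle-type equation and need not be $C^{1,2}$ across $\partial\R^k$, so both the application of It\^o's formula and the inequality $\mathcal{A}\varphi_{k+1}\ge 0$ must be interpreted distributionally. I would handle this by the standard mollification procedure: approximate $\varphi_k$ by smooth regularisations, verify all of the identities above in the smooth regime where It\^o applies classically, and pass to the limit using the uniform control supplied by the boundedness of $f$ together with the locally-Lipschitz linear-growth conditions on $\eta$ stated in the paper's standing assumptions.
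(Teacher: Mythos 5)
Your overall strategy---reduce the submartingale property of $h_k(t,X_t)$ to a sign condition on the space-time generator applied to $\varphi_k$, and prove that sign condition by backward induction---is the right structural idea, and in fact is the PDE shadow of the paper's probabilistic argument. But the specific PDE identities you write down contain a genuine gap, and the regularity issues you defer to mollification are more serious than you suggest; the paper's argument is designed precisely to avoid both.

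The gap: in deriving the identity $\mathcal{A}h_k(t,x)=\int_{t^{\ast}\wedge t}^{t}\mathcal{A}\varphi_{k+1}(s,x)\,ds$, after using the free-boundary PDE on $[0,t^{\ast})$ and the relation $\varphi_k=\varphi_{k+1}$ on $\R^k$ to cancel the boundary terms, you are left with $\int_{t^{\ast}}^{t}\tfrac12\eta^2\partial_{xx}\varphi_k(s,x)\,ds$, and to convert this into $\int_{t^{\ast}}^{t}\tfrac12\eta^2\partial_{xx}\varphi_{k+1}(s,x)\,ds$ you need $\partial_{xx}\varphi_k=\partial_{xx}\varphi_{k+1}$ on $\R^k$. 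This does not follow from $\varphi_k=\varphi_{k+1}$ on $\R^k$: the barrier $\R^k$ is the epigraph of the lower semicontinuous function $\ovl t^k$, so the two functions agree only along horizontal time-rays $[\ovl t^k(x),\infty)\times\{x\}$, not on a space-time neighbourhood. Precisely where $\ovl t^k$ has an upward jump, the $x$-derivative measures $D^2\varphi_k(s,\cdot)$ and $D^2\varphi_{k+1}(s,\cdot)$ can have distinct singular parts supported on the boundary of the time-$s$ slice of $\R^k$. The same objection applies to your inductive step for $\mathcal{A}\varphi_{k+1}\ge 0$: "the inequality at level $k+2$ transports across without change" implicitly asserts $\mathcal{A}\varphi_{k+1}=\mathcal{A}\varphi_{k+2}$ on $\R^{k+1}$. (A cleaner version of your drift computation sidesteps the split entirely: a direct cancellation gives $\mathcal{A}h_k(t,x)=\int_0^t\mathcal{A}\varphi_k(s,x)\,ds$, so $\mathcal{A}h_k\ge 0$ reduces to $\mathcal{A}\varphi_k\ge 0$ with no barrier decomposition; but the inductive gap remains.)

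The paper avoids differentiating $\varphi_k$ in $x$ altogether, which is the crucial move. It applies It\^o-Tanaka only to $\kappa_k(x)=\int_0^x\phi_k$, whose second derivative $\kappa_k''=\varphi_k(0,\cdot)\eta^{-2}$ is a continuous function, yielding $h_k(t,X_t)=h_k(0,X_0)+\int_0^t\varphi_k(u,X_t)\,du-2\int_0^t\phi_k(X_u)\,dX_u-\int_0^t\varphi_k(0,X_u)\,du$; no term involves $D^2\varphi_k$. It then establishes the submartingale inequality directly by manipulating conditional expectations, using two change-of-variable inequalities in $u$ that are proved from the probabilistic statement that $\{\varphi_k(t,X_t)\}$ is a submartingale (the exact content of your $\mathcal{A}\varphi_k\ge 0$), and that statement is itself proved by a backward induction that never touches $\partial_{xx}\varphi_k$: since $\sigma_{\R^k}^t\ge\sigma_{\R^k}^r$ for $t\ge r$, one has $\Eps{(s,x)}{\varphi_k(t,X_t)\mid\F_r}=\Eps{(s,x)}{\varphi_{k+1}(\sigma_{\R^k}^t,X_{\sigma_{\R^k}^t})\mid\F_r}\ge\Eps{(s,x)}{\varphi_{k+1}(\sigma_{\R^k}^r,X_{\sigma_{\R^k}^r})\mid\F_r}=\varphi_k(r,X_r)$ using only the optional sampling theorem for the level-$(k+1)$ submartingale. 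If you replace your PDE induction with this probabilistic one, and drop the barrier split in favour of the clean cancellation, your argument recovers the paper's. As written, however, the proof does not close.
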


\begin{proof}
First, applying the It\^o-Tanaka formula to the second term in the definition of $h_k$, we have
$$h_k(t,X_t) = h_k(0,X_0)+ \int_0^t \varphi_k(u,X_t)du - 2\int_0^t \phi_k(X_u)dX_u - \int_0^t \varphi_k(0,X_u)du,\quad t\geq 0. $$
Since $0\leq \varphi(u,x)\leq \|f\|_\infty<\infty$, \eqref{opt:martcond} shows that $h_k(t,X_t)-h_k(0,X_0)$ differs from a martingale by a bounded random variable and in particular is integrable.
We now proceed in two steps.
\\
{\bf 1.} For $0\le s\le t$, using the above decomposition and \eqref{opt:martcond}, we have
 \begin{eqnarray*}
 \e^{\mu_0}_s\big[h_k(t,X_t)\big] - h_k(0,X_0)
 &=&
 \int_0^t \e^{\mu_0}_s\big[\varphi_k(u,X_t)\big]du
 -\int_0^t \e^{\mu_0}_s\big[\varphi_k(0,X_u)\big]du - 2 \int_0^s \phi_k(X_u)dX_u
 \end{eqnarray*}
where $\e^{\mu_0}_s:=\e^{\mu_0}[.|\F_s]$. We shall prove in Step 2 below that
 \begin{eqnarray}
 \e^{\mu_0}_s\big[\varphi_k(u,X_t)\big]
 &\ge&
 \e^{\mu_0}_s\big[\varphi_k\big(u-(t-s),X_s\big)\big]
 ~~\mbox{for}~~
 u\in[t-s,t],
 \label{[t-s,t]}
 \\
  \e^{\mu_0}_s\big[\varphi_k(u,X_t)\big]
 &\ge&
 \e^{\mu_0}_s\big[\varphi_k(0,X_{t-u})\big]
 ~~\mbox{for}~~
 u\in[0,t-s],
 \label{[0,t-s]}
 \end{eqnarray}
and 
 \begin{eqnarray}\label{opt-equality}
 &\mbox{equality holds in \eqref{[t-s,t]}--\eqref{[0,t-s]} if}&
 \sigma_{k-1}\le s\le t\le \sigma_k.
 \end{eqnarray}
Then,
 \begin{eqnarray*}
 \e^{\mu_0}_s\big[h_k(t,X_t)\big]-h_k(0,X_0)
 &\ge&
 \int_0^{t-s} \e^{\mu_0}_s\big[\varphi_k(0,X_{t-u})\big]du
 +\int_{t-s}^t \e^{\mu_0}_s\big[\varphi_k(u-(t-s),X_s)\big]du\\
 &&-\int_0^t \e^{\mu_0}_s\big[\varphi_k(0,X_{u})\big]du - 2 \int_0^s \phi_k(X_u)dX_u
 \\
 &=&
 \int_s^{t} \e^{\mu_0}_s\big[\varphi_k(0,X_u)\big]du
 +\int_0^s \e^{\mu_0}_s\big[\varphi_k(u,X_s)\big]du\\
&& -\int_0^t \e^{\mu_0}_s\big[\varphi_k(0,X_{u})\big]du- 2 \int_0^s \phi_k(X_u)dX_u
 \\
 &=&
 h_k(s,X_s)-h_k(0,X_0)
 \end{eqnarray*}
with equality if $\sigma_{k-1}\le s\le t\le \sigma_k$.
\\
{\bf 2.} (i) We first argue, for all $(s,x)\in\re_+\times\re$, that
 \begin{equation}\label{varphi-submart}
 \big\{\varphi_k\big(t,X_{t}\big)\big\}_{t\ge s} 
 ~\mbox{is a submartingale on $[s,\infty)$, and a martingale on}~
  [s,\sigma_{\R^k}],~ \p^{s,x}-\mbox{a.s.}
 \end{equation} 
The martingale property is immediate from the definition of $\varphi_k$. The submartingale property follows from the following induction. First, the claim is obvious for $k=n+1$ by the fact that $f$ is non-decreasing. Next, suppose that the submartingale property in \eqref{varphi-submart} holds for some $k+1$. Introduce the stopping times $\sigma_{\R^k}^t:= \inf \{u \ge t: (u, X_u) \in \R^{k}\}$, and notice that $\sigma_{\R^k}^t \ge \sigma_{\R^k}^r$ for $s \le r \le t$. Then, denoting by $\tilde X, \tilde\sigma$ independent copies of the same objects, and using the induction hypothesis, we see that:
  \begin{eqnarray*}
    \Eps{(s,x)}{\varphi_k(t,X_t)| \F_r} 
    = 
    \Eps{(s,x)}{\Eps{(t,X_t)}{\varphi_{k+1}(\tilde{\sigma}_{\R^k},\tilde{X}_{\tilde{\sigma}_{\R^k}})}|\F_r}
    &=& 
    \Eps{(s,x)}{\varphi_{k+1}(\sigma_{\R^k}^t,X_{\sigma_{\R^k}^t})|\F_r}
    \\
    &\ge& 
    \Eps{(s,x)}{\varphi_{k+1}(\sigma_{\R^k}^r,X_{\sigma_{\R^k}^r})|\F_r} 
    \\
    &=&
    \varphi_k(r,X_r).
  \end{eqnarray*}
(ii) We now prove \eqref{[t-s,t]}. For $u \ge t-s$, it follows from \eqref{varphi-submart} that
  \begin{eqnarray}
    \e^{\mu_0}_s\big[\varphi_k(u,X_t)\big] 
    \;=\;
    \e^{0,X_s}\big[\varphi_k(u,\tilde{X}_{t-s})\big]
    &=& 
    \e^{u-(t-s),X_s}\big[\varphi_k(u,\tilde{X}_u)\big]
    \nonumber\\
    &=& 
    \e^{u-(t-s),X_s}\big[\varphi_{k+1}(\tilde \sigma_{\R^k}^u,\tilde{X}_{\tilde \sigma_{\R^k}^u})\big]
    \nonumber\\
    &\ge& 
    \e^{u-(t-s),X_s}\big[\varphi_{k+1}(\tilde \sigma_{\R^k},\tilde{X}_{\tilde\sigma_{\R^k}})\big]
    \label{eq:Mk_u_ge_t-s}\\
    &=& \varphi_k\big(u-(t-s),X_s\big),~~\p^{\mu_0}-\mbox{a.s.}
    \nonumber
  \end{eqnarray}
(iii) We next prove \eqref{[0,t-s]}. For $u\le t-s$, using again \eqref{varphi-submart}, we see that:
  \begin{eqnarray}
    \e^{\mu_0}_s\big[\varphi_k(u,X_t)\big] 
    &=& 
    \e^{\mu_0}_s\big[\e^{0,X_{t-u}}\big[\varphi_k(u,\tilde{X}_{u})\big]\big]
    \nonumber\\
    &\ge& 
    \e^{\mu_0}_s\big[\e^{0,X_{t-u}}\big[\varphi_k(0,\tilde{X}_{0})\big]\big]
    \label{eq:Mk_u_le_t-s}\\
    &=& 
    \e^{\mu_0}_s\big[\varphi_k(0,X_{t-u})\big].
    \nonumber
  \end{eqnarray}
(iv) Finally, to prove \eqref{opt-equality}, we observe that the equality was lost in \eqref{[t-s,t]} and \eqref{[0,t-s]} only because of the inequalities in \eqref{eq:Mk_u_ge_t-s} and \eqref{eq:Mk_u_le_t-s}, which in turn become equalities provided that $(u,X_u)$ does not enter $\R^k$ for $u \in [s,t)$. The condition that $\sigma_{k-1} \le s \le t \le \sigma_k$ ensures this is true.
\end{proof}

\begin{proof}[Proof of Theorem \ref{thm:optimal}]
Finally, we complete the proof of the main result in Section \ref{sec:optimality}.
First, by monotone convergence arguments and since $\psi$ is convex, note that 
\begin{equation}\label{eq:2ndmoment}
\Eps{\mu_0}{\rho_n}=\int \psi(x) (\mu_n-\mu_0)(dx)=\int (U^{\mu_0}(x)-U^{\mu_n}(x)) dx
\end{equation}
is the same for all $\rho \in \T(\bmu_n)$ so that adding a constant to $f$ does not change the problem. We shall normalise $f$ by taking $f(0)=0$ and exclude the trivial case $f\equiv 0$. If the quantities in \eqref{eq:2ndmoment} are equal to $+\infty$ then there is nothing to prove. We thus assume that \eqref{eq:2ndmoment} is finite. Note that this might be so even if $\int \psi(x) \mu_i(dx)=\infty$ for each $0\leq i\leq n$. More generally, thanks to the convex ordering of measures, one can define the integral $\int g(x)(\mu_j-\mu_i)(dx)$ for a convex $g$ and $0\leq i\leq j\leq n$. This is done by considering $g_k\nearrow g$ which are convex, equal to $g$ on a compact set and affine on the complement. Further, if $h = h-g + g$ with $(h-g)$ and $g$ convex with finite integrals against $(\mu_j-\mu_i)$ then the integral $\int h(x)(\mu_j-\mu_i)(dx)$ is also well defined and finite, see \cite{BNT:16} for details. We shall use this fact below repeatedly together with $\int \psi(x) (\mu_j-\mu_i)(dx)<\infty$ which follows from \eqref{eq:2ndmoment}. 
%Further, for simplicity, we assume that $\int x^2 \mu_0(dx)<\infty$ and hence also $\int x^2 \mu_n(dx)<\infty$. The general case, when both are infinite but \eqref{eq:2ndmoment} is finite, follows by the same arguments, however we have to always consider $\mu_k-\mu_0$ instead of $\mu_k$ on their own.  \comment{AC: JO to check this.}

  Without loss of generality, we may assume that $f$ is bounded, the
  general case follows from a direct monotone convergence argument. Then $0 \le \varphi_i
  \le\|f\|_\infty$ for all $i$, and in particular, $|\phi_i(x)| \le
  \|f\|_{\infty} |\psi'(x)|$. We define $\kappa_i(x) := \int_0^x \phi_i(y)dy =
  -h_i(0,x)/2$, and observe that $\kappa_i(x)$ is then a non-negative,
  convex function with $0\leq \kappa_i''(x)\leq \|f\|_\infty \eta(x)^{-2}$ so that $\|f\|_\infty \psi(x) - \kappa_i(x)$ is a non-negative convex function. We conclude that $\int \kappa_i(x)(\mu_j-\mu_k)(dx)<\infty$, $0\leq k\leq j\leq n$.
  Moreover, we have $\kappa_i(x) \ge \kappa_{i+1}(x)$ for all $x \in \bI$ since $\varphi_i\geq \varphi_{i+1}$, as argued above. 
  
  The aim is now to take expectations in \eqref{eq:pathwiseineq} for
  $(s_i, x_i) = (\rho_i,X_{\rho_i})$, where $\rho \in \T(\bmu_n)$. To do this, we need to check
  that the expectations under $\p^{\mu_0}$ of individual terms on the right-hand side of
  \eqref{eq:pathwiseineq} are well defined. 
  
  We can rewrite the first two terms on the right-hand side of \eqref{eq:pathwiseineq} as:
  \begin{align*}
    \sum_{i=1}^n \lambda_i(x_i) 
    +h_1(0,x_0) & = \sum_{i=1}^n \int_0^{\ovl t_i(x_i)} \left( \varphi_{i+1}(s,x_i)-\varphi_i(s,x_i)\right)ds + 2 \sum_{i=1}^n \int_{x_{i-1}}^{x_{i}}\phi_{i}(y) dy,
  \end{align*}
where we used that $f(0)=0$ so that $\kappa_{n+1}\equiv 0$ and $h_{n+1}(t,x)=\int_0^t f(u)du$. The expectation of the first two terms is then equal to
  \begin{equation*}
    \sum_{i=1}^n \int \int_0^{\ovl t_i(x)}(\varphi_{i+1}(s,x)-\varphi_i(s,x))ds\,\mu_i(dx) + 2 \sum_{i=1}^n \int \kappa_i(x)(\mu_i-\mu_{i-1})(dx).
\end{equation*}
The integrals in the second sum are well defined and finite by the discussion above. As for the first sum, observe that
$$ 0\leq \int \int_0^{\ovl t_i(x)}(\varphi_{i}(s,x)-\varphi_{i+1}(s,x))ds\,\mu_i(dx)\leq \|f\|_\infty \Eps{\mu_0}{\ovl t_i(X_{\sigma_i})}\leq \|f\|_\infty \Eps{\mu_0}{\sigma_i}<\infty.
$$
Using $|\varphi_i|\leq \|f\|_\infty$, $\Eps{\mu_0}{\rho_n}<\infty$ and integrability properties of $\kappa_i$ we see that the local martingale 
$$\int_0^t \phi_i(X_u)dX_u = \kappa_i(X_t)-\kappa_i(X_0)-\frac{1}{2}\int_0^t \varphi_i(0,X_u)du,$$
is a martingale on $[0,\rho_n]$. It then follows from Lemma \ref{lem:Gsubmart} that
 $$ \Eps{\mu_0}{h_i(\rho_i,X_{\rho_i})-h_i(\rho_{i-1},X_{\rho_{i-1}})}\geq 0, \quad i=1,\ldots, n,$$
with equality if $\rho_i=\sigma_i$. Taking
  expectations under $\p^{\mu_0}$ in \eqref{eq:pathwiseineq}, we deduce that
\begin{equation*}
  \Eps{\mu_0}{\int_0^{\rho_n}f(t)\, dt} \ge \sum_{i=1}^n \int \int_0^{\ovl t_i(x)}(\varphi_{i+1}(s,x)-\varphi_i(s,x))ds\,\mu_i(dx) + 2 \sum_{i=1}^n \int \kappa_i(x)(\mu_i-\mu_{i-1})(dx),
\end{equation*}
with equality when we replace $\rho_n$ with $\sigma_n$.
\end{proof}

\section{Extension to continuous Markov local martingales} \label{ap:MarkovMartingales}
\setcounter{equation}{0}
\renewcommand {\theequation}{B.\arabic{equation}}

The following statement extends Lemma \ref{lem:box} to a class of continuous Markov local martingales.

\begin{lemma}\label{lem:boxMarkov}
Let $X$ be a local martingale with $d\langle X\rangle_t=\eta(X_t)^2dt$, for some locally Lipschitz function $\eta$, and let $a<b$ be fixed points in $\iI$, and $H_{a,b}$ the first exit time of $X$ from the interval $(a,b)$. Then
 \begin{eqnarray*}
 \e^x\big| X_{t\wedge H_{a,b}}-y\big|
 \;=\;
 \e^y\big| X_{t\wedge H_{a,b}}-x\big|
 &\mbox{for all}&
 x,y\in[a,b].
 \end{eqnarray*} 
\end{lemma}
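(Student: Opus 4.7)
The plan is to reduce the identity, via Tanaka's formula, to a reciprocity for expected local times, and then to prove that reciprocity through the symmetry of the generator of $X$ with respect to its speed measure. First, I would apply Tanaka's formula under $\p^x$ to the convex map $z \mapsto |z-y|$ on $[0, t\wedge H_{a,b}]$. Since $X_{\cdot \wedge H_{a,b}}$ is a bounded martingale on a bounded time interval, the resulting stochastic integral is a true martingale, so taking expectations yields
\[
\e^x\big|X_{t\wedge H_{a,b}} - y\big| - |x-y| \;=\; \e^x\big[L^y_{t\wedge H_{a,b}}\big].
\]
Swapping the roles of $x$ and $y$ and subtracting, the target identity reduces to the symmetry
\[
\e^x\big[L^y_{t\wedge H_{a,b}}\big] \;=\; \e^y\big[L^x_{t\wedge H_{a,b}}\big], \qquad x,y\in(a,b)
\]
(the boundary cases where $x$ or $y$ lies in $\{a,b\}$ being trivial, since then the diffusion is absorbed immediately).

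\textbf{Step 2 (Killed density form).} Let $p^{a,b}(s,\cdot,\cdot)$ denote the transition density of $X$ killed on $\{a,b\}$; its existence and continuity on $(0,\infty)\times(a,b)^2$ follow from the standing hypotheses in exactly the manner cited for the unkilled density in the introduction. The analogue of identity \eqref{eq:1} for the killed process gives
\[
\e^x\big[L^y_{t\wedge H_{a,b}}\big] \;=\; \eta(y)^2\int_0^t p^{a,b}(s,x,y)\,ds,
\]
so the claim further reduces to the pointwise reciprocity
\[
\eta(y)^2\,p^{a,b}(s,x,y) \;=\; \eta(x)^2\,p^{a,b}(s,y,x), \qquad s>0,\ x,y\in(a,b).
\]

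\textbf{Step 3 (PDE symmetry).} This is the classical fact that the generator $\tfrac12\eta(\cdot)^2\partial_{xx}$ is symmetric with respect to the speed measure $m(dx)=2\eta(x)^{-2}dx$, which I would verify directly at the PDE level. Set $q(s,x,y):=\eta(y)^2 p^{a,b}(s,x,y)$ and $\tilde q(s,x,y):=\eta(x)^2 p^{a,b}(s,y,x)$. The Kolmogorov backward equation in $x$ gives $\partial_s q = \tfrac12\eta(x)^2\partial_{xx}q$, whereas multiplying the Kolmogorov forward equation satisfied by $p^{a,b}(s,y,\cdot)$ through by $\eta(x)^2$ produces the same equation for $\tilde q$. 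An entirely symmetric argument shows that both $q$ and $\tilde q$ also solve $\partial_s = \tfrac12\eta(y)^2\partial_{yy}$ in $y$. They share vanishing Dirichlet conditions on each boundary $\{a,b\}$, and the initial data coincide as distributions, since $\eta(y)^2\delta_y(x)=\eta(x)^2\delta_x(y)$. Uniqueness for this Cauchy--Dirichlet problem then forces $q\equiv\tilde q$.

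\textbf{Main obstacle.} Under only the local Lipschitz assumption on $\eta$, the density $p^{a,b}$ is continuous but not necessarily smooth enough to support the classical manipulations of Step 3 directly. I would handle this precisely as in the proof of Lemma~\ref{lem:vxi-immediate}: approximate $\eta$ by smooth functions $\eta_n$ via mollification, establish the reciprocity for the corresponding smooth diffusions $X^n$ (for which the PDE computation becomes a routine Green-function identity), and then pass to the limit using the uniform convergence of killed transition densities on compact subsets of $(0,\infty)\times(a,b)^2$ together with the joint continuity of the local times in $(s,x,y)$.
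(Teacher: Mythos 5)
Your proof is correct, but it takes a structurally different route from the paper's appendix argument. The paper shows directly that both $u(t,x):=\e^x\big|X^H_t-y\big|$ and $v(t,x):=\e^y\big|X^H_t-x\big|$ are continuous viscosity solutions of the same parabolic Cauchy--Dirichlet problem $\partial_t w=\tfrac12\eta^2 D^2 w$ on $(a,b)$ with linear boundary data, and concludes by a viscosity comparison principle (Theorem 8.2 of Crandall--Ishii--Lions, after the standard $e^{\lambda t}$ trick); the mollification there is applied to the candidate function $v$ itself, together with stability of viscosity solutions. You instead reduce via Tanaka and the killed analogue of \eqref{eq:1} to the pointwise reciprocity $\eta(y)^2 p^{a,b}(s,x,y)=\eta(x)^2 p^{a,b}(s,y,x)$ -- i.e., symmetry of the generator $\tfrac12\eta^2 D^2$ with respect to the speed measure -- and establish it classically via the backward and forward Kolmogorov equations for smooth $\eta$, passing to the limit by mollifying $\eta$. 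The trade-off is clear: the paper's viscosity formulation never needs the killed density to exist or to be a classical solution of any PDE, which makes the low-regularity case ($\eta$ only locally Lipschitz) structurally painless; your route avoids viscosity solutions entirely and leans on classical Green-function reciprocity, but then must supply existence and suitable convergence of $p^{a,b}_n\to p^{a,b}$ (in practice it is cleaner to pass to the limit one level up, at the expected local times $\e^x[L^y_{t\wedge H_{a,b}}]$ or the potentials themselves, exactly as the paper does in Lemma~\ref{lem:vxi-immediate}, rather than at the densities). Both yield the result; yours is closer in spirit to the local-time reciprocity emphasised in the Brownian coupling proof of Lemma~\ref{lem:box}, while the paper's appendix version is a pure PDE comparison argument.
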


\begin{proof}
Le $y\in(a,b)$ be fixed, and denote $X^H:=X_{.\wedge H_{a,b}}$. We decompose the proof in three steps.
\\
{\it Step 1:} By dominated convergence the function $u(t,x):=\e^x\big| X^H_t-y\big|$ is continuous, and it follows from classical argument using the tower property that $u$ is a viscosity solution of the equation
 \begin{equation}\label{heatequationab}
 \begin{array}{c}
 \big(\partial_t u - \frac12 \eta^2D^2u\big)(t,x)
 =
 0
 ~~\mbox{for}~~
 t\ge 0,~x\in(a,b)
 \\
 u(x,a)=y-a,~~u(x,b)=b-y,~x\in(a,b).
 \end{array}
 \end{equation}
{\it Step 2:} Similarly, the function $v(t,x):=\e^y\big| X^H_t-x\big|$ is a continuous function, and is in addition convex in the $x-$variable. Denote by $L(X^H)$ the local time of the continuous martingale $X^H$. Using the It\^o-Tanaka formula, we see that:
 \begin{eqnarray*}
 v(t+h,x)-v(t,x)
 &=&
 \e^y \big[L^x_{t+h}(X^H)-L^x_{t}(X^H)\big].
  \end{eqnarray*}
By the density occupation formula, this provides for all Borel subset $A$ of $[a,b]$:
 \begin{eqnarray*}
 \int_A \int_t^{t+h} \partial_tv(ds,x)dx
 &=&
 \int_A \big(v(t+h,x)-v(t,x)\big)dx
 \;=\;
 \int_A\eta^2(x)\int_t^{t+h} \mathbb{P}^{X^H_s}(dx)ds,
 \end{eqnarray*}
where $\mathbb{P}^{X^H_s}$ denotes the distribution function of $X^H_s$. Notice that $\mathbb{P}^{X^H_s}=\frac12 D^2v(s,.)$. Then:
 \begin{eqnarray*}
 \int_A \int_t^{t+h} \partial_tv(ds,x)dx
 &=&
 \int_A\int_t^{t+h}\frac12\eta^2(x) ds D^2v(s,dx).
 \end{eqnarray*}
Let $\varphi_\varepsilon$ be a $C^\infty-$molifier, and set $v_\varepsilon(t,x)=\int v(t-s,x-y)\varphi_\varepsilon(s,y)ds dy$. Then, $v_\varepsilon$ is smooth, and it follows from the last equality that
 \begin{eqnarray*}
  \int_A \int_t^{t+h} 
  \big(\partial_tv_\varepsilon-\frac12\eta^2D^2v_\varepsilon-R_\varepsilon\big)(s,x)ds dx
 &=&
 0,
 \end{eqnarray*}
where $R_\varepsilon(s,x):=\int \big(\eta^2(x)-\eta^2(x-y)\big)D^2v(r-s,x-y)\varphi_\varepsilon(r,y)dr dy$. Since $\eta$ is Lipschitz on $[a,b]$, and $v$ is bounded, we see that
 \begin{eqnarray*}
 \big|R_\varepsilon(s,x)\big|
 &\le&
 c\int D^2\{|x-y|\varphi_\varepsilon(r-s,x-y)\}dr dy
 \\
 &=&
 c\int \big[D\{|x-y|\varphi_\varepsilon(r-s,x-y)\}\big]_a^b dr
 \;=:\; r_\varepsilon \;\longrightarrow\; 0,~~\mbox{as}~~\varepsilon \to 0.
 \end{eqnarray*}
By the arbitrariness of $h>0$ and the Borel subset $A$ of $[a,b]$, this shows that 
 \begin{eqnarray*}
 \partial_tv_\varepsilon
 -\frac12\eta^2D^2v_\varepsilon-r_\varepsilon\ge 0
 &\mbox{and}&
 \partial_tv_\varepsilon
 -\frac12\eta^2D^2v_\varepsilon+r_\varepsilon\le 0
 ~~\mbox{on}~~
 \re_+\times (a,b).
 \end{eqnarray*}
Since $v_\varepsilon\longrightarrow v$, locally uniformly, it follows from the stability result of viscosity solutions that $v$ is a viscosity solution of $\partial_tv-\frac12\eta^2D^2v=0$ on $\re_+\times (a,b)$. We also directly see that $v(t,a)=y-a$ and $v(t,b)=b-y$. Hence $v$ is also a viscosity solution of \eqref{heatequationab}.
\\
{\it Step 3:} To conclude that $u=v$, we now use the fact that equation \eqref{heatequationab} has a unique $C^0(\re_+\times[a,b])$ viscosity solution. Indeed the corresponding equation satisfied by $e^{\lambda t} u(t,x)$, for an arbitrary $\lambda>0$, satisfies the conditions of Theorem 8.2 of \cite{CrandallIshiiLions}. 
\end{proof}

\bibliographystyle{apalike}

\bibliography{bib_OSPRoot}

\end{document}